\newtheorem{assumption}[theorem]{Assumption}
\DeclareMathAlphabet{\mathpzc}{OT1}{pzc}{m}{it}
\newcommand{\cEl}{\scalebox{1.27}{$\mathpzc{E}$}}
\newcommand{\cBl}{\scalebox{1.27}{$\mathpzc{B}$}}
\newcommand{\cAl}{\scalebox{1.27}{$\mathpzc{A}$}}
\newcommand{\R}{\mathbb{R}}
\DeclareMathOperator{\curl}{curl}
\DeclareMathOperator{\divg}{div}
\begin{document}

\markboth{R. Chill, T. Reis, T. Stykel}
{Analysis of a quasilinear coupled MQS model.}

%
\catchline{}{}{}{}{}
%

\title{Analysis of a quasilinear coupled magneto-quasistatic model: \\
solvability and regularity of solutions
}

\author{Ralph Chill
}

\address{Institut f\"ur Analysis, Technische Universit\"at Dresden \\
01062 Dresden, Germany \\
ralph.chill@tu-dresden.de}

\author{Timo Reis}

\address{Institut für Mathematik, Technische Universität Ilmenau, Weimarer Str. 32 \\
	98693 Ilmenau, Germany\\
timo.reis@tu-ilmenau.de}

\author{Tatjana Stykel}

\address{Institut f\"ur Mathematik \& Centre for Advanced Analytics and Predictive Sciences (CAAPS), Universit\"at Augsburg, Universit\"atsstr. 12a, 86159 Augsburg, Germany \\
stykel@math.uni-augsburg.de}

\maketitle

\begin{history}
\received{(Day Month Year)}
\revised{(Day Month Year)}
\comby{(xxxxxxxxxx)}
\end{history}

\begin{abstract}
We consider a~quasilinear model arising from dynamical magnetization. This model is described by a~magneto-quasistatic (MQS) approximation of Maxwell's equations. Assuming that the medium consists of a~conducting and a~non-conducting part, the derivative with respect to time is not fully entering, whence the system can be described by an abstract differential-algebraic equation. Furthermore, via magnetic induction, the system is coupled with an equation which contains the induced electrical currents along the associated voltages, which form the input of the system. The aim of this paper is to study well-posedness of the coupled MQS system and regularity of its solutions. Thereby, we rely on the classical theory of gradient systems on Hilbert spaces combined with the
concept of $\mathcal{E}$-subgradients using in particular the magnetic energy. The coupled MQS system precisely fits into this general framework.

\end{abstract}

\keywords{magneto-quasistatic systems;
  eddy current model;
	magnetic energy;
	abstract differential-algebraic equations;
  gradient systems.}

\ccode{AMS Subject Classification:
12H20, 
34A09, 
35B65, 
37L05, 
78A30, 
}

\section{Introduction}\label{sec:intro}

Maxwell's equations play a~fundamental role in modeling and numerical analysis of electromagnetic field problems. They describe the dynamic and spatial behavior of the electromagnetic field in a~medium. These equations were discovered in the early 1860s and have since then received a~lot of attention by mathematicians, physicists and engineers \cite{Ja99}. The unknown variables are given by the $\R^3$-valued functions
\[\begin{aligned}
\bm{D}:&\text{ electric~displacement},&\bm{B}:& \text{ magnetic flux intensity},\\
\bm{E}:&\text{ electric~field intensity},&\bm{H}:&\text{ magnetic field intensity},\\
\bm{J}:&\text{ electric~current density},
\end{aligned}\]
which depend on a~spatial variable $\xi\in\mathit{\Omega}\subseteq\R^3$ and
time $t\in[0,T]\subset\mathbb{R}$. Assuming that there are no electric charges, Maxwell's equations are given by
\begin{align*}
\qquad\qquad\nabla\cdot \bm{D}&=0&&\qquad\qquad\text{(the medium contains no electric charges)},\\
\nabla\cdot \bm{B}&=0&&\qquad\qquad\text{(field lines of the magnetic~flux are closed)},\\
\nabla\times \bm{E}&=-{\textstyle\frac{\partial}{\partial t}}\bm{B}&&\qquad\qquad\text{(Faraday's law of induction)},\\
\nabla\times \bm{H}&=\bm{J}+{\textstyle\frac{\partial}{\partial t}}\bm{D} &&\qquad\qquad\text{(magnetic~flux law)},
\end{align*}
where $\nabla\cdot$ stands for the divergence and $\nabla\times$ denotes the curl of a~vector field.
In addition, the above variables fulfill {\em constitutive relations}, which are determined by the physical properties of the medium. Denoting the Euclidean norm by $\|\cdot\|_2$, the constitutive relations are, in the quasilinear and isotropic case, of the form
\begin{align*}
&\bm{D}(\xi,t)=\epsilon(\xi , \|\bm{E}(\xi,t)\|_2 )\bm{E}(\xi,t),\\
&\bm{H}(\xi,t)=\nu(\xi , \|\bm{B}(\xi,t)\|_2 ) \bm{B}(\xi,t),\\
&\bm{J}(\xi,t)=\sigma(\xi , \|\bm{E}(\xi,t)\|_2 ) \bm{E}(\xi,t)+\bm{J}_{\rm ext}(\xi,t)
\end{align*}
for some functions $\epsilon$, $\nu$, $\sigma:\mathit{\Omega}\times\R\to\R$ which respectively express the electric permittivity, magnetic reluctivity and electric conductivity of the material, and $\bm{J}_{\rm ext}$ stands for the externally injected currents.

In this paper, we consider a~problem where
the displacement currents $\tfrac{\partial}{\partial t}\bm{D}$ are negligible compared to the conduction currents, and therefore they can be omitted. We also assume that the conductivity is linear, that is, $\sigma(\xi):=\sigma(\xi , \|\bm{E}(\xi,t)\|_2 )$ does not depend on $\bm{E}$.
Further, under some additional topological conditions on $\mathit{\Omega}$, the fact that the magnetic flux intensity is divergence-free implies that we can make the ansatz $\bm{B}=\nabla\times\bm{A}$
for some function $\bm{A}$, which is called the {\em magnetic vector potential}.
Plugging this into Faraday's law of induction, we obtain
\mbox{$\nabla\times \bm{E}=-{\textstyle\frac{\partial}{\partial t}}\nabla\times\bm{A}$}
whence $\bm{A}$ can be chosen in a~way that
$\bm{E}=-{\textstyle\frac{\partial}{\partial t}}\bm{A}$.
Finally, inserting the constitutive relations for $\bm{H}$ and $\bm{J}$
into the magnetic flux law and using the derived representations for $\bm{B}$ and $\bm{E}$ in terms of $\bm{A}$,
 we obtain the so-called {\em magneto-quasistatic} (MQS) {\em approximation of Maxwell's equations} (also called {\em eddy current model}) given by
\begin{equation}
\tfrac{\partial}{\partial t}\left(\sigma\bm{A}\right) + \nabla \times \left(\nu(\cdot,\|\nabla \times \bm{A}\|_2)
\nabla \times \bm{A}\right)=\bm{J}_{\rm ext}\quad \text{ in } \mathit{\Omega}\times (0,T],
\label{eq:MQS01}
\end{equation}
see \cite{IdaBastos97,BdGCS18}.
Such equations are used, for example, in the modeling
of accelerator magnets, electric machines and transformers operating at low frequencies.
If a~part of the medium is non-conducting, then the function $\sigma$ vanishes on some subset of~$\mathit{\Omega}$. In this case, the MQS equation \eqref{eq:MQS01} becomes of degenerate parabolic or mixed parabolic-elliptic type.

The coupling of electromagnetic devices to an~external circuit can be realized as a~solid conductor model
or as a~stranded conductor model,
see \cite{SchoepsDGW13} for details. Here, we restrict ourselves to the stranded conductor model
where the external current is induced by $m$~windings
\begin{equation}
\bm{J}_{\rm ext}(\xi,t)=\chi(\xi) \bm{i}(t),\label{eq:MQS02}
\end{equation}
where $\bm{i}$ is the $\R^m$-valued current function, and $\chi$
is the $\mathbb{R}^{3\times m}$-valued winding density function which expresses the geometry of the windings. The windings are assumed to have an~internal resistance $R\in\R^{m\times m}$ to which $m$ time-dependent voltages are applied, where the latter is expressed by the $\R^m$-valued prescribed function $\bm{v}$. Further, by using the fact that the electric field induces another~voltage $\int_\mathit{\Omega} \chi^\top\bm{E} \, {\rm d}\xi$ along the windings, the relation
$\bm{E}=-{\textstyle\frac{\partial}{\partial t}}\bm{A}$
together with Kirchhoff's voltage law gives rise to
\begin{equation}
\tfrac{{\rm d}}{{\rm d} t} \int_\mathit{\Omega} \chi^\top\bm{A} \, {\rm d}\xi + R\, \bm{i} = \;\bm{v}\quad \text{ on } (0,T].
\label{eq:MQS03}
\end{equation}
Altogether, we obtain the quasilinear partial integro-differential-algebraic equations with unknown functions $\bm{A}$ and $\bm{i}$. These equations
are further equipped with some initial and boundary conditions which are specified in Section~\ref{sec:solution}.

Existence, uniqueness and regularity results for the linear MQS system \eqref{eq:MQS01} with the coupling relations \eqref{eq:MQS02} and \eqref{eq:MQS03} are - under some additional topological conditions on the conducting domain - presented in \cite{NicT14}. They are based on a~theo\-rem by Showalter on degenerate linear parabolic equations \cite{Show77}.
Quasilinear elliptic and non-degenerate parabolic equations in MQS field problems, that is, \eqref{eq:MQS01} and~\eqref{eq:MQS02} with a~prescribed function $\bm{i}$ and bounded and strictly positive mapping~$\sigma$,
have been studied in the context of optimal control in \cite{You13} and \cite{NicT17}, respectively.
The solvability of linear de\-ge\-ne\-ra\-te MQS equations have been investigated in \cite{ArnH12} by deriving a~unified variational formulation and in \cite{PauPTW21} by using the theory of evolutionary equations. Further,
a~comprehensive analysis of quasilinear MQS problems based on a~Schur complement approach has been provided
in \cite{BLS05}. However, the extension of these results to MQS systems with the coupling relation
has remained a~challenging problem which requires considerable care due to structurally different properties of the solution on conducting and non-conducting subdomains, and the additional integral constraint. In \cite{PauPic17}, a~convergence analysis of solutions of linear Maxwell equations to the MQS model is performed while taking the limit of the electric permittivity to zero.

In this paper, we follow a different approach
to analyze the existence and uniqueness of solutions of the general coupled MQS model  \eqref{eq:MQS01}--\eqref{eq:MQS03}. It relies on a~formulation of this model as an abstract differential-algebraic equation involving the subgradient of the magnetic energy. This novel approach gives rise not only to well-posedness but further allows us to prove regularity results for the solutions which are new even in the linear case. Hereby, the well-established theory of gradient systems involving subgradients of convex functions \cite{Bre73,Bar10} forms the basis for our generalization to the differential-algebraic case, which is subsequently applied to the coupled quasilinear MQS model.

The paper is organized as follows. Section~\ref{ssec:spaces} contains a~brief overview on the notation and function spaces used in the subsequent analysis. In Section~\ref{ssec:mqs}, we present the MQS model problem and state assumptions on geometry and material parameters. In Section~\ref{sec:solution}, we define the solution concept and prove the uniqueness result. In Section~\ref{sec:energy}, we introduce the magnetic energy and examine its essential properties. Section~\ref{sec:gradsys} and Section~\ref{sec:MQSsolv} contain our main results. First, we present some operator-theoretic results on a~class of abstract differential-algebraic systems involving subgradients. Thereafter, we show that the coupled nonlinear MQS system fits into this framework, which allows us to establish the existence and regularity properties of solutions to this model.

\section{Notations and Function Spaces}
\label{ssec:spaces}

Let $\mathbb{R}_{\ge0}$ denote the set of all nonne\-gative
real numbers, and $\R^{m\times n}$ the set of real matrices of size $m\times n$.
Further, $x\cdot y$ and $x\times y$ stand, respectively, for the Euclidean inner product and the cross product of $x,y\in\R^3$, and $\|x\|_2$ is the Euclidean norm of $x\in\R^3$. For a~function \mbox{$\bm{A}:\Omega\to\R^3$}, the expression $\|\bm{A}\|_2$ stands for the scalar-valued function $\xi\mapsto\|\bm{A}(\xi)\|_2$. The restriction of a~function $f$ to a~subset $S$ of its domain is denoted by~$\left.f\right|_S$.

The inner product on a Hilbert space $H$ is denoted by $\left\langle\cdot,\cdot\right\langle_H$, and the induced norm is denoted by $\|\cdot\|_H$.
The duality pairing between a Hilbert space $H$ and its dual space $H'$ is denoted by $\langle\cdot,\cdot\rangle$. Note that, throughout this paper, all spaces are assumed to be real.

The set of linear, bounded operators between two Banach spaces $X$ and $Y$ is denoted by $\mathcal{L}(X,Y)$ and, in the case $X=Y$, simply by $\mathcal{L}(X)$.
In the case when $X$ and $Y$ are Hilbert spaces, $A^*\in\mathcal{L}(Y,X)$ stands for the adjoint of $A$. Moreover,
${M^\top}\in\R^{n\times m}$ denotes the transpose of a matrix $M\in\R^{m\times n}$.

Lebesgue and first order Sobolev spaces of functions defined on a domain \mbox{$\mathit{\Omega}\subseteq\R^m$} and with values in a Banach space $X$ are denoted by $L^{p}(\mathit{\Omega};X)$, $W^{1,p}(\mathit{\Omega};X)$ and $H^{1}(\mathit{\Omega};X)$, respectively. We shortly write $L^{p}(\mathit{\Omega})$, $W^{1,p}(\mathit{\Omega})$ and $H^{1}(\mathit{\Omega})$ when $X=\R$. Especially, when $\mathit{\Omega} = \mathbb{I}$ is an interval, we also consider the space $L^p_{\rm loc} (\mathbb{I} ;X)$ which consists of all (equivalence classes of) functions $f:\mathbb{I}\to X$ such that $f\in L^{p}(\mathbb{K};X)$ for all compact intervals $\mathbb{K}\subseteq \mathbb{I}$. Similarly, one defines $W^{1,{p}}_{\rm loc}(\mathbb{I};X)$ and $H^{1}_{\rm loc}(\mathbb{I};X)$. The integrals of Banach space valued functions are to be understood in the Bochner sense. Writing $f\in C(\mathbb{I};X)$ for some measurable function $f:\mathbb{I}\to X$ means that there is a~representative in the equivalence class of $f$ which is continuous on $\mathbb{I}$. In this case, we use the notation $f(s+) := \lim\limits_{t\to s+} f(t)$, where the limit on the right-hand side is taken by using the continuous representative.

Let $\mathit{\Omega}\subseteq\R^3$ be an open domain. The weak (distributional) gradient of $\phi\in L^{{2}}(\mathit{\Omega})$ is denoted by $\nabla \phi$, and $\nabla\times \bm{A}$ stands for the weak curl of a vector field $\bm{A}\in L^{{2}}(\mathit{\Omega};\mathbb{R}^3)$. We consider the Sobolev space
\[\begin{aligned}
H(\curl,\mathit{\Omega}) & = \, \bigl\{\,\bm{A}\in L^2(\mathit{\Omega};\mathbb{R}^3)\enskip:\enskip\nabla\times\bm{A}\in L^2(\mathit{\Omega};\mathbb{R}^3)\,\bigr\},
\end{aligned}\]
which is a Hilbert space endowed with the inner product
$$
 \langle\bm{A},\bm{F} \rangle_{H(\curl,\mathit{\Omega})}  = \langle\bm{A},\bm{F} \rangle_{L^2(\mathit{\Omega};\mathbb{R}^3)}+
 \langle\nabla\times\bm{A},\nabla\times\bm{F} \rangle_{L^2(\mathit{\Omega};\mathbb{R}^3)}.
$$
If $\mathit{\Omega}\subset\R^3$ is bounded and has a~Lipschitz boundary $\partial\mathit{\Omega}$, then for almost any $\xi\in\partial\mathit{\Omega}$, there exists the outward unit normal vector $\bm{n}_o(\xi)\in\R^3$ of $\mathit{\Omega}$ in $\xi$. Here, ``almost any'' refers to the hypersurface Lebesgue measure in $\R^3$. It has been proven in \linebreak \cite[Theorem~I.2.11]{GiraRavi86} that any
$\bm{A}\in H(\curl,\mathit{\Omega})$ has a~well-defined tangential trace
$\bm{A}\times \bm{n}_o\in L^2(\partial\mathit{\Omega};\mathbb{R}^3)$.
This allows us to define the space
\[\begin{aligned}
H_0(\curl,\mathit{\Omega}) = &\, \bigl\{\,\bm{A}\in H(\curl,\mathit{\Omega})\enskip:\enskip \bm{A}\times \bm{n}_o = 0 \text{ on } \partial\mathit{\Omega} \,\bigr\},
\end{aligned}\]
which is a~closed subspace of {$H(\curl,\mathit{\Omega})$}.
It has also been proven in \cite[Theorem~I.2.11]{GiraRavi86} that for all $\bm{A}\in H_0(\curl,\mathit{\Omega})$ and $\bm{F}\in H(\curl,\mathit{\Omega})$,
\begin{equation}
\langle\nabla\times\bm{A},\bm{F} \rangle_{L^2(\mathit{\Omega};\mathbb{R}^3)}=\langle\bm{A},\nabla\times\bm{F}
\rangle_{L^2(\mathit{\Omega};\mathbb{R}^3)}.
\label{eq:curladj}
\end{equation}
This relation is an extension of the formula of integration by parts to the weak curl operator.

The space of di\-ver\-gen\-ce-\-free and square integrable functions is defined as
\begin{multline}
L^2(\divg\!=\!0,\mathit{\Omega};\mathbb{R}^3) \\
 = \bigl\{\bm{A}\in L^2(\mathit{\Omega};\R^3) \,\, :
\,\, \langle \bm{A},\nabla \psi\rangle_{L^2(\mathit{\Omega};\mathbb{R}^3)}=0
\text{ for all }\psi\in H^1_0 (\mathit{\Omega}) \,\bigr\} .
\label{eq:graddivorth}
\end{multline}
{It} is a~closed subspace of $L^{2}(\mathit{\Omega};\mathbb{R}^3)$ and, therefore, a~Hilbert space with respect to the standard inner product in
$L^{2}(\mathit{\Omega};\mathbb{R}^3)$. Recall here, that as usual the Sobolev space $H^1_0 (\Omega )$ is the closure of the space of test functions in $H^1 (\Omega )$.

\section{Model Problem and Assumptions}
\label{ssec:mqs}

In this section, we consider the coupled MQS system as motivated in Section~\ref{sec:intro} in more detail.
 We start with the introduction of the model, and, thereafter, we collect the assumptions on the spatial domain and the system parameters.

\subsection{The coupled MQS model}
Let $\mathit{\Omega}\subset \mathbb{R}^3$ be a~bounded domain
 with boundary $\partial \mathit{\Omega}$ and let $T>0$. We consider the~coupled MQS system in magnetic vector potential formulation
\begin{subequations}\label{eq:MQS}
\begin{align}
\tfrac{\partial}{{\partial}t}\left(\sigma\bm{A}\right) + \nabla \times \left(\nu(\cdot,\|\nabla \times \bm{A}\|_2)
\nabla \times \bm{A}\right)  = & \; \chi\,\bm{i} & \text{ in } &\mathit{\Omega}\times (0,T], \label{eq:MQS1} \\
\tfrac{{\rm d}}{{\rm d} t} \int_\mathit{\Omega} \chi^\top\bm{A} \, {\rm d}\xi + R\, \bm{i} = &\;\bm{v}\label{eq:MQScoupl} & \text{ on } &(0,T], \\
\bm{A}\times \bm{n}_o  = &\; 0 & \mbox{in }& \partial \mathit{\Omega}\times (0,T],
\label{eq:MQSbc}\\[2mm]
\sigma\bm{A}(\cdot,0) = &\; \sigma\bm{A}_0 &\text{ in }&\mathit{\Omega},
\label{eq:MQSic1}\\
\int_\mathit{\Omega} \chi^\top\bm{A}(\cdot,0) \, {\rm d}\xi=&\, \int_\mathit{\Omega} \chi^\top\bm{A}_0\,
{\rm d}\xi,&& \label{eq:MQSic2}
\end{align}
\end{subequations}
where $\bm{A}:\mathit{\Omega} \times [0,T]\to\mathbb{R}^3$ is the magnetic vector potential,
$\nu:\mathit{\Omega}\times\mathbb{R}_{\ge0}\to\mathbb{R}_{\ge0}$ is the magnetic reluctivity,
$\sigma:\mathit{\Omega}\to\mathbb{R}_{\ge0}$ is the electric conductivity,
$\bm{v}:[0,T]\to\mathbb{R}^m$ and $\bm{i}:[0,T]\to\mathbb{R}^m$  are, respectively, the voltage and the electrical current through the electromagnetic conductive contacts. Furthermore,
$\chi:\mathit{\Omega}\to\mathbb{R}^{3\times m}$ is the winding function,
$R\in\mathbb{R}^{m\times m}$ is the resistance of the winding,
and $\bm{A}_0: \mathit{\Omega}\to\mathbb{R}^3$
is the initial value for the magnetic vector potential.
The boundary condition \eqref{eq:MQSbc} implies that the magnetic flux through the boundary $\partial \mathit{\Omega}$ is zero. Moreover, equations
 \eqref{eq:MQSic1} and \eqref{eq:MQSic2} describe the initial conditions for the magnetic vector potential.
Note that we only initialize the parts of $\bm{A}$ whose derivatives occur in \eqref{eq:MQS1} and \eqref{eq:MQScoupl}.
The coupled MQS system \eqref{eq:MQS} can be considered as a~control system, where the voltage~$\bm{v}$ takes the role of the input and $(\bm{A},\bm{i})$ is the state.

\subsection{The spatial domain}
This subsection contains the assumptions on the spatial domain $\mathit{\Omega}$ which are made throughout this paper.

\begin{assumption}[Spatial domain, geometry and topology]
\label{ass:omega}
{\em The set \mbox{$\mathit{\Omega}\subset\mathbb{R}^3$} is a~simply connected bounded Lipschitz domain, which is decomposed into two Lipschitz regular, open subsets \mbox{$\mathit{\Omega}_{C}$, $\mathit{\Omega}_{I}\subset \mathit{\Omega}$}, called, respectively, {\em conducting} and {\em non-conducting subdomains}, such that
$\overline{\mathit{\Omega}}_{C}\subset {\mathit{\Omega}}$ and
$\mathit{\Omega}_{I}=\mathit{\Omega}\setminus \overline{\mathit{\Omega}}_C$. Furthermore, the subdomain $\mathit{\Omega}_C$ is connected, and $\mathit{\Omega}_{I}$ has finitely many connected components $\mathit{\Omega}_{I,1},\ldots,\mathit{\Omega}_{I,q}$ and $\mathit{\Omega}_{I,{\rm ext}}$, where \vspace*{-1mm}
\begin{romanlist}[a)]
\item each of the sets $\mathit{\Omega}_{I,1},\ldots,\mathit{\Omega}_{I,q}$ has exactly one boundary component, these are denoted by $\Gamma_1,\ldots,\Gamma_q$ and called the {\em internal interfaces};
\item the {\em external non-conducting subdomain} $\mathit{\Omega}_{I,{\rm ext}}$ has two boundary components $\partial\mathit{\Omega}$ and the {\em external interface} $\Gamma_{{\rm ext}}:=\overline{\mathit{\Omega}}_{I,{\rm ext}}\cap \overline{\mathit{\Omega}}_{C}$.
\end{romanlist}
}
\end{assumption}

Note that by a~{\em boundary component} of a~subdomain $\mathit{\Omega}_*\subseteq\mathit{\Omega}$, we mean a~connected component of its boundary $\partial\mathit{\Omega}_*$.
Since $\mathit{\Omega}_{I}$ has a~Lipschitz boundary, the closed connected components
$\overline{\mathit{\Omega}}_{I,1},\ldots,\overline{\mathit{\Omega}}_{I,q}$ and $\overline{\mathit{\Omega}}_{I,{\rm ext}}$ are disjoint.
The subdomains $\mathit{\Omega}_{I,1},\ldots,\mathit{\Omega}_{I,q}$ can be interpreted as ``interior cavities'' of the conducting subdomain $\mathit{\Omega}_C$. In particular,  we do not assume that the conducting subdomain  is simply connected, it may also have some ``handles''. Note that the lapidarily formulated notions of ``interior cavities'' and ``handles'' can be made mathematically precise in terms of the so-called {\em Betti numbers}~\cite{Ros19}.
We later assume that the electric conductivity is a~scalar multiple of the indicator function on the conducting subdomain, which justifies the above naming.

\subsection{The space $X(\mathit{\Omega},\mathit{\Omega}_C)$, the initial conditions and the winding function}

Next, we present a~space in which the solutions of the coupled MQS system \eqref{eq:MQS} evolve. As a~preliminary thought, note that equation~\eqref{eq:MQS1} does not change, if we replace $\bm{A}$ by $\bm{A} + \nabla \psi$ for an arbitrary but fixed $\psi \in H^1(\mathit{\Omega})$ which is constant on each boundary component $\Gamma_1,\ldots,\Gamma_q,\Gamma_{\rm ext}$ and $\partial\mathit{\Omega}$.
Therefore, we restrict our considerations to solutions which are pointwise orthogonal to all gradient fields of functions being constant on each set $\Gamma_1,\ldots,\Gamma_q,\Gamma_{\rm ext}$ and $\partial\mathit{\Omega}$. Since the conducting and non-conducting subdomains are both Lipschitz regular, the trace theorem \cite[Theorem~1.39]{Yagi10} yields that the following space is well-defined:
\begin{align}
\label{eq:statespaceorth}
G(\mathit{\Omega},\mathit{\Omega}_C)
=\Bigl\{ \nabla \psi \Bigr. \enskip:\enskip
    &\psi\in H^1_0(\mathit{\Omega})\text{ s.t.\ } \exists\, c_1,\ldots,c_q,c_{\rm ext}\in\R\text{ with }\\
    &\!\left.\psi\right|_{\Gamma_i} \equiv c_i \text{ for }
		i=1,\ldots,q,\;  \Bigl. \left.\psi\right|_{\Gamma_{\rm ext}}\! \equiv c_{\rm ext} \Bigr\}. \!\!\!\!\!\!\nonumber
\end{align}
We are seeking for solutions with values in the orthogonal space of $G(\mathit{\Omega},\mathit{\Omega}_C)$ in $L^2(\mathit{\Omega};\mathbb{R}^3)$, that is, in the space
\begin{equation}
X(\mathit{\Omega},\mathit{\Omega}_C)=\left\{\bm{A} \in L^2(\mathit{\Omega};\mathbb{R}^3)\enskip:\enskip
\langle\bm{A}, \bm{F} \rangle_{L^2(\mathit{\Omega};\mathbb{R}^3)} = 0 \;\text{ for all } \bm{F}\in G(\mathit{\Omega},\mathit{\Omega}_C)\right\},
\label{eq:statespace}
\end{equation}
which is a~Hilbert space when it is equipped with the inner product $\langle \cdot , \cdot \rangle_{L^2(\mathit{\Omega};\mathbb{R}^3)}$.
We further consider the space
\begin{equation}
X_0(\curl,\mathit{\Omega},\mathit{\Omega}_C)=H_0(\curl,\mathit{\Omega})\cap X(\mathit{\Omega},\mathit{\Omega}_C),
\label{eq:statespace2}
\end{equation}
which is again a Hilbert space, now provided with the inner product in $H_0(\curl,\mathit{\Omega})$.
The space $X(\mathit{\Omega},\mathit{\Omega}_C)$ enables us to formulate our assumption on the initial magnetic vector potential and the winding function.

\begin{assumption}[Initial magnetic vector potential and winding function]\label{ass:init}\!\!
\vspace*{-6mm}
{\em \begin{romanlist}[a)]
\item\label{ass:initial}
The initial magnetic vector potential $\bm{A}_0:\mathit{\Omega}\to\mathbb{R}^3$ belongs to
$X(\mathit{\Omega},\mathit{\Omega}_C)$.
\item\label{ass:winding}
The columns of the winding function $\chi:\mathit{\Omega}\to\mathbb{R}^{3\times m}$,
denoted by $\chi_1,\ldots,\chi_m$,
belong to $X(\mathit{\Omega},\mathit{\Omega}_C)$.
\end{romanlist}
}
\end{assumption}
Note that
\[
 \bigl\{ \nabla\psi\; :\; \psi \in H^1_0(\mathit{\Omega}_C \cup \mathit{\Omega}_I ) \bigr\}
\subseteq G(\mathit{\Omega},\mathit{\Omega}_C) \subseteq \bigl\{ \nabla\psi\; :\; \psi\in H^1_0(\mathit{\Omega} ) \bigr\} ,
\]
Therefore, by using \eqref{eq:graddivorth}, we obtain
\begin{equation}\label{eq:divfree}
 L^2(\divg\!=\!0, \mathit{\Omega};\R^3) \subseteq X(\mathit{\Omega},\mathit{\Omega}_C) \subseteq
L^2(\divg\!=\!0, \mathit{\Omega}_C \cup \mathit{\Omega}_I ;\R^3) .
\end{equation}
In particular, the first inclusion in \eqref{eq:divfree} implies that
Assumption~\ref{ass:init}~\ref{ass:winding}) on the winding function $\chi$ is fulfilled, if the columns of~$\chi$ belong to $L^2(\divg\!=\!0, \mathit{\Omega};\R^3)$.
In practice, the current is  often injected through the contacts in the non-conducting subdomain $\mathit{\Omega}_I$, that is, $\mbox{supp}(\chi) \subset\mathit{\Omega}_I$. In this case, $\chi_1,\ldots,\chi_m\in X(\mathit{\Omega},\mathit{\Omega}_C)$ is even equivalent to
$\chi_1,\ldots,\chi_m\in L^2(\divg\!=\!0,\mathit{\Omega};\R^3)$.

Further, note that any $\bm{A}\in X(\mathit{\Omega},\mathit{\Omega}_C)$ is indeed divergence-free on the conducting subdomain $\mathit{\Omega}_C$ as well as on the non-conducting subdomain $\mathit{\Omega}_I$. Since the curl of a~function is divergence-free, this yields
\begin{equation*}
\nabla\times \bm{A}\in X(\mathit{\Omega},\mathit{\Omega}_C)\qquad \text{for all }
	\bm{A}\in H_0(\curl,\mathit{\Omega}).
\label{eq:curlsubset}
\end{equation*}
In the following, we collect some further properties of the space $X(\mathit{\Omega},\mathit{\Omega}_C)$.
The subsequent lemma establishes that this space is closed with respect to multiplication by the indicator function of the conducting subdomain $\mathit{\Omega}_C$.

\begin{lemma}\label{lem:sigmamult}
Let $\mathit{\Omega}\subset\mathbb{R}^3$ and $\mathit{\Omega}_C$, $\mathit{\Omega}_I\subseteq\mathit{\Omega}$ be as in Assumption~\textup{\ref{ass:omega}}, and let $\mathbbm{1}_{\mathit{\Omega}_C}:\mathit{\Omega}\to\R$ be the indicator function of the set $\mathit{\Omega}_C$, that is,
\begin{equation*}\label{eq:indfun}
\mathbbm{1}_{\mathit{\Omega}_C}(\xi)=\begin{cases}1, &\,\xi\in\mathit{\Omega}_C,\\0,&\,\xi\notin\mathit{\Omega}_C .\end{cases}
\end{equation*}
Then, for every $\bm{A}\in X(\mathit{\Omega},\mathit{\Omega}_C)$, one has $\mathbbm{1}_{\mathit{\Omega}_C} \bm{A}\in X(\mathit{\Omega},\mathit{\Omega}_C)$.
\end{lemma}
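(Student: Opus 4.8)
The plan is to test the field $\mathbbm{1}_{\mathit{\Omega}_C}\bm{A}$ against an arbitrary generator $\nabla\psi$ of $G(\mathit{\Omega},\mathit{\Omega}_C)$ and to show that the pairing vanishes, by splitting $\mathit{\Omega}$ into $\mathit{\Omega}_C$ and the connected components of $\mathit{\Omega}_I$ and replacing the ``cut-off'' gradients $\mathbbm{1}_{\mathit{\Omega}_{I,j}}\nabla\psi$ and $\mathbbm{1}_{\mathit{\Omega}_{I,{\rm ext}}}\nabla\psi$ by genuine elements of $G(\mathit{\Omega},\mathit{\Omega}_C)$. So let $\bm{A}\in X(\mathit{\Omega},\mathit{\Omega}_C)$ and $\nabla\psi\in G(\mathit{\Omega},\mathit{\Omega}_C)$ with constants $c_1,\dots,c_q,c_{\rm ext}$ as in \eqref{eq:statespaceorth}. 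Since the interfaces and $\partial\mathit{\Omega}$ are Lipschitz, hence $\mathbb{R}^3$-null, the open sets $\mathit{\Omega}_C,\mathit{\Omega}_{I,1},\dots,\mathit{\Omega}_{I,q},\mathit{\Omega}_{I,{\rm ext}}$ partition $\mathit{\Omega}$ up to a null set, and I would write
\begin{equation*}
\langle\mathbbm{1}_{\mathit{\Omega}_C}\bm{A},\nabla\psi\rangle_{L^2(\mathit{\Omega};\mathbb{R}^3)}=\int_{\mathit{\Omega}}\bm{A}\cdot\nabla\psi\,{\rm d}\xi-\sum_{j=1}^q\int_{\mathit{\Omega}_{I,j}}\bm{A}\cdot\nabla\psi\,{\rm d}\xi-\int_{\mathit{\Omega}_{I,{\rm ext}}}\bm{A}\cdot\nabla\psi\,{\rm d}\xi ,
\end{equation*}
where the first summand is $0$ by definition of $X(\mathit{\Omega},\mathit{\Omega}_C)$ and $G(\mathit{\Omega},\mathit{\Omega}_C)$.

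For a cavity $\mathit{\Omega}_{I,j}$ ($1\le j\le q$), whose boundary is the single internal interface $\Gamma_j$, the function $\psi|_{\mathit{\Omega}_{I,j}}-c_j$ has vanishing trace on $\Gamma_j$, hence lies in $H^1_0(\mathit{\Omega}_{I,j})$; extending it by zero to $\mathit{\Omega}$ (as the $H^1(\mathit{\Omega})$-limit of zero-extended test functions on $\mathit{\Omega}_{I,j}$) produces $\psi_j\in H^1_0(\mathit{\Omega})$ that is constant, equal to $0$, on each of $\Gamma_1,\dots,\Gamma_q,\Gamma_{\rm ext}$, so $\nabla\psi_j\in G(\mathit{\Omega},\mathit{\Omega}_C)$, and $\nabla\psi_j=\mathbbm{1}_{\mathit{\Omega}_{I,j}}\nabla\psi$ a.e. Hence $\int_{\mathit{\Omega}_{I,j}}\bm{A}\cdot\nabla\psi\,{\rm d}\xi=\langle\bm{A},\nabla\psi_j\rangle_{L^2(\mathit{\Omega};\mathbb{R}^3)}=0$.

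The component $\mathit{\Omega}_{I,{\rm ext}}$ is the one delicate case, because it has the two boundary components $\partial\mathit{\Omega}$ (trace of $\psi$ equal to $0$) and $\Gamma_{\rm ext}$ (trace of $\psi$ equal to $c_{\rm ext}$), so no single subtracted constant makes $\psi$ vanish on all of $\partial\mathit{\Omega}_{I,{\rm ext}}$. Here I would instead extend by the constant $c_{\rm ext}$: set $\hat\psi:=\psi$ on $\mathit{\Omega}_{I,{\rm ext}}$ and $\hat\psi:=c_{\rm ext}$ on $\mathit{\Omega}\setminus\overline{\mathit{\Omega}}_{I,{\rm ext}}$. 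The one-sided traces of $\hat\psi$ on $\Gamma_{\rm ext}$ agree (both equal $c_{\rm ext}$) and near each $\Gamma_j$ the function $\hat\psi$ is locally constant, so by the trace/gluing properties of Lipschitz subdomains (cf.\ \cite[Theorem~1.39]{Yagi10}) $\hat\psi\in H^1(\mathit{\Omega})$; since $\overline{\mathit{\Omega}}_C$ and the closed cavities $\overline{\mathit{\Omega}}_{I,1},\dots,\overline{\mathit{\Omega}}_{I,q}$ are compact and disjoint from $\partial\mathit{\Omega}$, a relative neighbourhood of $\partial\mathit{\Omega}$ in $\overline{\mathit{\Omega}}$ lies in $\overline{\mathit{\Omega}}_{I,{\rm ext}}$, on which $\hat\psi=\psi$, whence $\hat\psi$ has zero trace on $\partial\mathit{\Omega}$ and thus $\hat\psi\in H^1_0(\mathit{\Omega})$. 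As $\hat\psi$ is constant, equal to $c_{\rm ext}$, on every interface $\Gamma_1,\dots,\Gamma_q,\Gamma_{\rm ext}$, we get $\nabla\hat\psi\in G(\mathit{\Omega},\mathit{\Omega}_C)$ and $\nabla\hat\psi=\mathbbm{1}_{\mathit{\Omega}_{I,{\rm ext}}}\nabla\psi$ a.e., hence $\int_{\mathit{\Omega}_{I,{\rm ext}}}\bm{A}\cdot\nabla\psi\,{\rm d}\xi=\langle\bm{A},\nabla\hat\psi\rangle_{L^2(\mathit{\Omega};\mathbb{R}^3)}=0$. Inserting the three vanishing terms into the identity above gives $\langle\mathbbm{1}_{\mathit{\Omega}_C}\bm{A},\nabla\psi\rangle_{L^2(\mathit{\Omega};\mathbb{R}^3)}=0$ for all $\nabla\psi\in G(\mathit{\Omega},\mathit{\Omega}_C)$, i.e.\ $\mathbbm{1}_{\mathit{\Omega}_C}\bm{A}\in X(\mathit{\Omega},\mathit{\Omega}_C)$.

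The main obstacle is precisely the verification, in the external case, that the constant extension $\hat\psi$ lands in $H^1_0(\mathit{\Omega})$ and still respects all the interface-constancy constraints defining $G(\mathit{\Omega},\mathit{\Omega}_C)$; this is where the Lipschitz regularity of $\mathit{\Omega}_C$, $\mathit{\Omega}_I$ and the topology fixed in Assumption~\ref{ass:omega} (the cavities and $\overline{\mathit{\Omega}}_C$ being compactly contained in $\mathit{\Omega}$, with $\Gamma_{\rm ext}$ a single Lipschitz interface) enter. Everything else is bookkeeping with the definitions \eqref{eq:statespaceorth}--\eqref{eq:statespace}.
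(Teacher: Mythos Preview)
Your proof is correct and rests on the same idea as the paper's---replacing the ``cut-off'' gradient by the gradient of a genuine test function in $G(\mathit{\Omega},\mathit{\Omega}_C)$ via the interface constants---but the paper organizes the construction more compactly. Instead of handling each component of $\mathit{\Omega}_I$ separately and subtracting, it builds a single auxiliary function
\[
\widetilde{\psi}=\begin{cases}\psi-c_{\rm ext} & \text{on }\mathit{\Omega}_C,\\ c_j-c_{\rm ext} & \text{on }\mathit{\Omega}_{I,j},\; j=1,\ldots,q,\\ 0 & \text{on }\mathit{\Omega}_{I,{\rm ext}},\end{cases}
\]
checks $\widetilde{\psi}\in H^1_0(\mathit{\Omega})$ by matching traces across the interfaces, and observes that $\nabla\widetilde{\psi}=\mathbbm{1}_{\mathit{\Omega}_C}\nabla\psi$ a.e.\ with $\nabla\widetilde{\psi}\in G(\mathit{\Omega},\mathit{\Omega}_C)$, which gives the conclusion in one stroke. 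Your functions are the complementary pieces: in fact $\widetilde{\psi}=\psi-\hat\psi-\sum_{j=1}^q\psi_j$, so the two arguments are the same decomposition read from opposite ends. The paper's version has the small advantage that the ``delicate'' external case you isolate disappears---$\widetilde{\psi}$ is simply set to $0$ on $\mathit{\Omega}_{I,{\rm ext}}$---while your version makes more explicit why each individual integral over a component of $\mathit{\Omega}_I$ vanishes.
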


\begin{proof}
Let $\bm{A}\in X(\mathit{\Omega},\mathit{\Omega}_C)$ and let $\bm{F}\in G(\mathit{\Omega},\mathit{\Omega}_C)$ be arbitrary. Then there exists some $\psi\in H^1_0(\mathit{\Omega})$ and $c_1,\ldots,c_q,c_{\rm ext}\in\R$ such that $\bm{F}=\nabla \psi$, $\left.\psi\right|_{\Gamma_i} \equiv c_i$ for \mbox{$i=1,\ldots,q$}, and $\left.\psi\right|_{\Gamma_{\rm ext}} \equiv c_{\rm ext}$.
Consider the function
\[
\widetilde{\psi}(\xi)=\begin{cases}\psi(\xi)-c_{\rm ext},&\,\xi\in\mathit{\Omega}_C,\\c_i-c_{\rm ext},&\,\xi\in\mathit{\Omega}_{I,i},\; i=1,\ldots,q,\\0,&\,\xi\in\mathit{\Omega}_{I,{\rm ext}}.\end{cases}
\]
Then  $\widetilde{\psi}\in H^1(\mathit{\Omega}_C \cup \mathit{\Omega}_I)$ and the traces of $\widetilde{\psi}$ from both sides of the interfaces $\Gamma_1,\ldots,\Gamma_q$ and $\Gamma_{{\rm ext}}$ coincide. Hence, $\widetilde{\psi}\in H^1(\mathit{\Omega})$ and, by $\widetilde{\psi}|_{\mathit{\Omega}_{I,{\rm ext}}}=0$, we even have $\widetilde{\psi}\in H^1_0(\mathit{\Omega})$. This gives $\nabla \widetilde{\psi}\in G(\mathit{\Omega},\mathit{\Omega}_C)$. Taking into account that $\nabla\widetilde{\psi}$ vanishes on $\mathit{\Omega}_I$, we obtain
$$
0 =\int_{\mathit{\Omega}} \bm{A} \cdot \nabla \widetilde{\psi}\, {\rm d}\xi
  =\int_{\mathit{\Omega}_C} \bm{A} \cdot \nabla \widetilde{\psi}\,  {\rm d}\xi
	=\int_{\mathit{\Omega}_C} \bm{A}\cdot \nabla {\psi}\,  {\rm d}\xi
	=\int_{\mathit{\Omega}} (\mathbbm{1}_{\mathit{\Omega}_C}\bm{A}) \cdot \bm{F}\,  {\rm d}\xi,
$$
and, thus, $\mathbbm{1}_{\mathit{\Omega}_C}\bm{A}\in X(\mathit{\Omega},\mathit{\Omega}_C)$.
\end{proof}

Next, we show that $X_0(\curl,\mathit{\Omega},\mathit{\Omega}_C)$ is dense in $X(\mathit{\Omega},\mathit{\Omega}_C)$. Moreover, we derive an~estimate on the $L^2$-norm of $\bm{A}\in X_0(\curl,\mathit{\Omega},\mathit{\Omega}_C)$ by means of the $L^2$-norm of $\nabla\times \bm{A}$ and the $L^2$-norm of the restriction of $\bm{A}$ to the conducting subdomain $\mathit{\Omega}_C$.

\newpage
\begin{lemma}\label{lem:denscoerc}
Let $\mathit{\Omega}\subset\mathbb{R}^3$ and $\mathit{\Omega}_C$, $\mathit{\Omega}_I\subseteq\mathit{\Omega}$ be as in Assumption~\textup{\ref{ass:omega}}. Then $X_0(\curl,\mathit{\Omega},\mathit{\Omega}_C)$ is dense in $X(\mathit{\Omega},\mathit{\Omega}_C)$. Further, there exists $L_C>0$ such that, for all $\bm{A}\in X_0(\curl,\mathit{\Omega},\mathit{\Omega}_C)$,
\begin{equation}
\|\bm{A}\|_{L^2(\mathit{\Omega};\R^3)}^2\leq L_C\,\left(\|\bm{A}\|_{L^2(\mathit{\Omega}_C;\R^3)}^2+\|\nabla\times \bm{A}\|_{L^2(\mathit{\Omega};\R^3)}^2\right) .
\label{eq:curlest}
\end{equation}
\end{lemma}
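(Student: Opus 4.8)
The plan is to prove the two assertions separately, starting with the density statement, which I expect to be relatively straightforward, and then turning to the coercivity-type estimate \eqref{eq:curlest}, which I expect to be the main obstacle.

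\textbf{Density.} To show that $X_0(\curl,\mathit{\Omega},\mathit{\Omega}_C)$ is dense in $X(\mathit{\Omega},\mathit{\Omega}_C)$, I would argue by a Helmholtz-type decomposition. Given $\bm{A}\in X(\mathit{\Omega},\mathit{\Omega}_C)$, approximate it first in $L^2(\mathit{\Omega};\R^3)$ by smooth compactly supported fields $\bm{A}_k\in C_c^\infty(\mathit{\Omega};\R^3)\subset H_0(\curl,\mathit{\Omega})$. These $\bm{A}_k$ need not lie in $X(\mathit{\Omega},\mathit{\Omega}_C)$, so I would project them onto $X(\mathit{\Omega},\mathit{\Omega}_C)$ using the orthogonal projection $P$ in $L^2(\mathit{\Omega};\R^3)$ onto this closed subspace, i.e.\ subtract off the $G(\mathit{\Omega},\mathit{\Omega}_C)$-component. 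The key point is that $P\bm{A}_k$ still lies in $H_0(\curl,\mathit{\Omega})$: the removed component is a gradient $\nabla\psi$ with $\psi\in H^1_0(\mathit{\Omega})$ (constant on the interface components), and such a gradient lies in $H_0(\curl,\mathit{\Omega})$ with vanishing curl and vanishing tangential trace, so $P\bm{A}_k\in H_0(\curl,\mathit{\Omega})\cap X(\mathit{\Omega},\mathit{\Omega}_C)=X_0(\curl,\mathit{\Omega},\mathit{\Omega}_C)$. Since $P$ is continuous on $L^2$ and fixes $\bm{A}$, we get $P\bm{A}_k\to P\bm{A}=\bm{A}$ in $L^2$, establishing density in the $L^2$-topology, which is precisely the topology of $X(\mathit{\Omega},\mathit{\Omega}_C)$.

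\textbf{The estimate.} For \eqref{eq:curlest} I would argue by contradiction and compactness. Suppose no such $L_C$ exists; then there is a sequence $(\bm{A}_k)$ in $X_0(\curl,\mathit{\Omega},\mathit{\Omega}_C)$ with $\|\bm{A}_k\|_{L^2(\mathit{\Omega};\R^3)}=1$ while $\|\bm{A}_k\|_{L^2(\mathit{\Omega}_C;\R^3)}^2+\|\nabla\times\bm{A}_k\|_{L^2(\mathit{\Omega};\R^3)}^2\to 0$. Then $(\bm{A}_k)$ is bounded in $H_0(\curl,\mathit{\Omega})$, so after passing to a subsequence it converges weakly in $H_0(\curl,\mathit{\Omega})$ to some $\bm{A}$, with $\nabla\times\bm{A}=0$ and $\bm{A}|_{\mathit{\Omega}_C}=0$, and $\bm{A}\in X_0(\curl,\mathit{\Omega},\mathit{\Omega}_C)$ since this is a closed subspace. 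The crucial ingredient is a compact embedding: fields in $H_0(\curl,\mathit{\Omega})$ that are additionally divergence-free (which every element of $X(\mathit{\Omega},\mathit{\Omega}_C)$ is, by \eqref{eq:divfree}) embed compactly into $L^2(\mathit{\Omega};\R^3)$ — this is the Weber/Picard–Weck–Witsch compactness result, valid here because $\mathit{\Omega}$ is a bounded Lipschitz domain. Hence $\bm{A}_k\to\bm{A}$ strongly in $L^2(\mathit{\Omega};\R^3)$, so $\|\bm{A}\|_{L^2(\mathit{\Omega};\R^3)}=1$. It remains to show $\bm{A}=0$, which gives the contradiction. Since $\nabla\times\bm{A}=0$ on the simply connected $\mathit{\Omega}$ and $\bm{A}\in H_0(\curl,\mathit{\Omega})$, we have $\bm{A}=\nabla p$ for some $p\in H^1_0(\mathit{\Omega})$; together with $\bm{A}\in X(\mathit{\Omega},\mathit{\Omega}_C)$, i.e.\ $\bm{A}\perp G(\mathit{\Omega},\mathit{\Omega}_C)$, and in particular $\bm{A}\perp\nabla p$ once we know $\nabla p\in G(\mathit{\Omega},\mathit{\Omega}_C)$, we will conclude $\|\bm{A}\|_{L^2}^2=\langle\nabla p,\nabla p\rangle=0$.

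\textbf{Where the work concentrates.} The delicate step is verifying that this curl-free field $\bm{A}=\nabla p$ actually lies in $G(\mathit{\Omega},\mathit{\Omega}_C)$, i.e.\ that its potential $p$ can be chosen constant on each interface $\Gamma_1,\dots,\Gamma_q,\Gamma_{\rm ext}$. This uses $\bm{A}|_{\mathit{\Omega}_C}=0$: on the connected conducting subdomain $\mathit{\Omega}_C$ the potential $p$ is locally constant, hence equal to a single constant $c_{\rm ext}$ there, and by continuity of the trace across each Lipschitz interface, $p|_{\Gamma_i}$ and $p|_{\Gamma_{\rm ext}}$ inherit constant values (for $\Gamma_{\rm ext}$ the value $c_{\rm ext}$, and for each $\Gamma_i$ the corresponding constant $c_i$ coming from the $\mathit{\Omega}_C$-side trace). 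This places $\nabla p$ in $G(\mathit{\Omega},\mathit{\Omega}_C)$, and orthogonality forces $\bm{A}=\nabla p=0$, contradicting $\|\bm{A}\|_{L^2(\mathit{\Omega};\R^3)}=1$. Hence the estimate \eqref{eq:curlest} holds with some finite $L_C>0$.
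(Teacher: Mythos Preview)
Your density argument is correct and essentially identical to the paper's: approximate in $L^2$ by an element of $H_0(\curl,\mathit{\Omega})$, subtract its $G(\mathit{\Omega},\mathit{\Omega}_C)$-component, and observe that this gradient lies in $H_0(\curl,\mathit{\Omega})$ so the result lands in $X_0(\curl,\mathit{\Omega},\mathit{\Omega}_C)$; the paper phrases the last step via the Pythagorean identity where you use continuity of the projection, which amounts to the same thing.

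For the estimate, the paper does not argue from scratch but simply invokes \cite[Lemma~4]{BLS05}. Your contradiction/compactness scheme is the natural strategy, but there is a genuine gap where you apply the Weber--Picard--Weck compactness. You assert that every element of $X(\mathit{\Omega},\mathit{\Omega}_C)$ is divergence-free on $\mathit{\Omega}$, citing \eqref{eq:divfree}; but \eqref{eq:divfree} gives the \emph{opposite} inclusion $L^2(\divg\!=\!0,\mathit{\Omega};\R^3)\subseteq X(\mathit{\Omega},\mathit{\Omega}_C)$, and its right-hand side only guarantees divergence-freeness on $\mathit{\Omega}_C\cup\mathit{\Omega}_I$. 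Since $G(\mathit{\Omega},\mathit{\Omega}_C)$ consists of gradients of $H^1_0$ functions that are constant on the interfaces, it is strictly smaller than $\nabla H^1_0(\mathit{\Omega})$, so its orthogonal complement $X(\mathit{\Omega},\mathit{\Omega}_C)$ is strictly larger than the globally divergence-free fields: elements of $X(\mathit{\Omega},\mathit{\Omega}_C)$ may carry a nonzero distributional divergence concentrated on $\Gamma_1,\dots,\Gamma_q,\Gamma_{\rm ext}$, and the standard compact embedding $H_0(\curl,\mathit{\Omega})\cap H(\divg,\mathit{\Omega})\hookrightarrow L^2(\mathit{\Omega};\R^3)$ does not apply to your sequence as stated.

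The remainder of your contradiction argument (writing the weak limit as $\nabla p$ on the simply connected $\mathit{\Omega}$, using $\bm{A}|_{\mathit{\Omega}_C}=0$ and connectedness of $\mathit{\Omega}_C$ to force $p$ constant on each interface, and hence $\nabla p\in G(\mathit{\Omega},\mathit{\Omega}_C)$) is fine. What is missing is a valid route to strong $L^2$-convergence of a subsequence. One way to repair this is to exploit that on $\mathit{\Omega}_C$ you already have $\bm{A}_k\to 0$ strongly, and on each component of $\mathit{\Omega}_I$ the restrictions are genuinely divergence-free with bounded curl; one then needs compactness on $\mathit{\Omega}_I$ with mixed boundary information (tangential trace zero on $\partial\mathit{\Omega}$, and control on the interior interfaces coming from the $\mathit{\Omega}_C$-side), which is essentially what is established in \cite{BLS05}.
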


\begin{proof}
The existence of a~constant $L_C>0$ such that the estimate \eqref{eq:curlest} holds for all \mbox{$\bm{A}\in X_0(\curl,\mathit{\Omega},\mathit{\Omega}_C)$} immediately follows from \cite[Lemma~4]{BLS05}.
Hence, it only remains to prove the density statement.
To this end, let $\bm{A}\in X(\mathit{\Omega},\mathit{\Omega}_C)$ and $\varepsilon >0$. {Since $H_0(\curl,\mathit{\Omega})$ contains the space of test functions on $\mathit{\Omega}$, it is dense in $L^2(\mathit{\Omega};\mathbb{R}^3)$. Hence,} there exists some $\bm{C}\in H_0(\curl,\mathit{\Omega})$ such that
\[
\|\bm{A}-\bm{C}\|_{L^2(\mathit{\Omega};\mathbb{R}^3)}<\varepsilon.
\]
Now consider the orthogonal decomposition $\bm{C}=\bm{G}+\nabla \psi$
with $\bm{G}\in X(\mathit{\Omega},\mathit{\Omega}_C)$
and $\nabla\psi\in G(\mathit{\Omega},\mathit{\Omega}_C)$.
Since
$\nabla\times(\nabla \psi)=0$, we have $\bm{G}=\bm{C}-\nabla {\psi}\in H(\curl,\mathit{\Omega})$. Further, since the boundary trace of $\psi\in H^1_0(\mathit{\Omega})$ is constant, the tangential component of the gradient of $\psi$ vanishes at $\partial\mathit{\Omega}$, that is,
 $\nabla\psi\times \bm{n}_o=0$ on $\partial\mathit{\Omega}$. This gives rise to $\bm{G}\in H_0(\curl,\mathit{\Omega})$. Then, by using the Pythagorean identity, we obtain
\[\begin{aligned}
\|\bm{A}-\bm{G}\|_{L^2(\mathit{\Omega};\mathbb{R}^3)}^2\leq\,& \|\bm{A}-\bm{G}\|_{L^2(\mathit{\Omega};\mathbb{R}^3)}^2+\|\nabla \psi\|_{L^2(\mathit{\Omega};\mathbb{R}^3)}^2\\
=\,&\|\bm{A}-(\bm{G}+\nabla \psi)\|_{L^2(\mathit{\Omega};\mathbb{R}^3)}^2=\|\bm{A}-\bm{C}\|_{L^2(\mathit{\Omega};\mathbb{R}^3)}^2<\varepsilon^2.
\end{aligned}\]
This completes the proof.
\end{proof}

{\subsection{The material parameters}
We now state the assumptions on the magnetic reluctivity, the electric conductivity and the resistance matrix.} Note that we consider only isotropic materials without hysteresis effects.

\begin{assumption}[Material parameters]\!\!\label{ass:material}
	\vspace*{-1mm}
{\em \begin{romanlist}[a)]
\item \label{ass:material1}
The electric conductivity $\sigma:\mathit{\Omega}\to\mathbb{R}_{\geq 0}$ is of the form
$\sigma=\sigma_C\mathbbm{1}_{\mathit{\Omega}_C}$ with  a~real number $\sigma_C>0$.

\item \label{ass:material2}
The magnetic reluctivity $\nu:\mathit{\Omega}\times \mathbb{R}_{\ge0}\to\mathbb{R}_{\ge0}$ has the following properties: \vspace*{-1mm}
\begin{romanlist}[(i)]
\item\label{ass:material2a} $\nu$ is measurable;
\item\label{ass:material2c} the function $\zeta\mapsto\nu(\xi,\zeta)\zeta$ is strongly monotone with a~monotonicity constant \mbox{$m_{\nu}>0$} independent of $\xi\in\mathit{\Omega}$. In other words, there exists $m_{\nu}>0$ such that
\[
  \bigl(\nu(\xi,\zeta) \zeta-\nu(\xi,\varsigma)\varsigma\bigr)(\zeta-\varsigma)\geq m_{\nu} (\zeta-\varsigma)^2 \enskip\text{ for all } \xi\in\mathit{\Omega}, \enskip \zeta,\varsigma\in\mathbb{R}_{\ge0};
\]

\item\label{ass:material2d}
the function $\zeta\mapsto\nu(\xi,\zeta)\zeta$ is Lipschitz continuous with a~Lipschitz constant $L_{\nu}>0$ independent of $\xi\in\mathit{\Omega}$. In other words, there exists $L_{\nu}>0$ such that
\[
  |\nu(\xi,\zeta)\zeta-\nu(\xi,\varsigma)\varsigma| \leq L_{\nu} |\zeta-\varsigma|\quad\text{ for all } \xi\in\mathit{\Omega}, \enskip\zeta,\varsigma\in\mathbb{R}_{\ge0}.
\]
\end{romanlist}

\item\label{ass:resistance} The resistance matrix $R\in\mathbb{R}^{m\times m}$ is symmetric and positive definite.
\end{romanlist}
}
\end{assumption}

It immediately follows from the second and third conditions on the magnetic reluctivity $\nu$
that
\begin{equation}\label{eq:Mnugreater}
m_\nu \leq \nu(\xi,\zeta)\leq L_{\nu} \quad \text{ for all } \xi\in \mathit{\Omega} \text{ and all }
\zeta \,>0.
\end{equation}

\section{The Solution Concept}
\label{sec:solution}

In this section, we explain what we mean by a~solution of the coupled MQS system~\eqref{eq:MQS}
and prove the uniqueness result. Notice that by using the canonical isomorphism $L^1 (\mathit{\Omega}\times [0,T]) = L^1 ([0,T] ; L^1 (\mathit{\Omega} ))$, we identify integrable functions defined on $\mathit{\Omega}\times[0,T]$ with integrable functions $[0,T]\to L^1 (\mathit{\Omega} )$. Sometimes we skip the placeholders for the arguments for sake of brevity.

\begin{definition}[Solution of the MQS system]\label{def:sol}
Let $\mathit{\Omega}\subset\mathbb{R}^3$ with subdomains $\mathit{\Omega}_C$ and $\mathit{\Omega}_I$ satisfy Assumption~\textup{\ref{ass:omega}}, and let $X(\mathit{\Omega},\mathit{\Omega}_C)$ and $X_0(\curl\mathit{\Omega},\mathit{\Omega}_C)$ be defined as in \eqref{eq:statespace} and \eqref{eq:statespace2}, respectively. Further, let the initial and winding functions be as in Assumption~\textup{\ref{ass:init}} and the material parameters as in Assumption~\textup{\ref{ass:material}}. Let $T>0$ be fixed and $\bm{v}\in L^2([0,T];\mathbb{R}^m)$. Then $(\bm{A},\bm{i})$ with $\bm{A}:\overline{\mathit{\Omega}}\times [0, T]\to\mathbb{R}^3$ and $\bm{i}:[0,T]\to\mathbb{R}^m$ is called a~{\em weak solution}
of the coupled MQS system \eqref{eq:MQS}, if\vspace*{-1mm}
\begin{romanlist}[a)]
\item\label{item:sol1} $\sigma\bm{A} \in C([0,T]; X(\mathit{\Omega},\mathit{\Omega}_C))\cap H_{\rm loc}^1((0,T];X(\mathit{\Omega},\mathit{\Omega}_C))$ and $\sigma\bm{A}(0)=\sigma\bm{A}_0$,

\vspace*{.5mm}
\item\label{item:sol2} $\int_\mathit{\Omega} \chi^\top\bm{A} \, {\rm d}\xi \in C([0,T] ; \R^m) \cap H_{\rm loc}^1((0,T];\mathbb{R}^m)$ and $\int_\mathit{\Omega} \chi^\top\bm{A}(0) \, {\rm d}\xi=\int_\mathit{\Omega} \chi^\top\bm{A}_0 \, {\rm d}\xi$,

\vspace*{.5mm}
\item\label{item:sol3} $\bm{A}\in L^2([0,T];X_0(\curl,\mathit{\Omega},\mathit{\Omega}_C))$ and $\bm{i}\in  L_{\rm loc}^2((0,T];\mathbb{R}^m)$,

\vspace*{.5mm}
\item\label{item:sol7} for all $\bm{F}\in X_0(\curl,\mathit{\Omega},\mathit{\Omega}_C)$, the equations
\begin{equation}
\arraycolsep=2pt
\begin{array}{rcl}
\displaystyle{\tfrac{\rm d }{{\rm d} t}\!\int_\mathit{\Omega} \!\sigma \bm{A}(t)\cdot \bm{F}\, {\rm d}\xi
+\!\!	\int_\mathit{\Omega} \!\nu(\cdot,\|\nabla\!\times\! \bm{A}(t)\|_2)(\nabla\!\times\! \bm{A}(t))
\cdot(\nabla\!\times\! \bm{F})\, {\rm d}\xi} & \!= &\!\!
\displaystyle{\int_\mathit{\Omega} \!\chi\, \bm{i}(t)\cdot \bm{F}\,  {\rm d}\xi,} \\[2mm]
\displaystyle{\tfrac{\rm d }{{\rm d} t}\!\int_\mathit{\Omega} \chi^\top\bm{A}(t)\, {\rm d}\xi+R\,\bm{i}(t) }& = &\! \bm{v}(t)
\end{array}
\label{eq:weak}
\end{equation}
hold for almost all $t\in[0,T]$.
\end{romanlist}
\end{definition}

\begin{remark}\label{rem:infdimds}
The first equation in \eqref{eq:weak} is motivated by an integration by parts with the weak curl operator \textup{(}cf.\ \eqref{eq:curladj}\textup{)}.  In particular, if $(\bm{A},\bm{i})$ is a~classical solution in the sense that all partial derivatives in  \eqref{eq:MQS} exist in the classical sense and are continuous up to the boundary of $\mathit{\Omega}$, and \eqref{eq:MQS}
holds pointwise everywhere in $\mathit{\Omega}\times [0,T]$ together with the boundary condition, then $(\bm{A},\bm{i})$ is a~weak solution.
\end{remark}

Condition \ref{item:sol1}) in Definition~\ref{def:sol} means that
$\tfrac{d}{dt}\sigma\bm{A}:[0,T]\to X_0(\curl,\mathit{\Omega},\mathit{\Omega}_C)'$ is measurable, where $X_0(\curl,\mathit{\Omega},\mathit{\Omega}_C)'$ is the dual of $X_0(\curl,\mathit{\Omega},\mathit{\Omega}_C)$ with respect to the pivot space $X(\mathit{\Omega},\mathit{\Omega}_C)$. Define the operators
\allowdisplaybreaks
\begin{subequations}\label{eq:EAiiop}
\begin{align}
\cEl_{11}:\!\!\!\!&& X(\mathit{\Omega},\mathit{\Omega}_C)&\,\to\,X(\mathit{\Omega},\mathit{\Omega}_C),\label{eq:E11} \\
          &&                               \bm{A}&\;\mapsto \,  \sqrt{\sigma} \bm{A} ,\nonumber\\[2mm]
\cEl_{21}:\!\!\!\!&& X(\mathit{\Omega},\mathit{\Omega}_C)&\,\to\,\mathbb{R}^m,\label{eq:E21} \\
          &&    \bm{A}&\,\mapsto \displaystyle{\,\int_\mathit{\Omega} \chi^\top\bm{A} \, {\rm d}\xi,}
					\nonumber\\[2mm]
\cAl_{11}:\!\!\!\!&&\hspace*{-4mm}X_0(\curl,\mathit{\Omega},\mathit{\Omega}_C) &\,\to \,X_0(\curl,\mathit{\Omega},\mathit{\Omega}_C)',\label{eq:A11}\\
          &&\bm{A}&\,\mapsto\,\displaystyle{\biggl(\bm{F}\mapsto
	\int_\mathit{\Omega} \nu(\cdot,\|\nabla\times \bm{A}\|_2)(\nabla\times \bm{A})\cdot(\nabla\times \bm{F})\, {\rm d}\xi\biggr),}\hspace*{-10mm}\nonumber \\[2mm]
\cAl_{12}:\!\!\!\!&&\mathbb{R}^m                  &\,\to\, X(\mathit{\Omega},\mathit{\Omega}_C),\label{eq:A12}\\
          &&\bm{i}&\,\mapsto\,  \chi\, \bm{i},\nonumber \\[2mm]
\cAl_{22}:\!\!\!\!&&\mathbb{R}^m                        &\,\to\,\mathbb{R}^m,\label{eq:A22} \\
          &&\bm{i}&\,\mapsto\, R^{1/2} \,\bm{i}, \nonumber
\end{align}
\end{subequations}
and
$$
\arraycolsep=2pt
\begin{array}{rrcl}
\qquad \cEl:&X(\mathit{\Omega},\mathit{\Omega}_C)\times \mathbb{R}^m\,&\to& X(\mathit{\Omega},\mathit{\Omega}_C) \times \mathbb{R}^m,\qquad\qquad\qquad\qquad\qquad\qquad\\
&(\bm{A},\bm{i})\,&\mapsto&(\cEl_{11}^* \cEl_{11}^{}\bm{A},\cEl_{21}^{}\bm{A}),\\[2.5mm]
\qquad \cAl: & X_0(\curl,\mathit{\Omega},\mathit{\Omega}_C) \times \mathbb{R}^m\, & \to & X_0(\curl,\mathit{\Omega},\mathit{\Omega}_C)'\times \mathbb{R}^m,\\
&(\bm{A},\bm{i})\,&\mapsto&(-\cAl_{11}^{}(\bm{A}) +\cAl_{12}^{}\,\bm{i},-\cAl_{22}^*\cAl_{22}^{}\,\bm{i}),\\[2.5mm]
\qquad \cBl:& \mathbb{R}^m\,&\to & X_0(\curl,\mathit{\Omega},\mathit{\Omega}_C)' \times \mathbb{R}^m,\\
&\bm{v}\,&\mapsto&(0,\bm{v}).
\end{array}
$$
Then the coupled MQS system \eqref{eq:MQS} can equivalently be written as an~abstract differential-algebraic system
  \begin{equation}\label{eq:DS1}
\tfrac{\rm d}{{\rm d}t}\cEl x(t) \,= \cAl (x(t))\,  +\, \cBl  u(t),\qquad \cEl x(0)=\cEl x_0,
\end{equation}
with the input $u(t)=\bm{v}(t)$, the state $x(t)=(\bm{A}(t), \bm{i}(t))$, and the initial condition
$x_0=(\bm{A}_0, 0)$.
Note that the operators $\cEl_{11}$, $\cEl_{21}$ (and thus also $\cEl$), $\cAl_{12}$ and $\cAl_{22}$ are linear, whereas $\cAl_{11}$ (and thus also $\cAl$) is nonlinear unless the reluctivity $\nu$ is constant with respect to the second argument.

Our aim is to derive existence, uniqueness and qualitative behavior of solutions of the MQS system \eqref{eq:MQS}.
The existence proof is more involved and is subject of Section~\ref{sec:gradsys}. The proof of uniqueness is by far more simple and is presented here in Theorem \ref{thm:uniqueness} below. The essential ingredient is that the operator $\cAl_{11}$ is monotone in some sense. This is subject of the subsequent lemma, which is a~straightforward consequence of Assumption~\ref{ass:material}~b)(ii), whence the proof is omitted, see  \cite{You13} for details.

\begin{lemma}\label{lem:A11mon}
Let $\mathit{\Omega}\subset\mathbb{R}^3$ be a~bounded Lipschitz domain and let the operator $\cAl_{11}$ be defined as in \eqref{eq:A11} with $\nu$ satisfying Assumption~\textup{\ref{ass:material}~b)(ii)}. Then for all functions
\mbox{$\bm{A}_1,\bm{A}_2\in X_0(\curl,\mathit{\Omega},\mathit{\Omega}_C)$}, the operator $\cAl_{11}$ fulfills
\begin{equation}
\bigl\langle {\bm{A}}_1-{\bm{A}}_2,\cAl_{11}({\bm{A}}_1)-\cAl_{11}({\bm{A}}_2) \bigr\rangle
\geq m_\nu \, \|\nabla\times ({\bm{A}}_1-{\bm{A}}_2)\|_{L^2(\mathit{\Omega};\mathbb{R}^3)}^2,
\label{eq:A11diss}
\end{equation}
where $m_\nu$ is the monotonicity constant of $\nu$.
\end{lemma}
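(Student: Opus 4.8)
The claim is a pointwise-to-integral monotonicity transfer: the scalar strong monotonicity of $\zeta\mapsto\nu(\xi,\zeta)\zeta$ from Assumption~\ref{ass:material}~b)(ii) must be lifted to the operator $\cAl_{11}$ acting on $H(\curl)$-type fields. The plan is to write out the duality pairing $\langle \bm{A}_1-\bm{A}_2,\cAl_{11}(\bm{A}_1)-\cAl_{11}(\bm{A}_2)\rangle$ explicitly as the integral
\[
\int_\mathit{\Omega}\bigl(\nu(\cdot,\|\nabla\times\bm{A}_1\|_2)\,\nabla\times\bm{A}_1-\nu(\cdot,\|\nabla\times\bm{A}_2\|_2)\,\nabla\times\bm{A}_2\bigr)\cdot\bigl(\nabla\times\bm{A}_1-\nabla\times\bm{A}_2\bigr)\,{\rm d}\xi,
\]
and then to bound the integrand pointwise from below by $m_\nu\|\nabla\times\bm{A}_1-\nabla\times\bm{A}_2\|_2^2$ at almost every $\xi\in\mathit{\Omega}$; integrating that pointwise inequality over $\mathit{\Omega}$ then gives the result.

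The core is therefore a purely vectorial lemma: if $\phi:\mathbb{R}_{\ge0}\to\mathbb{R}_{\ge0}$ satisfies $(\phi(\zeta)-\phi(\varsigma))(\zeta-\varsigma)\ge m_\nu(\zeta-\varsigma)^2$ for all $\zeta,\varsigma\ge0$, then for all $u,w\in\mathbb{R}^3$ one has $\bigl(\phi(\|u\|_2)\tfrac{u}{\|u\|_2}\cdot\|u\|_2-\ldots\bigr)$, i.e.
\[
\Bigl(\tfrac{\phi(\|u\|_2)}{\|u\|_2}\,u-\tfrac{\phi(\|w\|_2)}{\|w\|_2}\,w\Bigr)\cdot(u-w)\ \ge\ m_\nu\,\|u-w\|_2^2,
\]
with the convention that the coefficient is interpreted as $\nu(\xi,0)$-times-the-vector when the argument vanishes (here $\phi(\zeta)=\nu(\xi,\zeta)\zeta$, so $\phi(\|u\|_2)u/\|u\|_2=\nu(\xi,\|u\|_2)u$ and the $\|u\|_2=0$ case is unproblematic since the vector is $0$). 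To prove this I would expand the left-hand side as
\[
\nu(\xi,\|u\|_2)\|u\|_2^2+\nu(\xi,\|w\|_2)\|w\|_2^2-\bigl(\nu(\xi,\|u\|_2)+\nu(\xi,\|w\|_2)\bigr)\,u\cdot w,
\]
use Cauchy--Schwarz $u\cdot w\le\|u\|_2\|w\|_2$ together with the nonnegativity of the reluctivity \eqref{eq:Mnugreater} to replace $u\cdot w$ by $\|u\|_2\|w\|_2$, which yields the lower bound
\[
\nu(\xi,\|u\|_2)\|u\|_2^2+\nu(\xi,\|w\|_2)\|w\|_2^2-\bigl(\nu(\xi,\|u\|_2)+\nu(\xi,\|w\|_2)\bigr)\|u\|_2\|w\|_2=\bigl(\phi(\|u\|_2)-\phi(\|w\|_2)\bigr)\bigl(\|u\|_2-\|w\|_2\bigr),
\]
and then invoke the scalar monotonicity with $\zeta=\|u\|_2$, $\varsigma=\|w\|_2$ to conclude the bound is $\ge m_\nu(\|u\|_2-\|w\|_2)^2$. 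Since $(\|u\|_2-\|w\|_2)^2\le\|u-w\|_2^2$ by the reverse triangle inequality, the vectorial lemma follows. Applying it with $u=\nabla\times\bm{A}_1(\xi)$, $w=\nabla\times\bm{A}_2(\xi)$ and integrating in $\xi$ gives \eqref{eq:A11diss}.

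The only mild subtlety — and the one place to be careful — is the handling of the term $u\cdot w$: one wants to pass from $u\cdot w$ to $\|u\|_2\|w\|_2$ in a direction that \emph{decreases} the expression, and this is legitimate precisely because the total coefficient $\nu(\xi,\|u\|_2)+\nu(\xi,\|w\|_2)$ multiplying $u\cdot w$ is nonnegative; this is where \eqref{eq:Mnugreater} (equivalently the lower bound $m_\nu>0$) enters. Beyond this, measurability of the integrand is inherited from Assumption~\ref{ass:material}~b)(i) and the fact that $\bm{A}_j\in H_0(\curl,\mathit{\Omega})$, and finiteness of the pairing is already implicit in the definition \eqref{eq:A11} of $\cAl_{11}$ as a map into $X_0(\curl,\mathit{\Omega},\mathit{\Omega}_C)'$. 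Since this is entirely elementary once the vectorial inequality is isolated, it is consistent with the paper's remark that the proof is omitted and can be found in \cite{You13}.
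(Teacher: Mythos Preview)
There is a genuine gap in the final step of your vectorial lemma. You correctly arrive at
\[
\bigl(\nu(\xi,\|u\|_2)\,u-\nu(\xi,\|w\|_2)\,w\bigr)\cdot(u-w)\ \ge\ m_\nu\bigl(\|u\|_2-\|w\|_2\bigr)^2,
\]
but then you invoke the reverse triangle inequality $(\|u\|_2-\|w\|_2)^2\le\|u-w\|_2^2$ to conclude the bound $\ge m_\nu\|u-w\|_2^2$. That inference is in the wrong direction: since $(\|u\|_2-\|w\|_2)^2$ is the \emph{smaller} quantity, the lower bound $m_\nu(\|u\|_2-\|w\|_2)^2$ is weaker than the one you need. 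A concrete counterexample to your chain of inequalities (though not to the lemma itself) is $u=-w\neq 0$ with $\nu$ constant: then $(\|u\|_2-\|w\|_2)^2=0$ while $\|u-w\|_2^2=4\|u\|_2^2$, so your intermediate bound gives nothing.

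The fix is to subtract the target \emph{before} applying Cauchy--Schwarz. Writing $\tilde\nu(\xi,\zeta):=\nu(\xi,\zeta)-m_\nu\ge 0$ (nonnegativity by \eqref{eq:Mnugreater}), one has
\[
\bigl(\nu(\xi,\|u\|_2)u-\nu(\xi,\|w\|_2)w\bigr)\cdot(u-w)-m_\nu\|u-w\|_2^2
=\bigl(\tilde\nu(\xi,\|u\|_2)u-\tilde\nu(\xi,\|w\|_2)w\bigr)\cdot(u-w).
\]
Now run your argument with $\tilde\nu$ in place of $\nu$: the coefficient $\tilde\nu(\xi,\|u\|_2)+\tilde\nu(\xi,\|w\|_2)$ is still nonnegative, so Cauchy--Schwarz applies as before, and the scalar map $\zeta\mapsto\tilde\nu(\xi,\zeta)\zeta=\nu(\xi,\zeta)\zeta-m_\nu\zeta$ is nondecreasing by Assumption~\ref{ass:material}~b)(ii). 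This yields $(\tilde\nu(\xi,\|u\|_2)\|u\|_2-\tilde\nu(\xi,\|w\|_2)\|w\|_2)(\|u\|_2-\|w\|_2)\ge 0$, and hence the desired pointwise inequality. The rest of your outline (pointwise-to-integral, measurability, identification of the pairing) is fine and matches the spirit of the argument the paper defers to \cite{You13}.
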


We now present the uniqueness result.

\begin{theorem}[Uniqueness of the solutions] \label{thm:uniqueness}
Let $\mathit{\Omega}\subset\mathbb{R}^3$ with subdomains $\mathit{\Omega}_C$ and $\mathit{\Omega}_I$ satisfy Assumption~\textup{\ref{ass:omega}}, and let $X(\mathit{\Omega},\mathit{\Omega}_C)$ be defined as in \eqref{eq:statespace}. Further, let the initial and winding functions be as in Assumption~\textup{\ref{ass:init}} and  the material parameters as in Assumption~\textup{\ref{ass:material}}. Let $T>0$ be fixed and let $\bm{v}\in L^2([0,T];\mathbb{R}^m)$ be a given voltage.
Then the coupled MQS system \eqref{eq:MQS} admits at most one weak solution $(\bm{A},\bm{i})$ on $[0,T]$.
\end{theorem}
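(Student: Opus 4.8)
The plan is to prove uniqueness via an energy estimate for the difference of two solutions. Suppose $(\bm{A}_1,\bm{i}_1)$ and $(\bm{A}_2,\bm{i}_2)$ are two weak solutions of \eqref{eq:MQS} on $[0,T]$ with the same data, and set $\bm{A}:=\bm{A}_1-\bm{A}_2$, $\bm{i}:=\bm{i}_1-\bm{i}_2$. Then $\sigma\bm{A}(0)=0$ and $\int_\mathit{\Omega}\chi^\top\bm{A}(0)\,{\rm d}\xi=0$, and subtracting the two weak formulations \eqref{eq:weak} gives, for almost all $t$ and all $\bm{F}\in X_0(\curl,\mathit{\Omega},\mathit{\Omega}_C)$,
\[
\tfrac{\rm d}{{\rm d}t}\!\int_\mathit{\Omega}\!\sigma\bm{A}(t)\cdot\bm{F}\,{\rm d}\xi
+\bigl\langle\bm{F},\cAl_{11}(\bm{A}_1(t))-\cAl_{11}(\bm{A}_2(t))\bigr\rangle
=\int_\mathit{\Omega}\chi\,\bm{i}(t)\cdot\bm{F}\,{\rm d}\xi,
\]
together with $\tfrac{\rm d}{{\rm d}t}\int_\mathit{\Omega}\chi^\top\bm{A}(t)\,{\rm d}\xi+R\,\bm{i}(t)=0$.

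The key step is to test the first identity with $\bm{F}=\bm{A}(t)$ and the second with $\bm{i}(t)$, then add. Using the regularity in Definition~\ref{def:sol}~\ref{item:sol1})--\ref{item:sol3}) and the chain rule for the Bochner-Sobolev functions $\sigma\bm{A}=\sigma_C\mathbbm{1}_{\mathit{\Omega}_C}\bm{A}$ and $\int_\mathit{\Omega}\chi^\top\bm{A}\,{\rm d}\xi$, the two time-derivative terms combine to $\tfrac12\tfrac{\rm d}{{\rm d}t}\bigl(\sigma_C\|\bm{A}(t)\|_{L^2(\mathit{\Omega}_C;\R^3)}^2\bigr)$ plus a term that, against the coupling, reproduces $-\langle R\bm{i}(t),\bm{i}(t)\rangle$; indeed $\int_\mathit{\Omega}\chi\,\bm{i}(t)\cdot\bm{A}(t)\,{\rm d}\xi=\bm{i}(t)^\top\!\int_\mathit{\Omega}\chi^\top\bm{A}(t)\,{\rm d}\xi=-\bm{i}(t)^\top\tfrac{\rm d}{{\rm d}t}\int_\mathit{\Omega}\chi^\top\bm{A}(t)\,{\rm d}\xi+\ldots$, so the cross terms cancel and one is left with
\[
\tfrac12\tfrac{\rm d}{{\rm d}t}\bigl(\sigma_C\|\bm{A}(t)\|_{L^2(\mathit{\Omega}_C;\R^3)}^2\bigr)
+\bigl\langle\bm{A}(t),\cAl_{11}(\bm{A}_1(t))-\cAl_{11}(\bm{A}_2(t))\bigr\rangle
+\langle R\,\bm{i}(t),\bm{i}(t)\rangle=0 .
\]
By Lemma~\ref{lem:A11mon} the middle term is $\geq m_\nu\|\nabla\times\bm{A}(t)\|_{L^2(\mathit{\Omega};\R^3)}^2\geq0$, and by Assumption~\ref{ass:material}~c) the last term is nonnegative; hence $t\mapsto\sigma_C\|\bm{A}(t)\|_{L^2(\mathit{\Omega}_C;\R^3)}^2$ is nonincreasing on $[0,T]$ and vanishes at $t=0$, so $\bm{A}(t)|_{\mathit{\Omega}_C}=0$ for all $t$.

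To recover $\bm{A}\equiv0$ on all of $\mathit{\Omega}$ I would then integrate the displayed identity over $[0,T]$: since the boundary terms vanish, $\int_0^T m_\nu\|\nabla\times\bm{A}(t)\|_{L^2(\mathit{\Omega};\R^3)}^2\,{\rm d}t=0$, so $\nabla\times\bm{A}(t)=0$ for almost all $t$. Combining $\bm{A}(t)|_{\mathit{\Omega}_C}=0$ with $\nabla\times\bm{A}(t)=0$ and $\bm{A}(t)\in X_0(\curl,\mathit{\Omega},\mathit{\Omega}_C)$, the coercivity estimate \eqref{eq:curlest} of Lemma~\ref{lem:denscoerc} forces $\|\bm{A}(t)\|_{L^2(\mathit{\Omega};\R^3)}=0$ for almost all $t$, i.e.\ $\bm{A}_1=\bm{A}_2$. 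Finally, from $R\,\bm{i}(t)=-\tfrac{\rm d}{{\rm d}t}\int_\mathit{\Omega}\chi^\top\bm{A}(t)\,{\rm d}\xi=0$ and invertibility of $R$ we get $\bm{i}_1=\bm{i}_2$.

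The main obstacle is the justification of the chain rule / integration-by-parts in time used to pass from the weak formulation to the energy identity: the admissible test functions $\bm{F}$ are time-independent elements of $X_0(\curl,\mathit{\Omega},\mathit{\Omega}_C)$, whereas we want to test with the time-dependent $\bm{A}(t)$ itself, which lies in $L^2([0,T];X_0(\curl,\mathit{\Omega},\mathit{\Omega}_C))$ while only its $\sigma$-part and its $\chi^\top$-integral have $H^1_{\rm loc}$ regularity in time. This is a standard but delicate Lions-type argument (pairing $\tfrac{\rm d}{{\rm d}t}\cEl x$ in the dual space with $x$ in the space), and one must also handle the open interval $(0,T]$ versus the closed interval and the limit $t\to0+$ using the $C([0,T];\cdot)$ regularity; since the estimate only controls the difference on $\mathit{\Omega}_C$ directly, the extra step invoking \eqref{eq:curlest} is essential and is what makes the degenerate (mixed parabolic-elliptic) structure tractable.
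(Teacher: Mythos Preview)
Your overall strategy---an energy estimate for the difference, strong monotonicity of $\cAl_{11}$ from Lemma~\ref{lem:A11mon}, and then the coercivity estimate~\eqref{eq:curlest} to pass from control on $\mathit{\Omega}_C$ and on $\nabla\times\bm A$ to control on all of $\mathit{\Omega}$---is exactly the paper's. However, the displayed energy identity is wrong: testing the first difference equation with $\bm A(t)$ yields
\[
\tfrac{\sigma_C}{2}\tfrac{\rm d}{{\rm d}t}\|\bm A(t)\|_{L^2(\mathit{\Omega}_C;\R^3)}^2
+\bigl\langle\bm A(t),\cAl_{11}(\bm A_1(t))-\cAl_{11}(\bm A_2(t))\bigr\rangle
=\bm i(t)^\top q(t),
\]
with $q(t)=\int_\mathit{\Omega}\chi^\top\bm A(t)\,{\rm d}\xi$, while testing the second with $\bm i(t)$ gives $\bm i(t)^\top\dot q(t)+\bm i(t)^\top R\,\bm i(t)=0$. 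The ``cross terms'' are $\bm i^\top q$ and $\bm i^\top\dot q$, and these do \emph{not} cancel; your identity would require $q=\dot q$.

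The fix---and this is precisely what the paper does, phrased there as a Schur complement---is to eliminate $\bm i$ from the second equation first: $\bm i=-R^{-1}\dot q$, so that the right-hand side above becomes $\bm i^\top q=-\dot q^\top R^{-1}q=-\tfrac12\tfrac{\rm d}{{\rm d}t}\bigl(q^\top R^{-1}q\bigr)$ by symmetry of $R^{-1}$. The correct Lyapunov quantity is therefore
\[
\tfrac{\sigma_C}{2}\|\bm A(t)\|_{L^2(\mathit{\Omega}_C;\R^3)}^2
+\tfrac12\Bigl\|R^{-1/2}\!\int_\mathit{\Omega}\chi^\top\bm A(t)\,{\rm d}\xi\Bigr\|_2^2,
\]
and its derivative plus the monotone term is $\le0$. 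With this correction your remaining steps go through verbatim: integrating gives $\nabla\times\bm A=0$ and $\bm A|_{\mathit{\Omega}_C}=0$ a.e., Lemma~\ref{lem:denscoerc} forces $\bm A=0$, and then $\bm i=-R^{-1}\dot q=0$. Your remarks on the time-regularity obstacle are on point; the paper handles it by integrating over $[t_0,t]$ with $t_0>0$ (where $H^1_{\rm loc}$ applies) and then letting $t_0\to0+$ using the $C([0,T];\cdot)$ continuity of $\sigma\bm A$ and $q$.
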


\begin{proof}
Assume that  $(\bm{A}_k,\bm{i}_k)$,  $k=1$, $2$, are two weak solutions of the coupled MQS system \eqref{eq:MQS}. Consider the operators $\cEl_{ij}$ and $\cAl_{ij}$ as defined in \eqref{eq:EAiiop}.
Note that $\cAl_{12}$ is the adjoint of $\cEl_{21}$, that is,
$\cAl_{12}^{}=\cEl_{21}^*$,
and that $\cEl_{11}^* \cEl_{11}^{}:X(\mathit{\Omega},\mathit{\Omega}_C)\to X(\mathit{\Omega},\mathit{\Omega}_C)$ and $\cAl_{22}^{} \cAl_{22}^*:\mathbb{R}^m\to \mathbb{R}^m$ are both self-adjoint and positive.
Then
\begin{equation}\label{eq:opdae}
\begin{aligned}
\tfrac{\rm d}{{\rm d}t}\cEl_{11}^* \cEl_{11}^{} \bm{A}_k(t) =&\,- \cAl_{11}(\bm{A}_k(t)) + \cEl_{21}^* \bm{i}_k(t),\\
\tfrac{\rm d}{{\rm d}t}\cEl_{21} \bm{A}_k(t) = & - \cAl_{22}^*\cAl_{22}^{} \bm{i}_k(t) + \bm{v}(t),\\
\cEl_{11}^* \cEl_{11}^{} \bm{A}_k(0)=&\,\cEl_{11}^* \cEl_{11}^{} \bm{A}_0,\\
\cEl_{21} \bm{A}_k(0)=&\,\cEl_{21} \bm{A}_0
\end{aligned}
\end{equation}
for $k=1$, $2$. Resolving the second equation in \eqref{eq:opdae} for
\begin{equation}
\bm{i}_k(t) = - \tfrac{\rm d}{{\rm d}t}(\cAl_{22}^*\cAl_{22}^{})^{-1}\cEl_{21}^{}\bm{A}_k(t) +
(\cAl_{22}^*\cAl_{22}^{})^{-1}\bm{v}(t)
\label{eq:iM}
\end{equation}
 and substituting it into the first one, we obtain, for $k=1,2$,
\begin{equation}
\tfrac{\rm d}{{\rm d}t}\bigl(\cEl_{11}^* \cEl_{11}^{}\! + \cEl_{21}^* (\cAl_{22}^*\cAl_{22}^{})^{-1}\cEl_{21}^{}\bigr)\bm{A}_k(t) =- \cAl_{11}^{}(\bm{A}_k(t))\, +\, \cEl_{21}^* (\cAl_{22}^*\cAl_{22}^{})^{-1}\bm{v}(t).
\label{eq:schur}
\end{equation}
Using \eqref{eq:schur} and Lemma~\ref{lem:A11mon},  we obtain for \mbox{$0<t_0\leq t\leq T$} that
\begin{equation}\begin{array}{l}
\displaystyle{\int_{{t_0}}^{{t}}\bigl\langle \bm{A}_1(\tau)-\bm{A}_2(\tau), \tfrac{\rm d}{{\rm d}\tau}(\cEl_{11}^*\cEl_{11}^{} + \cEl_{21}^* (\cAl_{22}^*\cAl_{22}^{})^{-1} \cEl_{21}^{}) (\bm{A}_1(\tau)-\bm{A}_2(\tau))\bigr\rangle\, {\rm d}\tau}\\
\displaystyle{\qquad\quad = - \int_{{t_0}}^{{t}}\bigl\langle \bm{A}_1(\tau)-\bm{A}_2(\tau), \cAl_{11}(\bm{A}_1(\tau))-\cAl_{11}(\bm{A}_2(\tau))\bigr\rangle \,{\rm d}\tau}\\
\displaystyle{\qquad\quad \leq -  m_\nu \int_{{t_0}}^{{t}}\|\nabla\times ({\bm{A}}_1(\tau)-{\bm{A}}_2(\tau))\|_{L^2(\mathit{\Omega};\mathbb{R}^3)}^2 \,{\rm d}\tau.}
\end{array}
\label{eq:est2}
\end{equation}
On the other hand, the self-adjointness of $\cEl_{11}^* \cEl_{11}^{}$ and $\cAl_{22}^*\cAl_{22}^{}$ together with the product rule imply that
\begin{equation}
\begin{array}{l}
\displaystyle{\int_{{t_0}}^{{t}}\bigl\langle \bm{A}_1(\tau)-\bm{A}_2(\tau), \tfrac{\rm d}{{\rm d}\tau}(\cEl_{11}^* \cEl_{11}^{} + \cEl_{21}^* (\cAl_{22}^*\cAl_{22}^{})^{-1}\cEl_{21}^{} ) (\bm{A}_1(\tau)-\bm{A}_2(\tau))\bigr\rangle\, {\rm d}\tau}\\[4mm]
\displaystyle{\quad =\frac12\bigl\langle \bm{A}_1(t)-\bm{A}_2(t), (\cEl_{11}^* \cEl_{11}^{} + \cEl_{21}^* (\cAl_{22}^*\cAl_{22}^{})^{-1}\cEl_{21}^{}) (\bm{A}_1(t)-\bm{A}_2(t))\bigr\rangle}\\[4mm]
\displaystyle{\qquad-\frac12\bigl\langle \bm{A}_1(t_0)-\bm{A}_2(t_0), (\cEl_{11}^* \cEl_{11}^{} + \cEl_{21}^*(\cAl_{22}^*\cAl_{22}^{})^{-1}\cEl_{21}) (\bm{A}_1(t_0)-\bm{A}_2(t_0))\bigr\rangle}\\[4mm]
\displaystyle{\quad =\frac{\sigma_C}2\!\int_{\mathit{\Omega}_C}\|\bm{A}_1(t)-\bm{A}_2(t)\|_2^2\,{\rm d}\xi+\frac12\Bigr\|R^{-1/2}\int_{\mathit{\Omega}}\!\chi^\top(\bm{A}_1(t)-\bm{A}_2(T))\,{\rm d}\xi\Bigl\|_2^2}\\[4mm]
\displaystyle{\qquad -\frac{\sigma_C}2\!\int_{\mathit{\Omega}_C}\|\bm{A}_1(t_0)-\bm{A}_2(t_0)\|_2^2\,{\rm d}\xi-\frac{1}2\Bigl\|R^{-1/2}\int_{\mathit{\Omega}}\!\chi^\top(\bm{A}_1(t_0)-\bm{A}_2(t_0))\,{\rm d}\xi\Bigr\|_2^2.}
\end{array}\label{eq:est3}\end{equation}
Since
$\bm{A}_1$ and $\bm{A}_2$ satisfy the continuity properties in Definition~\ref{def:sol}~\ref{item:sol1}),
\ref{item:sol2})
and the initial conditions
\[\begin{aligned}
& (\sigma\bm{A}_1)(0)=\, \sigma\bm{A}_0=(\sigma\bm{A}_2)(0) , \\
& \int_{\mathit{\Omega}}\chi^\top\bm{A}_1(0)\,{\rm d}\xi=\,\int_{\mathit{\Omega}}\chi^\top\bm{A}_0\,{\rm d}\xi=\int_{\mathit{\Omega}}\chi^\top\bm{A}_2(0)\,{\rm d}\xi, \qquad
\end{aligned}
\]
then taking the limit ${t_0}\to 0+$, we obtain from \eqref{eq:est2} and \eqref{eq:est3} that
\[\begin{aligned}
\frac{\sigma_C}2\int_{\mathit{\Omega}_C}&\|\bm{A}_1(t)-\bm{A}_2(t)\|_2^2\,{\rm d}\xi+\frac{1}2\Bigl\|R^{-1/2}\int_{\mathit{\Omega}}\chi^\top(\bm{A}_1(t)-\bm{A}_2(t))\,{\rm d}\xi\Bigr\|_2^2\\
&\qquad \leq -  m_\nu \int_{0}^{t}\|\nabla\times ({\bm{A}}_1(\tau)-{\bm{A}}_2(\tau))\|_{L^2(\mathit{\Omega};\mathbb{R}^3)}^2 \,{\rm d}\tau \leq 0.
\end{aligned}\]
Since the left-hand side is nonnegative and the right-hand side is nonpositive, we obtain that
$\|{\bm{A}}_1(t)-{\bm{A}}_2(t)\|_{L^2(\mathit{\Omega}_C;\mathbb{R}^3)}=0$ and $\|\nabla\times ({\bm{A}}_1(t)-{\bm{A}}_2(t))\|_{L^2(\mathit{\Omega};\mathbb{R}^3)}=0$ for almost all $t\in [0,T]$.
Then the estimate \eqref{eq:curlest} in Lemma~\ref{lem:denscoerc} implies \mbox{$\bm{A}_1(t)=\bm{A}_2(t)$} for almost all $t\in [0,T]$.
As a consequence, we obtain from \eqref{eq:iM} that \mbox{$\bm{i}_1(t)=\bm{i}_2(t)$} for almost all $t\in{[0,T]}$. This completes the proof.
\end{proof}

\section{The Magnetic Energy}
\label{sec:energy}

Our existence and regularity results for the coupled MQS model \eqref{eq:MQS} rely on the observation that this model is a~special instance of a~differential-algebraic gradient system. In this system, the magnetic energy  plays a~central role. Therefore, in the following, we define the magnetic energy for the coupled MQS system \eqref{eq:MQS} and collect some of its properties.

First, however, we introduce some useful general concepts and notation.

\begin{definition}\label{def:phi}
Let $X$ and $Z$ be Hilbert spaces, and let $\varphi:X\to \mathbb{R}\cup\{\infty\}$ be a~function with values in the extended real line. We call
$D(\varphi)= \varphi^{-1}[0,\infty)$
the {\em effective domain} of $\varphi$, and we say that $\varphi$ is {\em proper}, if its effective domain is nonempty. The function $\varphi$ is {\em convex}, if
\[
\forall\,x_1,x_2\in X, \lambda\in [0,1]:\;\; \varphi(\lambda x_1+(1-\lambda)x_2)\leq \lambda \varphi(x_1)+(1-\lambda)\varphi(x_2) ,
\]
and it is {\em lower semicontinuous}, if for every $\lambda\in\mathbb{R}$, the sublevel set $\varphi^{-1}[0,\lambda]$ is closed in~$X$. We say that $\varphi$ is {\em coercive} if for every $\lambda\in\mathbb{R}$ the sublevel set  $\varphi^{-1}[0,\lambda]$ is bounded.
Finally, given $\mathcal{E}\in \mathcal{L}(X,Z)$, we say that $\varphi$ is {\em $\mathcal{E}$-elliptic} if there exists $\omega\in\mathbb{R}$ such that the shifted functional
\[
\begin{aligned}
\varphi_\omega:&&X\to &\,\mathbb{R} \cup\{\infty\},\\&&x\mapsto&\,\tfrac\omega2\|\mathcal{E} x\|_Z^2+\varphi(x)
\end{aligned}
\]
is convex and coercive.

The~relation
    \[
    \partial\varphi=\!\left\{ (x,q)\!\in\! X\!\times\! X \; : \; \displaystyle{x\!\in\! D(\varphi) \text{ and } \lim_{\lambda\searrow0}\frac{\varphi(x\!+\!\lambda v)\!-\!\varphi(x)}{\lambda}\geq \langle q,v\rangle_X \text{ for all }
    v\!\in\! X}\right\}
    \]
on $X$ is called {\em subgradient of $\varphi$}. For $x\in X$, we write
\[
\partial\varphi(x)=\left\{q\in X\enskip :\enskip (x,q)\in \partial\varphi\right\} ,
\]
and we call
\[
D(\partial\varphi)=\left\{x\in X\enskip : \enskip \exists\; q\in X\text{ such that }(x,q)\in\partial\varphi\right\}
\]
the {\em domain} of the subgradient $\partial\varphi$.
\end{definition}

Now, starting from the magnetic reluctivity
$\nu$ as in Assumption~\ref{ass:material}~b), consider the~function $\vartheta:\mathit{\Omega}\times\mathbb{R}_{\ge0}\to\mathbb{R}_{\ge0}$ defined by
\begin{equation}
\vartheta(\xi,\varrho)=\frac{1}{2}\int_0^\varrho \nu(\xi,\sqrt{\zeta})\, {\rm d}\zeta = \int_0^{\sqrt{\varrho}} \nu(\xi,\zeta)\zeta\,{\rm d}\zeta.
\label{eq:gamma}
\end{equation}
Using this function, we further define the functional $E:X(\mathit{\Omega},\mathit{\Omega}_C)\to \mathbb{R} \cup\{\infty\}$ by
\begin{equation}\label{eq:varphiA}
E(\bm{A}{})=\begin{cases}\displaystyle{\int_{\mathit{\Omega}} \vartheta\bigl(\xi,\|\nabla\times \bm{A}(\xi )\|_2^2\bigr)\,{\rm d}\xi}\; & \text{if } \bm{A}{}\in X_0(\curl,\mathit{\Omega},\mathit{\Omega}_C),\\
\infty\; & \text{else}.\end{cases}
\end{equation}
Note that for the magnetic vector potential $\bm{A}$, the function
$\vartheta\bigl(\xi,\|\nabla\times \bm{A}(\xi,t)\|_2^2\bigr)$ is the {\em magnetic energy density}, and
$E$ describes
the {\em magnetic energy} \cite{HauM89}. This is a~special kind of energy function in the sense that its effective domain \mbox {$D(E)= X_0(\curl,\mathit{\Omega},\mathit{\Omega}_C)$} is a~vector space, actually a~Hilbert space with the natural inner product induced from $H_0(\curl,\mathit{\Omega})$.

In the following, we collect some properties of the magnetic energy, where we further use the notions of {\em G\^{a}teaux differentiability} and {\em G\^{a}teaux derivative} as introduced in \cite[Definition~4.5]{Zeid86}.

\begin{proposition}[Properties of the magnetic energy function]\label{prop:enfun}
Let \mbox{$\mathit{\Omega}\subset\mathbb{R}^3$} with subdomains $\mathit{\Omega}_C$ and $\mathit{\Omega}_I$ satisfy Assumption~\textup{\ref{ass:omega}}, and let $X(\mathit{\Omega},\mathit{\Omega}_C)$ and $X_0(\curl,\mathit{\Omega},\mathit{\Omega}_C)$ be as in \eqref{eq:statespace} and \eqref{eq:statespace2}, respectively. Further, let the material parameters satisfy Assumption~\textup{\ref{ass:material}},
let  $\vartheta$ be as in \eqref{eq:gamma}, and let the magnetic energy $E$ be defined as in~\eqref{eq:varphiA}. Then the following statements hold: \vspace*{-1mm}
\begin{enumerate}[a)]
\item\label{prop:enfun-1} For all $\bm{A}_1,\bm{A}_2\in {D(E)}$,
$$
\arraycolsep=2pt
\begin{array}{ll}
|{E}(&\!\bm{A}_1)-{E}(\bm{A}_2)|\\
& \leq\displaystyle{\frac{L_\nu}2\bigl(\|\nabla\times\bm{A}_1\|_{L^2(\mathit{\Omega};\mathbb{R}^3)}+\|\nabla\times\bm{A}_2\|_{L^2(\mathit{\Omega};\mathbb{R}^3)}\bigr)\, \|\nabla\times(\bm{A}_1-\bm{A}_2)\|_{L^2(\mathit{\Omega};\mathbb{R}^3)},}
\end{array}
$$
where $L_\nu$ is the Lipschitz constant of $\nu$.

\item\label{prop:enfun-2} For all $\bm{A}\in {D(E)}$,
\begin{equation}\label{eq:estE}
\frac{m_\nu}2\|\nabla\times\bm{A}\|^2_{L^2(\mathit{\Omega};\mathbb{R}^3)} \leq {E}(\bm{A})
\leq \frac{L_\nu}2\|\nabla\times\bm{A}\|^2_{L^2(\mathit{\Omega};\mathbb{R}^3)},
\end{equation}
where $m_\nu$ and $L_\nu$ are the monotonicity and Lipschitz constants of $\nu$.

\item\label{prop:enfun-0} If $E$ is considered as a~mapping from the Hilbert space $X_0(\curl,\mathit{\Omega},\mathit{\Omega}_C)$ to $\R$, then $E$ is G\^{a}teaux differentiable, and for all $\bm{A}\in D(E)=X_0(\curl,\mathit{\Omega},\mathit{\Omega}_C)$,
\begin{equation}
\forall\,\bm{F}\in X_0(\curl,\mathit{\Omega},\mathit{\Omega}_C):\;\; \langle\bm{F},{\rm D}E(\bm{A})\rangle
= \int_{\mathit{\Omega}} \nu (\cdot, \| \nabla\times \bm{A}\|_2) \, (\nabla\times \bm{A}) \cdot (\nabla\times \bm{F})\; d\xi,
\end{equation}
where ${\rm D}E(\bm{A})\in X_0(\curl,\mathit{\Omega},\mathit{\Omega}_C)'$ denotes the G\^{a}teaux derivative of $E$ at $\bm{A}$.

\item\label{prop:enfun1} The magnetic energy $E$ is convex and lower semicontinuous. Its effective domain $D(E)=X_0(\curl,\mathit{\Omega},\mathit{\Omega}_C)$ is dense in $X(\mathit{\Omega},\mathit{\Omega}_C)$.

\item\label{prop:enfun3} The subgradient of $E$ is given by
\begin{multline*}
\partial E=\Bigl\{(\bm{A},\bm{C})\in X(\mathit{\Omega},\mathit{\Omega}_C)
\times X(\mathit{\Omega},\mathit{\Omega}_C)\enskip:\enskip \bm{A}\in X_0(\curl,\mathit{\Omega},\mathit{\Omega}_C)
\text{ and} \Bigr.\\
\Bigl.  \int_{\mathit{\Omega}} \nu(\cdot,\|\nabla \times \bm{A}\|_2) \, (\nabla \times \bm{A}) \cdot (\nabla\times \bm{F} )\,{\rm d}\xi = \int_{\mathit{\Omega}} \bm{C} \cdot \bm{F}\,{\rm d}\xi \Bigr. \\
\Bigl. \text{ for all } \bm{F} \in X_0(\curl,\mathit{\Omega},\mathit{\Omega}_C) \Bigr\}.
\end{multline*}
The subgradient $\partial E$ is single-valued and its domain $D(\partial E)$ is dense in $X(\mathit{\Omega},\mathit{\Omega}_C)$.

\item\label{prop:enfun5}
Let $\mathcal{E}\in \mathcal{L}(X(\mathit{\Omega},\mathit{\Omega}_C),X(\mathit{\Omega},\mathit{\Omega}_C)\times \R^m)$ be defined as
\begin{equation}\label{eq:operatorE}
\mathcal{E}\bm{A}=\Bigl(\sqrt{\sigma}\bm{A},R^{-1/2}\int_\mathit{\Omega} \chi^\top\bm{A}\, {\rm d}\xi\Bigr),
\end{equation}
where $R^{-1/2}$ denotes the inverse of the principal square root of $R$.
Then $ E$ is \mbox{$\mathcal{E}$-elliptic}.
\end{enumerate}
\end{proposition}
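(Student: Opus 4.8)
\textbf{Proof plan for Proposition~\ref{prop:enfun}~\ref{prop:enfun5}).}
The goal is to exhibit a shift parameter $\omega\in\R$ such that $\varphi_\omega(\bm{A}) = \tfrac\omega2\|\mathcal{E}\bm{A}\|^2_{X(\mathit{\Omega},\mathit{\Omega}_C)\times\R^m} + E(\bm{A})$ is convex and coercive on $X(\mathit{\Omega},\mathit{\Omega}_C)$. Convexity is the cheap part: by part~\ref{prop:enfun1}) the functional $E$ is already convex, and $\bm{A}\mapsto\tfrac\omega2\|\mathcal{E}\bm{A}\|^2$ is convex as soon as $\omega\ge0$ since $\mathcal{E}$ is linear; so any $\omega\ge0$ will do for convexity, and I will simply pick $\omega>0$ at the end. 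The substance is coercivity, i.e.\ boundedness of the sublevel sets $\varphi_\omega^{-1}[0,\lambda]$ in $X(\mathit{\Omega},\mathit{\Omega}_C)$.

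First I would unwind the norm: by the definition \eqref{eq:operatorE} of $\mathcal{E}$ and Assumption~\ref{ass:material}~\ref{ass:resistance}), there is a constant $c_R>0$ (depending on the smallest eigenvalue of $R^{-1}$) with $\|\mathcal{E}\bm{A}\|^2 \ge \sigma_C\|\bm{A}\|^2_{L^2(\mathit{\Omega}_C;\R^3)} \ge 0$, and in particular $\tfrac\omega2\|\mathcal{E}\bm{A}\|^2 \ge \tfrac{\omega\sigma_C}2\|\bm{A}\|^2_{L^2(\mathit{\Omega}_C;\R^3)}$. Combining this with the lower bound in part~\ref{prop:enfun-2}), for every $\bm{A}\in D(E)=X_0(\curl,\mathit{\Omega},\mathit{\Omega}_C)$ I get
\[
\varphi_\omega(\bm{A}) \ \ge\ \frac{\omega\sigma_C}2\|\bm{A}\|^2_{L^2(\mathit{\Omega}_C;\R^3)} + \frac{m_\nu}2\|\nabla\times\bm{A}\|^2_{L^2(\mathit{\Omega};\R^3)}.
\]
Now I invoke the Poincaré-type estimate \eqref{eq:curlest} from Lemma~\ref{lem:denscoerc}: there is $L_C>0$ with $\|\bm{A}\|^2_{L^2(\mathit{\Omega};\R^3)}\le L_C\bigl(\|\bm{A}\|^2_{L^2(\mathit{\Omega}_C;\R^3)}+\|\nabla\times\bm{A}\|^2_{L^2(\mathit{\Omega};\R^3)}\bigr)$ for all $\bm{A}\in X_0(\curl,\mathit{\Omega},\mathit{\Omega}_C)$. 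Choosing $\omega$ large enough that $\min\{\omega\sigma_C/2,\ m_\nu/2\}\ge 1/L_C$ — for instance $\omega = \max\{2/(L_C\sigma_C),\ 2/(L_Cm_\nu)\}$, which is positive — yields $\varphi_\omega(\bm{A})\ge \|\bm{A}\|^2_{L^2(\mathit{\Omega};\R^3)}$ for all $\bm{A}\in D(E)$, while $\varphi_\omega(\bm{A})=\infty$ outside $D(E)$. Hence every sublevel set $\varphi_\omega^{-1}[0,\lambda]$ is contained in the ball of radius $\sqrt\lambda$ in $X(\mathit{\Omega},\mathit{\Omega}_C)$, so $\varphi_\omega$ is coercive; and since $\omega>0$, $\varphi_\omega$ is also convex. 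Therefore $E$ is $\mathcal{E}$-elliptic.

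The only point requiring genuine care — and the step I expect to be the main obstacle — is keeping track of whether the coercivity inequality must hold on all of $X(\mathit{\Omega},\mathit{\Omega}_C)$ or only on the effective domain; the resolution is that on the complement of $D(E)$ the functional is $+\infty$, so sublevel sets automatically ignore it, and on $D(E)=X_0(\curl,\mathit{\Omega},\mathit{\Omega}_C)$ the estimate \eqref{eq:curlest} of Lemma~\ref{lem:denscoerc} applies verbatim. Everything else — bounding $\|\mathcal{E}\bm{A}\|^2$ below by $\sigma_C\|\bm{A}\|^2_{L^2(\mathit{\Omega}_C)}$, invoking part~\ref{prop:enfun-2}), and choosing $\omega$ — is routine, so I would present it compactly.
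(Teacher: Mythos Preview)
Your proof is correct and follows essentially the same route as the paper: convexity from the sum of convex functionals, and coercivity by combining the lower bound $E(\bm{A})\ge\frac{m_\nu}{2}\|\nabla\times\bm{A}\|^2$ from part~\ref{prop:enfun-2}) with the estimate $\|\mathcal{E}\bm{A}\|^2\ge\sigma_C\|\bm{A}\|^2_{L^2(\mathit{\Omega}_C;\R^3)}$ and then invoking Lemma~\ref{lem:denscoerc}. The only cosmetic difference is that the paper observes coercivity holds for \emph{every} $\omega>0$ (with constant $c=\min\{m_\nu,\omega\sigma_C\}/(2L_C)$) rather than fixing one sufficiently large $\omega$, but since $\mathcal{E}$-ellipticity requires only the existence of some $\omega$, your version is equally valid.
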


\begin{proof}
\ref{prop:enfun-1}) Let  $\bm{A}_1$, $\bm{A}_2\in {D(E)}$. Then using
\eqref{eq:Mnugreater} and the Cauchy-Schwarz inequa\-li\-ty, we obtain
{\allowdisplaybreaks\begin{align*}
|E(\bm{A}_1)\,-&\,E(\bm{A}_2)|\\
\leq \; &\int_\mathit{\Omega}\left|\int_0^{\|\nabla\times\bm{A}_1(\xi)\|_2} \nu(\xi,\zeta)\zeta\,{\rm d}\zeta-\int_0^{\|\nabla\times\bm{A}_2(\xi)\|_2} \nu(\xi,\zeta)\zeta\,{\rm d}\zeta  \right|\,{\rm d}\xi\\
= \;&\int_\mathit{\Omega}\left|\int_{\|\nabla\times\bm{A}_2(\xi)\|_2}^{\|\nabla\times\bm{A}_1(\xi)\|_2} \nu(\xi,\zeta)\zeta\,{\rm d}\zeta\right|\,{\rm d}\xi\\
\leq\;&
L_\nu\int_\mathit{\Omega}\left|\int_{\|\nabla\times\bm{A}_2(\xi)\|_2}^{\|\nabla\times\bm{A}_1(\xi)\|_2}\zeta\,{\rm d}\zeta\right|\,{\rm d}\xi\\
=\;&\frac{L_\nu}2\int_\mathit{\Omega}\Bigl|\|\nabla\times\bm{A}_1(\xi)\|_2^2-\|\nabla\times\bm{A}_2(\xi)\|_2^2\Bigr|\,{\rm d}\xi\\
=\;&\frac{L_\nu}2\!\!\int_\mathit{\Omega}\bigl(\|\nabla\!\times\!\bm{A}_1(\xi)\|_2\!+\!\|\nabla\!\times\!\bm{A}_2(\xi)\|_2\bigr)\bigl|\,\|\nabla\!\times\!\bm{A}_1(\xi)\|_2\!-\!\|\nabla\!\times\!\bm{A}_2(\xi)\|_2\,\bigr|\,{\rm d}\xi\\
\leq\;&\frac{L_\nu}2\int_\mathit{\Omega}\bigl(\|\nabla\times\bm{A}_1(\xi)\|_2+\|\nabla\times\bm{A}_2(\xi)\|_2\bigr)\, \|\nabla\times(\bm{A}_1(\xi)-\bm{A}_2(\xi))\|_2\,{\rm d}\xi\\
\leq\;&\frac{L_\nu}2\bigl(\|\nabla\times\bm{A}_1\|_{L^2(\mathit{\Omega};\mathbb{R}^3)}+\|\nabla\times\bm{A}_2\|_{L^2(\mathit{\Omega};\mathbb{R}^3)}\bigr)\, \|\nabla\times(\bm{A}_1-\bm{A}_2)\|_{L^2(\mathit{\Omega};\mathbb{R}^3)}.
\end{align*}}

\ref{prop:enfun-2})
{This statement follows immediately from \eqref{eq:Mnugreater}.}

\ref{prop:enfun-0}) The proof of this assertion is perhaps tedious but elementary; it mainly relies on the differentiability of $\vartheta$ and growth estimates of $\nu$ (compare with \eqref{eq:Mnugreater}). We omit the details.

\ref{prop:enfun1}) By assumption, for almost every $\xi\in\mathit{\Omega}$, the function $\zeta\mapsto\nu (\xi , \zeta ) \zeta$ is positive and increasing on $\mathbb{R}_{\geq 0}$. As a consequence, by definition of $\vartheta$, for almost every $\xi\in\mathit{\Omega}$, the function $\varrho \mapsto \vartheta (\xi , \varrho^2)$ is increasing and convex on $\mathbb{R}_{\geq 0}$. Using these properties and the triangle inequality for the norm, we can easily show the convexity of $E$. Indeed, for all $\bm{A}_1$, $\bm{A}_2\in D(E)$ and all $\lambda\in [0,1]$, we have
\begin{align*}
E(\lambda\bm{A}_1+(1-\lambda) \bm{A}_2) & = \int_{\mathit{\Omega}} \vartheta (\xi , \| \nabla\times (\lambda\bm{A}_1(\xi)+(1-\lambda) \bm{A}_2(\xi))\|_2^2 ) \; {\rm d}\xi \\
& \leq \int_{\mathit{\Omega}} \vartheta (\xi , ( \lambda \| \nabla\times\bm{A}_1(\xi) \|_2 + (1-\lambda) \| \nabla\times\bm{A}_2(\xi)\|_2 )^2 )\; {\rm d}\xi \\
& \leq \int_{\mathit{\Omega}} ( \lambda \vartheta (\xi , \| \nabla\times\bm{A}_1(\xi) \|_2^2 ) + (1-\lambda) \vartheta (\xi , \| \nabla\times\bm{A}_2(\xi)\|_2^2 ) ) \; {\rm d}\xi \\
& = \lambda E (\bm{A}_1 ) + (1-\lambda ) E(\bm{A}_2 ) .
\end{align*}
In order to prove lower semicontinuity of $E$, let $\lambda\in\mathbb{R}$. From assertion \ref{prop:enfun-1}), or alternatively from \ref{prop:enfun-0}), we see that the magnetic energy is continuous as a mapping from the Hilbert space $X_0(\curl,\mathit{\Omega},\mathit{\Omega}_C)$ to $\R$, and, therefore, the sublevel set
$$
\{ \bm{A}\in X(\mathit{\Omega},\mathit{\Omega}_C)\; |\; E (\bm{A}) + \| \bm{A}\|_{L^2(\mathit{\Omega};\mathbb{R}^3)}^2 \leq \lambda \}
$$ is closed in that space. By convexity of~$E$, this set is also convex, and hence, by Mazur's theorem, this set is weakly closed. Further, by \eqref{eq:estE}, the sublevel set is bounded in $X_0(\curl,\mathit{\Omega},\mathit{\Omega}_C)$, and hence, since every Hilbert space is reflexive, it is weakly compact. By continuity of the embedding of $X_0(\curl,\mathit{\Omega},\mathit{\Omega}_C)$ into $X(\mathit{\Omega},\mathit{\Omega}_C)$, it follows that this sublevel set is also weakly compact in the latter space, and then it is necessarily norm closed there. We have thus proved that the mapping $\bm{A}\mapsto E(\bm{A}) + \| \bm{A}\|_{L^2(\mathit{\Omega};\mathbb{R}^3)}^2$ is lower semicontinuous on $X(\mathit{\Omega},\mathit{\Omega}_C)$. Since $\|\cdot \|_{L^2(\mathit{\Omega};\mathbb{R}^3)}^2$ is continuous on that space, it follows that the magnetic energy itself is lower semicontinuous. The fact that $E$ is densely defined follows from Lemma~\ref{lem:denscoerc}.

Assertion \ref{prop:enfun3}) is a direct consequence of the definition of the subgradient, the fact that the effective domain of $E$ is a linear space, and from assertion \ref{prop:enfun-0}). By \cite[Proposition~1.6]{Bar10}, $D(\partial E)$ and $D( E)$ have the same closure, and hence the subgradient is densely defined, too.

\ref{prop:enfun5}) Let $Z=X(\mathit{\Omega},\mathit{\Omega}_C)\times \R^m$ and let $\mathcal{E}\in\mathcal{L}(X(\mathit{\Omega},\mathit{\Omega}_C),Z)$ be as in \eqref{eq:operatorE}. First, note that $\mathcal{E}$ is well-defined, since by Assumption~\ref{ass:material}~c) the matrix $R$ is symmetric and positive definite. In order to prove that $ E$ is $\mathcal{E}$-elliptic, we show that the shifted functional
$$
\arraycolsep=2pt
\begin{array}{rcl}
 E_{\omega}:X(\mathit{\Omega},\mathit{\Omega}_C)&\to& \mathbb{R}_{\ge0}\cup\{\infty\},\\
 {\bm{A}}& \mapsto & \displaystyle{\tfrac\omega2\|\mathcal{E}{\bm{A}}\|_Z^2+ E({\bm{A}})}
\end{array}
$$
is convex and coercive for all $\omega>0$. Convexity follows from the fact that $E_\omega$ is the sum of two convex functions. To show coercivity, we
use Lemma~\ref{lem:denscoerc}, which states that there exists some $L_C>0$ such that \eqref{eq:curlest} holds for all $\bm{A}\in D( E)$.  Further, by using \eqref{eq:estE}, we obtain
\[\begin{aligned}
{ E_{\omega}}(\bm{A})=&\,{ E}(\bm{A})+\frac{\omega}{2} \|\mathcal{E}\bm{A}\|_{Z}^2\\
=&\;{ E}(\bm{A})+\frac{\omega\sigma_C}{2} \int_{\mathit{\Omega}_C} \|\bm{A}\|_2^2\,{\rm d}\xi +
\frac{\omega}{2} \Bigl\|R^{-1/2}\int_{\mathit{\Omega}} \chi^\top\bm{A}\,{\rm d}\xi\Bigr\|_2^2\\
\geq&\; \frac{m_\nu}2\,\|\nabla\times\bm{A}\|^2_{L^2(\mathit{\Omega};\mathbb{R}^3)}+
\frac{\omega\sigma_C}{2} \|\bm{A}\|_{L^2(\mathit{\Omega}_C;\mathbb{R}^3)}^2\\
\geq&\;c\,\|\bm{A}\|^2_{L^2(\mathit{\Omega};\mathbb{R}^3)}
\end{aligned}\]
with ${c}=\min\{m_\nu,\omega\sigma_C\}/(2L_C)$. This implies the coercivity of $E_\omega$.
\end{proof}

\section{On a~Class of Abstract Differential-Algebraic Gradient Systems}
\label{sec:gradsys}

In this section, we study the solvability of an~abstract differential-algebraic gradient system
 \begin{equation} \label{eq:symmdae}
\mathcal{E}^*f(t)-\mathcal{E}^*\tfrac{\rm d}{{\rm d}t}\mathcal{E} x(t)\in \partial\varphi(x(t)),
\quad\mathcal{E} x(0+)=z_0,
\end{equation}
where $\mathcal{E}\in\mathcal{L}(X,Z)$, $X$ and $Z$ are Hilbert spaces, $f\in L^2([0,T],Z)$,  $\varphi$ is a~densely defined, convex, lower semicontinuous and $\mathcal{E}$-elliptic functional with a~subgradient $\partial\varphi$, and $z_0\in Z$.
We present an~extension of some results from  \cite{Bar10,Bre73} to this more general class of gradient systems, which will be useful in establishing the existence of solutions of the MQS system \eqref{eq:MQS}.

We start by proving an auxiliary lemma providing the concept of $\mathcal{E}$-subgradients.

\begin{lemma}\label{lem:modfun}
Let $X$ and $Z$ be Hilbert spaces, and let $\mathcal{E}\in\mathcal{L}(X,Z)$ have a~dense range. Assume that the functional $\varphi:X\to\mathbb{R} \cup\{\infty\}$ is densely defined, convex, lower semicontinuous and $\mathcal{E}$-elliptic with a~subgradient $\partial\varphi$. Define the functional
 \begin{equation}
\arraycolsep=2pt
\begin{array}{rcl}
\varphi_{\mathcal{E}}: Z & \to &\R, \\
z & \mapsto & \displaystyle{\inf_{x\in \mathcal{E}^{-1}\{z\}}\varphi(x)} .
\end{array}
\label{eq:modfun}
\end{equation}
Then $D(\varphi_{\mathcal{E}})={\mathcal{E}}D(\varphi)$, and $\varphi_{\mathcal{E}}$ is densely defined, convex and lower semicontinuous. Its subgradient is given by
\[
\begin{array}{rl}
\partial\varphi_{\mathcal{E}}=\Bigl\{ (z,g)\in Z\times Z\enskip : \enskip \Bigr.&
\exists\, x\in D(\varphi)\text{ such that }\mathcal{E} x=z \text{ and }\\
& \Bigl.\displaystyle{\lim_{\lambda\searrow0}\frac{\varphi(x+\lambda v)-\varphi(x)}{\lambda}\geq \langle g,\mathcal{E} v\rangle_Z} \text{ for all } v\in X\Bigr\}.
\end{array}
\]
In particular, $(z,g)\in\partial\varphi_{\mathcal{E}}$ if and only if there exists $(x,q)\in\partial\varphi$
such that $\mathcal{E} x=z$, $\mathcal{E}^*g=q$, and $\varphi(x)=\varphi_{\mathcal{E}}(z)$.
\end{lemma}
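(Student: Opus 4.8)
The plan is to analyze $\varphi_{\mathcal{E}}$ in four stages: (i) identify its effective domain and establish density; (ii) prove convexity; (iii) prove lower semicontinuity, which I expect to be the main obstacle; and (iv) compute the subgradient. For (i), the identity $D(\varphi_{\mathcal{E}}) = \mathcal{E}D(\varphi)$ is essentially immediate from the definition: $z \in D(\varphi_{\mathcal{E}})$ means the infimum over $\mathcal{E}^{-1}\{z\}$ of $\varphi$ is finite, which (since $\varphi$ takes values in $\mathbb{R}\cup\{\infty\}$ and is bounded below on bounded sets by $\mathcal{E}$-ellipticity once we subtract the quadratic shift) forces $\mathcal{E}^{-1}\{z\}$ to meet $D(\varphi)$, and conversely any $z \in \mathcal{E}D(\varphi)$ clearly lies in $D(\varphi_{\mathcal{E}})$. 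Density of $D(\varphi_{\mathcal{E}})$ in $Z$ then follows because $D(\varphi)$ is dense in $X$ and $\mathcal{E}$ has dense range, so $\mathcal{E}D(\varphi)$ is dense in $\mathcal{E}X$ which is dense in $Z$.

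For convexity, given $z_1, z_2 \in D(\varphi_{\mathcal{E}})$ and $\lambda \in [0,1]$, I would take near-minimizers $x_i \in \mathcal{E}^{-1}\{z_i\}$ with $\varphi(x_i) \le \varphi_{\mathcal{E}}(z_i) + \epsilon$; then $\lambda x_1 + (1-\lambda)x_2 \in \mathcal{E}^{-1}\{\lambda z_1 + (1-\lambda) z_2\}$ by linearity of $\mathcal{E}$, so $\varphi_{\mathcal{E}}(\lambda z_1 + (1-\lambda)z_2) \le \varphi(\lambda x_1 + (1-\lambda)x_2) \le \lambda \varphi(x_1) + (1-\lambda)\varphi(x_2) \le \lambda \varphi_{\mathcal{E}}(z_1) + (1-\lambda)\varphi_{\mathcal{E}}(z_2) + \epsilon$, and let $\epsilon \searrow 0$.

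The hard part is lower semicontinuity, because the infimum defining $\varphi_{\mathcal{E}}$ need not be attained a priori and $\mathcal{E}$ need not be injective, so a naive diagonal argument fails. Here the $\mathcal{E}$-ellipticity of $\varphi$ is essential: fix $\omega$ such that $\varphi_\omega(x) = \frac{\omega}{2}\|\mathcal{E}x\|_Z^2 + \varphi(x)$ is convex and coercive. Note that $(\varphi_\omega)_{\mathcal{E}}(z) = \frac{\omega}{2}\|z\|_Z^2 + \varphi_{\mathcal{E}}(z)$, since the quadratic term is constant on each fiber $\mathcal{E}^{-1}\{z\}$; so it suffices to show $(\varphi_\omega)_{\mathcal{E}}$ is lower semicontinuous, and then subtract the continuous term $\frac{\omega}{2}\|\cdot\|_Z^2$. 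For a sublevel set $\{z : (\varphi_\omega)_{\mathcal{E}}(z) \le \mu\}$, given $z_n \to z$ with $(\varphi_\omega)_{\mathcal{E}}(z_n) \le \mu + 1/n$, pick $x_n \in \mathcal{E}^{-1}\{z_n\}$ with $\varphi_\omega(x_n) \le \mu + 2/n$; by coercivity of $\varphi_\omega$ the sequence $(x_n)$ is bounded in $X$, hence has a weakly convergent subsequence $x_{n_k} \rightharpoonup x$; since $\mathcal{E}$ is weakly continuous, $\mathcal{E}x = \lim \mathcal{E}x_{n_k} = z$; and since $\varphi_\omega$ is convex and lower semicontinuous it is weakly lower semicontinuous (Mazur's theorem), giving $\varphi_\omega(x) \le \liminf \varphi_\omega(x_{n_k}) \le \mu$, hence $(\varphi_\omega)_{\mathcal{E}}(z) \le \varphi_\omega(x) \le \mu$. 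As a byproduct this argument shows the infimum in the definition of $(\varphi_\omega)_{\mathcal{E}}$, and therefore in $\varphi_{\mathcal{E}}$, is actually attained whenever $z \in D(\varphi_{\mathcal{E}})$ — which I will record explicitly, since it is needed in the subgradient formula.

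Finally, for the subgradient: by definition $(z,g) \in \partial\varphi_{\mathcal{E}}$ iff $z \in D(\varphi_{\mathcal{E}})$ and $\lim_{\lambda\searrow 0}\lambda^{-1}(\varphi_{\mathcal{E}}(z+\lambda w) - \varphi_{\mathcal{E}}(z)) \ge \langle g, w\rangle_Z$ for all $w \in Z$. Using an attaining minimizer $x \in \mathcal{E}^{-1}\{z\}$ with $\varphi(x) = \varphi_{\mathcal{E}}(z)$ and the fact that $\varphi_{\mathcal{E}}(z + \lambda \mathcal{E}v) \le \varphi(x + \lambda v)$, one gets for every $v \in X$ that $\lim_{\lambda\searrow 0}\lambda^{-1}(\varphi(x+\lambda v) - \varphi(x)) \ge \lim_{\lambda\searrow 0}\lambda^{-1}(\varphi_{\mathcal{E}}(z+\lambda\mathcal{E}v) - \varphi_{\mathcal{E}}(z)) \ge \langle g, \mathcal{E}v\rangle_Z$, which is exactly the asserted description of $\partial\varphi_{\mathcal{E}}$ in terms of $x$; the reverse inclusion follows by a density/approximation argument, using that directional derivatives of the convex function $\varphi_{\mathcal{E}}$ are sublinear and lower semicontinuous in the direction, together with the density of $\mathcal{E}X$ in $Z$. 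To get the "in particular" equivalence, note that $\lim_{\lambda\searrow 0}\lambda^{-1}(\varphi(x+\lambda v) - \varphi(x)) \ge \langle g, \mathcal{E}v\rangle_Z = \langle \mathcal{E}^*g, v\rangle_X$ for all $v$ is precisely the statement $(x, \mathcal{E}^*g) \in \partial\varphi$; so setting $q := \mathcal{E}^*g$ we obtain $(x,q) \in \partial\varphi$ with $\mathcal{E}x = z$, $\mathcal{E}^*g = q$, $\varphi(x) = \varphi_{\mathcal{E}}(z)$, and conversely such data yield $(z,g) \in \partial\varphi_{\mathcal{E}}$ by reversing the computation.
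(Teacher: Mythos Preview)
Your proposal is essentially correct and, in contrast to the paper, gives a self-contained argument: the paper does not prove this lemma at all but simply invokes \cite[Theorem~2.9]{CHK16} for everything except the density of $D(\varphi_{\mathcal{E}})$, for which it gives exactly your one-line argument. So your parts (i)--(iii) provide substance where the paper defers to the literature; in particular your lower-semicontinuity argument via the shift $\varphi_\omega$, coercivity, weak compactness and Mazur is the right idea, and your observation that the infimum is attained is both correct and exactly what the subsequent applications in the paper silently rely on.

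There is, however, one step that does not work as you wrote it. For the reverse inclusion in the subgradient description you appeal to density of $\mathcal{E}X$ in $Z$ together with lower semicontinuity of the directional derivative $w\mapsto \varphi_{\mathcal{E}}'(z;w)$. Directional derivatives of proper convex lower semicontinuous functions are sublinear but \emph{not} lower semicontinuous in general (only their l.s.c.\ hull equals the support function of the subdifferential), so this density argument is not justified. Fortunately it is also unnecessary: once you note that the hypothesis $\varphi'(x;v)\ge\langle g,\mathcal{E}v\rangle_Z$ for all $v$ is exactly $(x,\mathcal{E}^*g)\in\partial\varphi$, the secant form of the subgradient gives $\varphi(y)\ge\varphi(x)+\langle \mathcal{E}^*g,y-x\rangle_X$ for every $y\in X$. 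Taking $y\in\ker\mathcal{E}+x$ shows $\varphi(x)=\varphi_{\mathcal{E}}(z)$ automatically; and for any $\zeta\in D(\varphi_{\mathcal{E}})=\mathcal{E}D(\varphi)$, choosing $y$ with $\mathcal{E}y=\zeta$ and taking the infimum over such $y$ yields $\varphi_{\mathcal{E}}(\zeta)\ge\varphi_{\mathcal{E}}(z)+\langle g,\zeta-z\rangle_Z$. For $\zeta\notin D(\varphi_{\mathcal{E}})$ the inequality is trivial. This closes the gap without any appeal to density.
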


\begin{proof}
The statement is, except for the assertion on the density of the domain of $\varphi_{\mathcal{E}}$, proven in {\cite[Theorem~2.9]{CHK16}}. The density of the domain of $\varphi_{\mathcal{E}}$ can be inferred from the identity $D(\varphi_{\mathcal{E}})=\mathcal{E} D(\varphi)$, the assumption that $\varphi$ is densely defined and by employing the property that $\mathcal{E}$ has a~dense range.
\end{proof}

Next, we prove the existence and regularity properties of solutions of the abstract differential-algebraic gradient system \eqref{eq:symmdae}. Note that the initial value in \eqref{eq:symmdae} is only in the closure of the range of $\mathcal{E}$, and that the initial condition is to be understood to hold for the continuous representative of $\mathcal{E} x$.

\begin{theorem}\label{lem:symmdae}
Let $X$ and $Z$ be Hilbert spaces, let $\mathcal{E}\in\mathcal{L}(X,Z)$, and let $\widetilde{Z}\subseteq Z$ be the closure of the range of $\mathcal{E}$. Furthermore, let $\varphi:X\to\mathbb{R} \cup\{\infty\}$ be a~densely defined, convex, lower semicontinuous and $\mathcal{E}$-elliptic functional with subgradient $\partial\varphi$. Then for every $T>0$, $f\in L^2({[0,T]};Z)$ and $z_0\in \widetilde{Z}$,
the {abstract differential-algebraic gradient system} \eqref{eq:symmdae}
admits a~solution $x:{[0,T]}\to X$ in the following sense: \vspace*{-1mm}
\begin{romanlist}[a)]
\item\label{item:sola} $\mathcal{E} x \in C([0,T] ;Z) \cap  H^{1}_{\rm loc}((0,T];Z)$ and
$\mathcal{E} x(0+)= z_0$,\vspace*{1mm}
\item\label{item:solc} $x(t)\in D(\partial\varphi)$ and the differential inclusion in \eqref{eq:symmdae} hold for almost all $t\in [0,T]$.
\end{romanlist}
This solution has the following properties:
\begin{align}
\bigl(t\mapsto \varphi(x(t))\bigr)\in&\, L^1([0,T])\cap {W^{1,1}_{\rm loc}((0,T])},
\label{eq:DAEBarb1}\\
\bigl(t\mapsto t\,\varphi(x(t))\bigr)\in&\, L^\infty([0,T]),\label{eq:DAEBarb1.5}\\
\bigl(t\mapsto t^{1/2}\tfrac{\rm d}{{\rm d}t}\mathcal{E} {x}(t)\bigr)\in&\, L^2([0,T];Z),\label{eq:DAEBarb2}
\intertext{and for almost all $0<t_0\leq t_1\leq T$}
\quad  \varphi(x(t_1))-\varphi(x(t_0))&\,=\int_{t_0}^{t_1}\langle f(\tau),\tfrac{\rm d\,}{{\rm d}\tau}\mathcal{E} x(\tau)\rangle_Z\,{\rm d}\tau-\int_{t_0}^{t_1}\|\tfrac{\rm d\,}{{\rm d}\tau} \mathcal{E} {x}(\tau)\|^2_Z\,{\rm d}\tau.\label{eq:DAEBarb5}
\end{align}
If, further, $y_0\in \mathcal{E} D(\varphi)$, then
\begin{align}
\bigl(t\mapsto \varphi(x(t))\bigr)\in&\, W^{1,1}([0,T]), \label{eq:DAEBarb3}\\
\mathcal{E} x\in&\, H^1({[0,T]};Z),\label{eq:DAEBarb4}
\end{align}
and the identity \eqref{eq:DAEBarb5} holds for all $0\leq t_0\leq t_1\leq T$.
\end{theorem}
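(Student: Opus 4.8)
The plan is to reduce the differential-algebraic gradient system \eqref{eq:symmdae} to the classical theory of gradient flows on the Hilbert space $\widetilde Z$ via the $\mathcal{E}$-subgradient $\partial\varphi_{\mathcal{E}}$ introduced in Lemma~\ref{lem:modfun}. The key observation is that, formally, \eqref{eq:symmdae} says $\mathcal{E}^*\bigl(f(t)-\tfrac{\rm d}{{\rm d}t}\mathcal{E} x(t)\bigr)=q(t)\in\partial\varphi(x(t))$; composing with the correspondence from Lemma~\ref{lem:modfun}, this is equivalent to $f(t)-\tfrac{\rm d}{{\rm d}t}z(t)\in\partial\varphi_{\mathcal{E}}(z(t))$ with $z(t)=\mathcal{E} x(t)$, i.e.\ $\tfrac{\rm d}{{\rm d}t}z(t)+\partial\varphi_{\mathcal{E}}(z(t))\ni f(t)$, $z(0+)=z_0$. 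Since $\mathcal{E}$-ellipticity of $\varphi$ translates (as in the proof of Proposition~\ref{prop:enfun}\,\ref{prop:enfun5})) into the statement that some shift $\varphi_{\mathcal{E}}+\tfrac\omega2\|\cdot\|_Z^2$ is convex, coercive and, by Lemma~\ref{lem:modfun}, lower semicontinuous on $\widetilde Z$, the functional $\varphi_{\mathcal{E}}$ restricted to $\widetilde Z$ is a proper, convex, lower semicontinuous function whose subgradient is (after the harmless exponential shift $z\mapsto e^{-\omega t}z$, or by directly invoking the $\lambda$-convex version of the theory) maximal monotone up to a bounded perturbation.

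Concretely, I would proceed as follows. First, establish that $\varphi_{\mathcal{E}}:\widetilde Z\to\mathbb{R}\cup\{\infty\}$ is proper, convex, lower semicontinuous and $\omega$-convex for a suitable $\omega$; this is essentially Lemma~\ref{lem:modfun} combined with the $\mathcal{E}$-ellipticity hypothesis. Second, apply the classical existence and regularity theory for gradient flows governed by (shifted) subgradients of convex lsc functions on a Hilbert space---Brezis \cite{Bre73} or Barbu \cite{Bar10}---to the Cauchy problem $\tfrac{\rm d}{{\rm d}t}z(t)+\partial\varphi_{\mathcal{E}}(z(t))\ni f(t)$ on $\widetilde Z$ with $z_0\in\widetilde Z=\overline{D(\varphi_{\mathcal{E}})}$ (density of $D(\varphi_{\mathcal{E}})$ in $\widetilde Z$ is part of Lemma~\ref{lem:modfun}). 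This yields a unique $z\in C([0,T];\widetilde Z)\cap H^1_{\rm loc}((0,T];\widetilde Z)$ with $z(0+)=z_0$, $z(t)\in D(\partial\varphi_{\mathcal{E}})$ a.e., together with the standard regularization estimates: $t\mapsto\varphi_{\mathcal{E}}(z(t))\in L^1\cap W^{1,1}_{\rm loc}$, $t\mapsto t\,\varphi_{\mathcal{E}}(z(t))\in L^\infty$, $t\mapsto t^{1/2}z'(t)\in L^2([0,T];Z)$, the energy identity $\varphi_{\mathcal{E}}(z(t_1))-\varphi_{\mathcal{E}}(z(t_0))=\int_{t_0}^{t_1}\langle f,z'\rangle_Z-\int_{t_0}^{t_1}\|z'\|_Z^2$ for a.e.\ $0<t_0\le t_1\le T$, and, when $z_0\in D(\varphi_{\mathcal{E}})=\mathcal{E} D(\varphi)$, the global bounds $t\mapsto\varphi_{\mathcal{E}}(z(t))\in W^{1,1}([0,T])$ and $z\in H^1([0,T];Z)$ with the energy identity holding on all of $[0,T]$. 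Third, use the ``in particular'' part of Lemma~\ref{lem:modfun}: for a.e.\ $t$, from $(z(t),f(t)-z'(t))\in\partial\varphi_{\mathcal{E}}$ pick $x(t)\in D(\varphi)$ with $\mathcal{E} x(t)=z(t)$, $\varphi(x(t))=\varphi_{\mathcal{E}}(z(t))$ and $\mathcal{E}^*(f(t)-z'(t))\in\partial\varphi(x(t))$; a measurable selection argument (or selection of the minimal-norm element, which is measurable by standard results) produces a measurable $x:[0,T]\to X$. Then $\mathcal{E} x=z$ inherits all the asserted regularity, $x(t)\in D(\partial\varphi)$ a.e., the differential inclusion \eqref{eq:symmdae} holds a.e.\ because $\tfrac{\rm d}{{\rm d}t}\mathcal{E} x=z'$, and $\varphi(x(t))=\varphi_{\mathcal{E}}(z(t))$ transfers \eqref{eq:DAEBarb1}--\eqref{eq:DAEBarb5} and the improved statements verbatim.

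The main obstacle I expect is twofold. The first delicate point is that $\partial\varphi_{\mathcal{E}}$ need not be genuinely maximal monotone on $Z$ but only on the closed subspace $\widetilde Z$, so the whole gradient-flow machinery must be run on $\widetilde Z$; one must check that $D(\varphi_{\mathcal{E}})\subseteq\widetilde Z$ (immediate from $D(\varphi_{\mathcal{E}})=\mathcal{E} D(\varphi)$) and that coercivity of the shift $\varphi_{\mathcal{E}}+\tfrac\omega2\|\cdot\|^2$ on $\widetilde Z$ forces the sublevel sets to be weakly compact so that $\partial\varphi_{\mathcal{E}}$ is maximal monotone there---this parallels the coercivity computation already carried out in Proposition~\ref{prop:enfun}\,\ref{prop:enfun5}). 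The second, more technical, point is the passage back from $z$ to $x$: one has only a set-valued correspondence $z(t)\mapsto\{x:\mathcal{E} x=z(t),\ \varphi(x)=\varphi_{\mathcal{E}}(z(t))\}$ and needs a measurable (ideally, the minimal-$X$-norm) selection to obtain a bona fide function $x(\cdot)$; this requires a measurable-selection theorem, but since the relevant set is closed convex and the data $z(\cdot)$ is continuous on $(0,T]$, this is standard. Handling the $\omega>0$ shift (the flow for $\varphi_{\mathcal{E}}$ itself is $\omega$-contractive rather than contractive) is routine---one either substitutes $e^{\omega t}z$ or invokes the version of the theory for $\lambda$-convex functionals---and does not affect any of the stated qualitative conclusions.
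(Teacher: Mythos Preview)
Your plan is essentially the paper's proof: reduce \eqref{eq:symmdae} to the classical gradient flow $\tfrac{\rm d}{{\rm d}t}z+\partial\varphi_{\mathcal{E}}(z)\ni \varPi f$ on $\widetilde Z$ via Lemma~\ref{lem:modfun}, invoke Brezis/Barbu, and lift back using the ``in particular'' part of that lemma. A few small corrections and simplifications are worth noting. First, no $\omega$-shift or exponential change of variables is needed: $\varphi$ is assumed convex outright, so by Lemma~\ref{lem:modfun} $\varphi_{\mathcal{E}}$ is already proper, convex and lower semicontinuous on $\widetilde Z$, and the Brezis/Barbu machinery applies directly; $\mathcal{E}$-ellipticity enters only through Lemma~\ref{lem:modfun} (to guarantee the infimum defining $\varphi_{\mathcal{E}}$ is attained and hence the stated characterization of $\partial\varphi_{\mathcal{E}}$). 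Second, you do not need a measurable selection for $x(\cdot)$: the theorem asserts regularity only of $\mathcal{E} x=z$ and of $t\mapsto\varphi(x(t))=\varphi_{\mathcal{E}}(z(t))$, and the paper in fact explicitly refrains from claiming measurability of $x$ itself (see the remark following Corollary~\ref{thm:infDAEsys}). Third, the paper projects $f$ onto $\widetilde Z$ via the orthogonal projector $\varPi$ before applying the gradient-flow theory, then uses $\mathcal{E}^*\varPi=\mathcal{E}^*$ to recover the original inclusion; you should do the same (or note that $\langle f,z'\rangle_Z=\langle\varPi f,z'\rangle_Z$ since $z'\in\widetilde Z$). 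Finally, \eqref{eq:DAEBarb1.5} is \emph{not} part of the standard Brezis/Barbu output; the paper derives it separately from the energy identity \eqref{eq:DAEBarb5} combined with \eqref{eq:DAEBarb2} and Young's inequality, so you should plan for that short extra computation.
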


Before proving this result, we note that, except for the relation \eqref{eq:DAEBarb1.5}, a~proof of Theorem~\ref{lem:symmdae} for the special case $X=Z$ and $\mathcal{E}=I$ can be found in \cite[Th\'eor\`eme~3.6, p.72]{Bre73} or \cite[Theorem~4.11 \& Lemma~4.4]{Bar10}. These results for $\mathcal{E}=I$ are indeed the basis for our proof of Theorem~\ref{lem:symmdae}.

\begin{proof}
Since $\widetilde{Z}\subseteq Z$ is the closure of the range of $\mathcal{E}$, then $\mathcal{E} :X\to\widetilde{Z}$ has dense range, and Lemma~\ref{lem:modfun} implies that the functional $\varphi_{\mathcal{E}}:\widetilde{Z}\to\mathbb{R} \cup\{\infty\}$ in \eqref{eq:modfun} is densely defined, convex and lower semicontinuous. Further,
let $\varPi\in\mathcal{L}(Z)$ be the orthogonal projection onto $\widetilde{Z}$.
Then $\varPi f\in L^2({[0,T]};\widetilde{Z})$, and,  by \cite[Th\'eor\`eme~3.6, p.72]{Bre73} or \cite[Theorem~4.11]{Bar10}, the {gradient system}
\begin{equation}
\varPi{f}(t)-\tfrac{\rm d}{{\rm d}t}z(t)\in \partial\varphi_{\mathcal{E}}(z(t)), \quad z(0) = z_0\label{eq:resolODE}
\end{equation}
admits a~unique solution $z\in C ([0,T] ; \widetilde{Z}) \cap H^{1}_{\rm loc}((0,T];{\widetilde{Z}})$ in the sense that $z(0) = z_0$, $z(t)\in D(\varphi_{\mathcal{E}} )$ and the differential inclusion in \eqref{eq:resolODE} hold for almost all $t\in [0,T]$.
Moreover, this solution has the following properties:
\begin{align}
\bigl(t\mapsto \varphi_{\mathcal{E}}(z(t))\bigr)\in&\, L^1([0,T]),\label{eq:Barb1}\\
\bigl(t\mapsto t^{1/2}\tfrac{\rm d}{{\rm d}t}z(t)\bigr)\in&\, L^2([0,T];{\widetilde{Z}}).\label{eq:Barb2}
\end{align}
The fact that the differential inclusion \eqref{eq:resolODE} holds for almost all $t\in [0,T]$ is equivalent to saying that for almost all $t\in{[0,T]}$,
\[
\bigl(z(t),\varPi f(t)-\tfrac{\rm d}{{\rm d}t}z(t)\bigr)\in \partial\varphi_{\mathcal{E}}.
\]
Then, by Lemma~\ref{lem:modfun}, there exist functions $x$, $w:[0,T]\to X$ such that,
for almost all $t\in[0,T]$, $(x(t),w(t))\in \partial\varphi$, $\mathcal{E} x(t)=z(t)$ and $w(t)=\mathcal{E}^*\varPi f(t)-\mathcal{E}^*\tfrac{\rm d}{{\rm d}t}z(t)$. In particular, $\mathcal{E} x=z\in C([0,T] ; \widetilde{Z} ) \cap H^1_{\rm loc}((0,T];\widetilde{Z})$, that is, $z$ is a continuous representative of $\mathcal{E} x$, and therefore $\mathcal{E} x(0+) = \lim_{t\to 0+} \mathcal{E} x(t) = \lim_{t\to 0+} z(t) = z_0$. In addition, by using that $\varPi\mathcal{E} = \mathcal{E}$ together with $\varPi=\varPi^*$ implies $\mathcal{E}^*=\mathcal{E}^*\varPi$, we have
\begin{align*}
  \mathcal{E}^*f(t)-\mathcal{E}^*\tfrac{\rm d}{{\rm d}t} \mathcal{E} x(t)& =
\mathcal{E}^*\varPi f(t)-\mathcal{E}^*\tfrac{\rm d}{{\rm d}t} \mathcal{E} x(t)\\
& =\mathcal{E}^*\varPi f(t)-\mathcal{E}^*\tfrac{\rm d}{{\rm d}t}z(t)=w(t)\in \partial\varphi(x(t)),
\end{align*}
that is, \ref{item:sola}) and \ref{item:solc}) hold.
Moreover, \eqref{eq:Barb2} implies \eqref{eq:DAEBarb2}. Using Lemma~\ref{lem:modfun}, we obtain
\begin{equation}
\varphi_{\mathcal{E}}(z(t))=\varphi(x(t)) \quad\text{for almost all } t\in[0,T] ,
\label{eq:phiErel}
\end{equation}
and then \eqref{eq:Barb1} leads to $\bigl(t\mapsto \varphi(x(t))\bigr)\in\, L^1([0,T])$.
An application of \cite[Lemme~3.3, p.73]{Bre73} or \cite[Lemma~4.4]{Bar10} gives $\bigl(t\mapsto \varphi_{\mathcal{E}}(z(t))\bigr)\in\, W^{1,1}_{\rm loc}((0,T])$, whence \eqref{eq:phiErel} implies $\left(t\mapsto \varphi(x(t))\right)\in\, W^{1,1}_{\rm loc}((0,T])$, which shows \eqref{eq:DAEBarb1}.

In order to prove \eqref{eq:DAEBarb5}, recall that the property $z\in H^1_{\rm loc}((0,T];\tilde{Z})$ together with \cite[Lemma~4.4]{Bar10} implies that the weak derivative of $\bigl(t\mapsto \varphi_{\mathcal{E}}(z(t))\bigr)$ fulfills
\[
\tfrac{\rm d}{{\rm d}t}\varphi_{\mathcal{E}}(z(t))=\bigl\langle \varPi f(t),\tfrac{\rm d}{{\rm d}t}z(t) \bigr\rangle_Z-\bigl\|\tfrac{\rm d}{{\rm d}t}z(t)\bigr\|_Z^2\enskip\text{ for almost all }t\in[0,T].
\]
Then, for $0<t_0\leq t_1\leq T$, an integration over $[t_0,t_1]$ gives
\[
\varphi_{{\mathcal{E}}}(z(t_1))-\varphi_{{\mathcal{E}}}(z(t_0))=\int_{t_0}^{t_1}\bigl\langle \varPi f(\tau),\tfrac{\rm d}{{\rm d}\tau}z(\tau) \bigr\rangle_Z-\bigl\|\tfrac{\rm d}{{\rm d}\tau}z(\tau)\bigr\|^2_Z\,{\rm d}\tau.
\]
Using the equality $\mathcal{E} x=z$, \eqref{eq:phiErel} and the self-adjointness of $\varPi$, we obtain \eqref{eq:DAEBarb5}.

For the proof of \eqref{eq:DAEBarb1.5}, let $t\in(0,T]$. Then \eqref{eq:DAEBarb5} together with
Young's inequality
\cite[p.~53]{Alt16}
leads to
\[\begin{aligned}
t\,\varphi(x(t))
=&\,t\,\varphi(x(T))+t\,
\int_{t}^{T}\|\tfrac{\rm d\,}{{\rm d}\tau} \mathcal{E}{x}(\tau)\|^2_Z\,{\rm d}\tau-
t\,\int_{t}^{T}\langle f(\tau),\tfrac{\rm d\,}{{\rm d}\tau}\mathcal{E} x(\tau)\rangle_Z\,{\rm d}\tau\\
\leq&\,t\,\varphi(x(T))+t\,
\int_{t}^{T}\|\tfrac{\rm d\,}{{\rm d}\tau} \mathcal{E}{x}(\tau)\|^2_Z\,{\rm d}\tau+
\frac{t}2\,\int_{t}^{T} \Bigl( \|\tfrac{\rm d\,}{{\rm d}\tau}\mathcal{E} x(\tau)\|_{Z}^2+\|f(\tau)\|_{Z}^2\Bigr) \,{\rm d}\tau\\
\leq&\,T\,\varphi(x(T))+
\frac32\int_{t}^{T}\|\tau^{1/2}\tfrac{\rm d\,}{{\rm d}\tau} \mathcal{E}{x}(\tau)\|^2_Z\,{\rm d}\tau+
\frac{T}2\,\int_{t}^{T}\|f(\tau)\|_{{Z}}^2\,{\rm d}\tau\\
\leq&\,T\,\varphi(x(T))+
\frac32\int_{0}^{T}\|\tau^{1/2}\tfrac{\rm d\,}{{\rm d}\tau} \mathcal{E}{x}(\tau)\|^2_Z\,{\rm d}\tau+
\frac{T}2\,\int_{0}^{T}\|f(\tau)\|_{Z}^2\,{\rm d}\tau.
\end{aligned}\]
As the latter expression is independent of $t$ and finite by $f\in L^2({[0,T]};Z)$ and the
already proved relation \eqref{eq:DAEBarb2}, we obtain \eqref{eq:DAEBarb1.5}.

It remains to prove the statements under the additional assumption that the initial value fulfills $z_0\!\in\! \mathcal{E} D(\varphi)$ or, by Lemma~\ref{lem:modfun}, $z_0\in D(\varphi_{\mathcal{E}})$. Then we can apply \cite[Theorem~4.11]{Bar10} to obtain that the solution $z$ of \eqref{eq:resolODE} fulfills
\begin{align}
\bigl(t\mapsto \varphi_{\mathcal{E}}(z(t))\bigr)\in&\, W^{1,1}([0,T]),\label{eq:Barbsmooth1}\\
z\in&\, H^{1}({[0,T]};{\widetilde{Z})}.\label{eq:Barbsmooth2}
\end{align}
Then $\mathcal{E} x=z$ together with \eqref{eq:phiErel}, \eqref{eq:Barbsmooth1} and \eqref{eq:Barbsmooth2} implies \eqref{eq:DAEBarb3} and \eqref{eq:DAEBarb4}. The statement that the identity \eqref{eq:DAEBarb5} further holds for all $0\leq t_0\leq t_1\leq T$ can be concluded from \eqref{eq:Barbsmooth1} and the argumentation of the proof of \eqref{eq:DAEBarb5} in the case $t_0>0$.
\end{proof}

\begin{remark}
In numerical analysis of finite-dimensional differential-algebraic equations, the notion of {\em (differentiation) index} plays a~fundamental role \textup{\cite{LamMT13,KunM06}}. That is, the number of differentiations needed until an ordinary differential equation is obtained. Though there exist several attempts to generalize the index to infinite-dimensional differential-algebraic equations \textup{\cite{RT05,Rei06,Rei07,TrosWaur18,TrosWaur19}}, these approaches have in common that they are applicable to a~rather limited class, which in general excludes equations of type \eqref{eq:symmdae} even when $\partial\varphi$ is linear and single-valued.
On the other hand, system \eqref{eq:symmdae} has intrinsic properties which are - in the finite-dimensional case - only fulfilled by differential-algebraic equations with index at most one. Namely, it follows from \textup{\cite[Theorem 3.53]{LamMT13}} that a~finite-dimensional differential-algebraic equation of type $\tfrac{\rm d}{{\rm d}t}\cEl{x}=a(t,x)$ with $\cEl\in\R^{n\times n}$ has index at most one if and only if for any $x_0\in\R^n$, there exists a~solution which fulfills the initial condition $\cEl x(0)=\cEl x_0$. Note that, by~Theorem~\textup{\ref{lem:symmdae}}, system \eqref{eq:symmdae} has this property of unrestricted initializability.
\end{remark}

\newpage
Let us now apply Theorem~\ref{lem:symmdae} to the following special differential-algebraic gradient system
\begin{equation}\label{eq:opdaegrad}
\begin{aligned}
\cEl_{21}^*x_2(t)-\cEl_{11}^*\tfrac{d}{dt}\cEl_{11} x_1(t)\in & \,\partial\varphi(x_1(t)),\\
\cAl_{22}\cAl_{22}^* x_2(t)+\tfrac{d}{dt}\cEl_{21} x_1(t)  = & \, u(t),\\
\cEl_{11} x_1(0+)= & \,z_{1,0},\\
\cEl_{21} x_1(0+)= & \,z_{2,0}
\end{aligned}
\end{equation}
with a~given function $u:[0,T]\to U$ and given initial values $z_{1,0}\in Y$, $z_{2,0}\in U$. In the subsequent section, we show that, by involving the magnetic energy and its subgradient, the coupled MQS system \eqref{eq:MQS} fits into this abstract framework, and we apply our results for \eqref{eq:opdaegrad} to prove the existence of solutions together with some further regularity results.

The following corollary establishes the existence result for system \eqref{eq:opdaegrad}.

\begin{corollary}\label{thm:infDAEsys}
Let $X$, $Y$ and $U$ be Hilbert spaces and let $\cAl_{22}\in\mathcal{L}({U})$  have a~bounded inverse,
$\cEl_{11}\in\mathcal{L}(X,Y)$, \mbox{$\cEl_{21}\in\mathcal{L}(X,U)$},
 and $\varphi:X\to\mathbb{R} \cup\{\infty\}$
be densely defined, convex, lower semicontinuous and $\mathcal{E}$-elliptic for
$\mathcal{E}\in\mathcal{L}(X,Y\times U)$ given by
    \begin{equation}
\mathcal{E} x=(\cEl_{11}x, \cAl_{22}^{-1}\cEl_{21}x).\label{eq:Edef}
\end{equation}
Further, let
 $T>0$, $u\in L^2({[0,T]};U)$ and $(z_{1,0},z_{2,0})$ belong to the closure of the range of $\mathcal{E}$ in $Y\times U$. Then the {abstract differential-algebraic gradient system} \eqref{eq:opdaegrad}
has a~solution $(x_1,x_2):[0,T]\to X\times U$ in the following sense: \vspace*{-1mm}
\begin{romanlist}[a)]
\item\label{item:solsysa1} $\cEl_{11}x_1 \in C([0,T]; Y) \cap H^{1}_{\rm loc}((0,T];Y)$ and
$\cEl_{11}x_1 (0+) = z_{1,0}$;\vspace*{1mm}
\item\label{item:solsysa2} $\cEl_{21}x_1 \in C([0,T]; U) \cap H^{1}_{\rm loc}((0,T];U)$ and
$\cEl_{21}x_1 (0+) = z_{2,0}$;\vspace*{1mm}
\item\label{item:solsysb3} $x_2\in L^{2}_{\rm loc}((0,T],U)$;\vspace*{1mm}
\item\label{item:solsysc} $x_1(t)\in D(\partial\varphi)$, and the differential inclusion as well as the differential equation in \eqref{eq:opdaegrad} are satisfied for almost all $t\in [0,T]$.
\end{romanlist}
This solution has the following properties:
\begin{align}
\bigl(t\mapsto \varphi(x_1(t))\bigr)\in&\, L^1({[0,T]})\cap W^{1,1}_{\rm loc}((0,T]),\label{eq:DAEBarbsys1}\\
\bigl(t\mapsto t\,\varphi(x_1(t))\bigr)\in &\, L^\infty([0,T]),\label{eq:DAEBarbsys1.5}\\
\bigl(t\mapsto t^{1/2}\tfrac{\rm d}{{\rm d}t}\cEl_{11}{x}_1(t)\bigr)\in&\, L^2([0,T];Y),\label{eq:DAEBarbsys2}\\
\bigl(t\mapsto t^{1/2}\tfrac{\rm d}{{\rm d}t}\cEl_{21}{x}_1(t)\bigr)\in&\, L^2([0,T];U),\label{eq:DAEBarbsys3}\\
\bigl(t\mapsto t^{1/2}{x}_2(t)\bigr)\in&\, L^2([0,T];U),\label{eq:DAEBarbsys3.5}
\end{align}
and for all $0<t_0\leq t_1\leq T$,
\begin{align}
\quad  \varphi(x_1(t_1))-\varphi(x_1(t_0))&\,= \int_{t_0}^{t_1}\langle x_2(\tau),u(\tau)\rangle_U \,{\rm d}\tau
-\int_{t_0}^{t_1}\|\cAl_{22}^*{x}_2(\tau)\|^2_{{U}}\,{\rm d}\tau\label{eq:DAEBarbsys4}\\
&\qquad-\int_{t_0}^{t_1}\|\tfrac{\rm d}{{\rm d}\tau} \cEl_{11}{x}_1(\tau)\|^2_Y\,{\rm d}\tau.\nonumber
\end{align}
If, further, $(z_{1,0},\cAl_{22}^{-1}z_{2,0})\in \mathcal{E} D(\varphi)$, then
\begin{align}
\bigl(t\mapsto \varphi(x_1(t))\bigr)\in&\, W^{1,1}([0,T]),\label{eq:DAEBarbsys5}\\
\cEl_{11}{x}_1\in&\, H^1([0,T];Y),\label{eq:DAEBarbsys6}\\
\cEl_{21}{x}_1\in&\, H^1([0,T];U),\label{eq:DAEBarbsys7}\\
{x}_2\in&\, L^2([0,T];U),\label{eq:DAEBarbsys8}
\end{align}
and the identity {\eqref{eq:DAEBarbsys4}} holds for all $0\leq t_0\leq t_1\leq T$.
\end{corollary}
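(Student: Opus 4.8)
The plan is to recognize \eqref{eq:opdaegrad} as an instance of the abstract gradient system \eqref{eq:symmdae} and then invoke Theorem~\ref{lem:symmdae}. Put $Z:=Y\times U$ and let $\mathcal{E}\in\mathcal{L}(X,Z)$ be the operator \eqref{eq:Edef}. Since $\cAl_{22}$ has a bounded inverse, so does $\cAl_{22}^*$, and computing the adjoint gives $\mathcal{E}^*(y,w)=\cEl_{11}^*y+\cEl_{21}^*(\cAl_{22}^*)^{-1}w$ for $(y,w)\in Y\times U$. The second relation in \eqref{eq:opdaegrad} is algebraic in $x_2$ and solves to $x_2(t)=(\cAl_{22}^*)^{-1}\cAl_{22}^{-1}\bigl(u(t)-\tfrac{d}{dt}\cEl_{21}x_1(t)\bigr)$; substituting this into the first relation and grouping the terms $\cEl_{11}^*\tfrac{d}{dt}\cEl_{11}x_1$ and $\cEl_{21}^*(\cAl_{22}^*)^{-1}\cAl_{22}^{-1}\tfrac{d}{dt}\cEl_{21}x_1$ into $\mathcal{E}^*\tfrac{d}{dt}\mathcal{E} x_1$ (using $\tfrac{d}{dt}\mathcal{E} x_1=(\tfrac{d}{dt}\cEl_{11}x_1,\cAl_{22}^{-1}\tfrac{d}{dt}\cEl_{21}x_1)$ and the formula for $\mathcal{E}^*$) shows that, for $x_1$, the system \eqref{eq:opdaegrad} is equivalent to
\[
\mathcal{E}^*f(t)-\mathcal{E}^*\tfrac{d}{dt}\mathcal{E} x_1(t)\in\partial\varphi(x_1(t)),\qquad \mathcal{E} x_1(0+)=z_0,
\]
with $f(t):=(0,\cAl_{22}^{-1}u(t))\in Z$ and $z_0:=(z_{1,0},\cAl_{22}^{-1}z_{2,0})\in Z$; conversely, any $x_1$ solving this gradient system, together with $x_2$ defined by the displayed formula, solves \eqref{eq:opdaegrad}.

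Next I would verify the hypotheses of Theorem~\ref{lem:symmdae}: $X,Z$ are Hilbert, $\mathcal{E}\in\mathcal{L}(X,Z)$, $\varphi$ is densely defined, convex, lower semicontinuous and $\mathcal{E}$-elliptic for this very $\mathcal{E}$ by assumption; $f\in L^2([0,T];Z)$ since $u\in L^2([0,T];U)$ and $\cAl_{22}^{-1}$ is bounded; and $z_0$ lies in the closure $\widetilde{Z}$ of the range of $\mathcal{E}$ by the hypothesis on the initial data. Theorem~\ref{lem:symmdae} then yields $x:=x_1$ satisfying \ref{item:sola})--\ref{item:solc}) and the regularity \eqref{eq:DAEBarb1}--\eqref{eq:DAEBarb5}, and, when in addition $z_0\in\mathcal{E} D(\varphi)$, also \eqref{eq:DAEBarb3}--\eqref{eq:DAEBarb4}.

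The remaining work is to translate these conclusions coordinate-wise, which is routine once one uses that $\cAl_{22}$, $\cAl_{22}^{-1}$, $(\cAl_{22}^*)^{-1}$ are bounded and that the projections $Z=Y\times U\to Y$ and $Z\to U$ are bounded, so that membership in each of $C([0,T];\cdot)$, $H^1([0,T];\cdot)$, $H^1_{\rm loc}$, $L^2$, $L^2_{\rm loc}$ is preserved. Thus \ref{item:sola}) projects to $\cEl_{11}x_1\in C([0,T];Y)\cap H^1_{\rm loc}((0,T];Y)$ with $\cEl_{11}x_1(0+)=z_{1,0}$ and to $\cAl_{22}^{-1}\cEl_{21}x_1\in C([0,T];U)\cap H^1_{\rm loc}$, and applying $\cAl_{22}$ gives \ref{item:solsysa1})--\ref{item:solsysa2}) (note $\cEl_{21}x_1(0+)=\cAl_{22}\cAl_{22}^{-1}z_{2,0}=z_{2,0}$); the formula for $x_2$ together with $\cEl_{21}x_1\in H^1_{\rm loc}$ gives \ref{item:solsysb3}), and then \ref{item:solsysc}) follows from \ref{item:solc}) since $\partial\varphi$ is the same object, while the algebraic relation in \eqref{eq:opdaegrad} holds a.e.\ by construction of $x_2$. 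Splitting \eqref{eq:DAEBarb2} into its two coordinates (and applying $\cAl_{22}$) gives \eqref{eq:DAEBarbsys2}--\eqref{eq:DAEBarbsys3}, and with the formula for $x_2$ and $t^{1/2}u\in L^2([0,T];U)$ one obtains \eqref{eq:DAEBarbsys3.5}; \eqref{eq:DAEBarb1} and \eqref{eq:DAEBarb1.5} give \eqref{eq:DAEBarbsys1} and \eqref{eq:DAEBarbsys1.5} directly, since $\varphi(x_1(t))=\varphi(x(t))$. For the energy balance \eqref{eq:DAEBarbsys4}, substitute $f=(0,\cAl_{22}^{-1}u)$ and $\tfrac{d}{d\tau}\mathcal{E} x_1$ into \eqref{eq:DAEBarb5}: using $\|\tfrac{d}{d\tau}\mathcal{E} x_1\|_Z^2=\|\tfrac{d}{d\tau}\cEl_{11}x_1\|_Y^2+\|\cAl_{22}^{-1}\tfrac{d}{d\tau}\cEl_{21}x_1\|_U^2$, $\langle f,\tfrac{d}{d\tau}\mathcal{E} x_1\rangle_Z=\langle\cAl_{22}^{-1}u,\cAl_{22}^{-1}\tfrac{d}{d\tau}\cEl_{21}x_1\rangle_U$, and $\cAl_{22}^*x_2=\cAl_{22}^{-1}(u-\tfrac{d}{d\tau}\cEl_{21}x_1)$, one verifies the pointwise identity $\langle\cAl_{22}^{-1}u,\cAl_{22}^{-1}\tfrac{d}{d\tau}\cEl_{21}x_1\rangle_U-\|\cAl_{22}^{-1}\tfrac{d}{d\tau}\cEl_{21}x_1\|_U^2=\langle x_2,u\rangle_U-\|\cAl_{22}^*x_2\|_U^2$ by expanding the squares, which turns \eqref{eq:DAEBarb5} into \eqref{eq:DAEBarbsys4}. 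Finally, under $z_0=(z_{1,0},\cAl_{22}^{-1}z_{2,0})\in\mathcal{E} D(\varphi)$, the stronger conclusions \eqref{eq:DAEBarb3}--\eqref{eq:DAEBarb4} translate in the same way to \eqref{eq:DAEBarbsys5}--\eqref{eq:DAEBarbsys8} and to the validity of \eqref{eq:DAEBarbsys4} for all $0\le t_0\le t_1\le T$.

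There is no real obstacle: the statement is essentially a coordinate rephrasing of Theorem~\ref{lem:symmdae}. The two points that need a little care are getting the correspondence $f=(0,\cAl_{22}^{-1}u)$, $z_0=(z_{1,0},\cAl_{22}^{-1}z_{2,0})$ exactly right — in particular the $\cAl_{22}^{-1}$ in the second slot, which is forced by the form \eqref{eq:Edef} of $\mathcal{E}$ and by the requirement $\cEl_{21}x_1(0+)=z_{2,0}$ — and the short algebraic identity that matches the dissipation terms of \eqref{eq:DAEBarbsys4} with those of \eqref{eq:DAEBarb5}.
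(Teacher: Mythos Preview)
Your proposal is correct and follows essentially the same route as the paper's proof: both define $f=(0,\cAl_{22}^{-1}u)$ and $z_0=(z_{1,0},\cAl_{22}^{-1}z_{2,0})$, apply Theorem~\ref{lem:symmdae} to obtain $x_1:=x$, set $x_2=(\cAl_{22}^*)^{-1}\cAl_{22}^{-1}(u-\tfrac{d}{dt}\cEl_{21}x_1)$, and then translate the conclusions componentwise, including the same algebraic manipulation (via $\cAl_{22}^*x_2=\cAl_{22}^{-1}u-\cAl_{22}^{-1}\tfrac{d}{d\tau}\cEl_{21}x_1$) to turn \eqref{eq:DAEBarb5} into \eqref{eq:DAEBarbsys4}. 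The only cosmetic difference is that the paper verifies the differential inclusion in \eqref{eq:opdaegrad} \emph{a posteriori} from the solution of \eqref{eq:symmdae}, whereas you first argue the equivalence of the two systems and then invoke the theorem; the content is identical.
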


\begin{proof}
Let $\mathcal{E}\in\mathcal{L}(X,Y\times U)$ be as in \eqref{eq:Edef}.
By the assumption, the functional $\varphi$ is $\mathcal{E}$-elliptic. Consider the function
\begin{equation}
f=\bigl(0,\cAl_{22}^{-1}u\bigr)\in L^2([0,T];Y\times U).\label{eq:fdaedef}
\end{equation}
Theorem~\ref{lem:symmdae} implies that the abstract differential-algebraic gradient system \eqref{eq:symmdae}
with $\mathcal{E}$ as in \eqref{eq:Edef} and $z_0=(z_{1,0},\mathcal{A}_{22}^{-1}z_{2,0})$
has a~{solution} $x:{[0,T]}\to X$ in the sense that
$\mathcal{E} x \in C([0,T] ; Y\times U) \cap H^{1}_{\rm loc}((0,T];Y\times U)$,
$\mathcal{E} x(0+) = (z_{1,0}, \mathcal{A}_{22}^{-1}z_{2,0})$, and $x(t)\in D(\partial\varphi)$
and the differential inclusion in \eqref{eq:symmdae} hold for almost all $t\in [0,T]$.

We now consider $(x_1,x_2):[0,T]\to X\times U$ with
\begin{equation}
\begin{aligned}
x_1(t)=&\,x(t),\\
x_2(t)=&\,(\cAl_{22}^{-1})^*\cAl_{22}^{-1}\bigl(u(t)-\tfrac{\rm d}{{\rm d}t} \cEl_{21}x(t)\bigr).
\end{aligned}\label{eq:subsdaeref}
\end{equation}
Then the above properties of the solution $x$ imply that
$\cEl_{11}x_1:[0,T]\to Y$ and \mbox{$\cEl_{21}x_1:[0,T]\to U$}
are both continuous with $\cEl_{11}x_1(0+)=z_{1,0}$, $\cEl_{21}x_1(0+)=z_{2,0}$,
$\cEl_{11}x_1\in H^{1}_{\rm loc}((0,T],Y)$, $\cEl_{21}x_1\in H^{1}_{\rm loc}((0,T],U)$
and $x_2\in L^{2}_{\rm loc}((0,T],U)$. Furthermore, for almost all \mbox{$t\!\in\! [0,T]$}, $x_1(t)\in D(\partial\varphi)$ and
\[\begin{aligned}
\cEl_{21}^*x_2(t)\!-\!\cEl_{11}^*\tfrac{\rm d}{{\rm d}t}\cEl_{11} x_1(t)
\overset{\eqref{eq:subsdaeref}}{=}&\cEl_{21}^*(\cAl_{22}^{-1})^*\cAl_{22}^{-1}\bigl(u(t)\!-\!\tfrac{\rm d}{{\rm d}t} \cEl_{21}x(t)\bigr)\!-\!\cEl_{11}^*\tfrac{\rm d}{{\rm d}t}\cEl_{11} x(t)\\
\overset{\eqref{eq:Edef}}{\underset{\&\eqref{eq:fdaedef}}{=}}&\,\mathcal{E}^*f(t)-\mathcal{E}^*\tfrac{\rm d}{{\rm d}t}\mathcal{E} x(t)\in\partial\varphi(x(t))=\partial\varphi(x_1(t)),\\[1mm]
\cAl_{22}\cAl_{22}^*x_2(t)+\tfrac{\rm d}{{\rm d}t}\cEl_{21} x_1(t)
\overset{\eqref{eq:subsdaeref}}{=}&\,u(t).
\end{aligned}\]
So far, we have proven that $(x_1,x_2)$ fulfills \ref{item:solsysa1})-\ref{item:solsysc}).
Since, by Theorem~\ref{lem:symmdae}, $x$ satisfies
\eqref{eq:DAEBarb1}--\eqref{eq:DAEBarb2}, we obtain from \eqref{eq:Edef} and \eqref{eq:subsdaeref} that
\eqref{eq:DAEBarbsys1}--\eqref{eq:DAEBarbsys3.5} hold. Moreover,
for all $0<t_0 \leq t_1\leq T$,
{\allowdisplaybreaks\begin{align*}
\varphi(x_1(t_1))-&\,\varphi({x_1}(t_0))\overset{\eqref{eq:subsdaeref}}{=}\varphi(x(t_1))-\varphi(x(t_0))\\
\overset{\eqref{eq:DAEBarb5}}{=}
&\,\int_{t_0}^{t_1}\langle f(\tau),\tfrac{\rm d\,}{{\rm d}\tau}\mathcal{E} x(\tau)\rangle_{Y\times U}\,{\rm d}\tau-\int_{t_0}^{t_1}\|\tfrac{\rm d\,}{{\rm d}\tau} \mathcal{E}{x}(\tau)\|^2_{Y\times U}\,{\rm d}\tau\\
\overset{\eqref{eq:Edef}}{\underset{\&\eqref{eq:fdaedef}}{=}}&
\int_{t_0}^{t_1}\bigl\langle\cAl_{22}^{-1}u(\tau),\cAl_{22}^{-1}\tfrac{\rm d\,}{{\rm d}\tau}\cEl_{21}x(\tau)\bigr\rangle_{U} \,{\rm d}\tau\\
&\quad-\int_{t_0}^{t_1}\|\tfrac{\rm d\,}{{\rm d}\tau} \cEl_{11}{x}(\tau)\|^2_{Y}
+\|\cAl_{22}^{-1}\tfrac{\rm d\,}{{\rm d}\tau} \cEl_{21}{x}(\tau)\|^2_{U}\,{\rm d}\tau\\
\overset{\eqref{eq:subsdaeref}}{=}&
\int_{t_0}^{t_1}\bigl\langle \cAl_{22}^{-1}u(\tau), \cAl_{22}^{-1}u(\tau)-\cAl_{22}^*x_2(\tau)\bigr\rangle_{{U}}\,{\rm  d}\tau\\
&\quad-\int_{t_0}^{t_1}\bigl\|\tfrac{\rm d\,}{{\rm d}\tau} \cEl_{11}{x_1}(\tau)\bigr\|^2_{Y}
+\bigl\|\cAl_{22}^{-1}u(\tau)-\cAl_{22}^*x_2(\tau)\bigr\|^2_{U}\,{\rm d}\tau\\
=\enskip & \!\!
\int_{t_0}^{t_1}\bigl\|\cAl_{22}^{-1}u(\tau)\bigr\|^2_{{U}} -\bigl\langle \cAl_{22}^*x_2(\tau),\cAl_{22}^{-1}u(\tau)\bigr\rangle_{U}-\bigl\|\cAl_{22}^*x_2(\tau)\bigr\|^2_{U} \,{\rm d}\tau\\
&\quad -\!\int_{t_0}^{t_1} \bigl\|\tfrac{\rm d\,}{{\rm d}\tau} \cEl_{11}{x_1}(\tau)\|^2_{Y}
+\|\cAl_{22}^{-1}u(\tau)\bigr\|^2_{U}
-\!2\bigl\langle\cAl_{22}^*x_2(\tau),\cAl_{22}^{-1}u(\tau)\bigr\rangle_{U}\,{\rm d}\tau\\
=\;&\!\!\int_{t_0}^{t_1}\bigl\langle x_2(\tau), u(\tau)\bigr\rangle_{U}\,{\rm d}\tau
-\int_{t_0}^{t_1}\bigl\|\cAl_{22}^*x_2(\tau)\bigr\|^2_{U}\, {\rm d}\tau
-\int_{t_0}^{t_1}\bigl\|\tfrac{\rm d\,}{{\rm d}\tau} \cEl_{11}{x_1}(\tau)\bigr\|^2_{Y}\, {\rm d}\tau.
\end{align*}}
Thus, \eqref{eq:DAEBarbsys4} is fulfilled.

If, further, $(z_{1,0},\mathcal{A}_{22}^{-1}z_{2,0})\in \mathcal{E} D(\varphi)$, then Theorem~\ref{lem:symmdae} yields \eqref{eq:DAEBarb3} and \eqref{eq:DAEBarb4}, whence \eqref{eq:subsdaeref} leads to
\eqref{eq:DAEBarbsys5}--\eqref{eq:DAEBarbsys8}. The identity \eqref{eq:DAEBarbsys4} for
all \mbox{$0\leq t_0\leq t_1\leq T$} can be concluded from the corresponding statement in Theorem~\ref{lem:symmdae} and the argumentation of the proof of \eqref{eq:DAEBarbsys4}.\vspace*{-1mm}
\end{proof}

\vspace*{-3mm}
\begin{remark}\ \vspace*{-2mm}
\begin{romanlist}[a)]
\item The additional features of the solution of system \eqref{eq:opdaegrad} in the case where \linebreak $(z_{1,0},\mathcal{A}_{22}^{-1}z_{2,0})\in \mathcal{E} D(\varphi)$ are guaranteed if and only if $(z_{1,0},\mathcal{A}_{22}^{-1}z_{2,0}) = \mathcal{E} x_{{0}}$ for some \mbox{$x_{0}\in D(\varphi)+(\ker\cEl_{11}\cap \ker\cEl_{21})$}.

\item Loosely speaking, Corollary~\ref{thm:infDAEsys} states that $\cEl_{11}x_1$ and $\cEl_{21}x_1$ are differentiable almost everywhere on the open interval $(0,\infty )$, and this for every choice of initial values in the closure of the range of $\mathcal{E}$. This regularization effect occurs in the theory of linear semigroups in the case of differentiable semigroups or, in particular, in the case of analytic semigroups. In other words, in the theory of linear abstract ordinary differential equations of type $\dot{x}(t) + \cAl x(t) = f(t)$, this phenomenon occurs, for example, if the operator $\cAl$ is densely defined and {\em sectorial} \cite[Chapter~II.4]{EngeNage00}. If system \eqref{eq:opdaegrad} is linear (equivalently, $\varphi$ is a~quadratic functional), then by a~careful inspection of the proofs of Lemma~\ref{lem:modfun} and Corollary~\ref{thm:infDAEsys}, it can be seen that the dynamics of \eqref{eq:opdaegrad} is governed by a~nonnegative and self-adjoint linear operator $\cAl$. Such an~operator is sectorial, whence the associated abstract ordinary differential equation has the aforementioned smoothing property.

\item Note that we have not proven the measurability of the solution $(x_1,x_2)$ of \eqref{eq:opdaegrad}, but only measurability of the functions $t\mapsto \tfrac{d}{dt}\cEl_{11} x_1(t)$, \mbox{$t\mapsto \tfrac{d}{dt}\cEl_{21} x_1(t)$}, $t\mapsto x_2(t)$ and, if additionally $\partial\varphi$ is a~function, $t\mapsto\partial\varphi(x(t))$. To prove measurability of $x_2$, some additional assumptions have to be imposed. As compositions of continuous and measurable functions are measurable, such an assumption guaranteeing measurability of $x_1$ can be, for instance, that the mapping $(\cEl_{11} x_1,\cEl_{21} x_1,\partial\varphi(x_1))\mapsto x_1$ is well-defined in some sense and moreover continuous. Such an argument is used in the forthcoming section, where we study the solvability of the coupled MQS system \eqref{eq:MQS}.
\end{romanlist}
\end{remark}

\section{Back to the Coupled MQS System: Existence and Regularity of Solutions}
\label{sec:MQSsolv}

Having developed the framework on abstract differential-algebraic gradient systems, we are now ready to prove the main result of this paper, namely the existence of solutions to the
coupled MQS system  \eqref{eq:MQS}. A~key ingredient is that, by Proposition~\ref{prop:enfun}~\ref{prop:enfun3}), the second summand in the equation~\eqref{eq:MQS1} of the coupled MQS is the subgradient of the magnetic energy $E$ as defined in \eqref{eq:varphiA}.

\begin{theorem}[Existence, uniqueness and regularity of solutions]\label{thm:solMQS}
Let \mbox{$\mathit{\Omega}\subset\mathbb{R}^3$} with subdomains $\mathit{\Omega}_C$ and $\mathit{\Omega}_I$ satisfy Assumption~\textup{\ref{ass:omega}}, and let $X(\mathit{\Omega},\mathit{\Omega}_C)$ and $X_0(\curl,\mathit{\Omega},\mathit{\Omega}_C)$ be defined as in \eqref{eq:statespace} and \eqref{eq:statespace2}, respectively. Further, let the initial and winding functions be as in Assumption~\textup{\ref{ass:init}} and the material parameters as in Assumption~\textup{\ref{ass:material}}.
Let $T>0$ be fixed and $\bm{v}\in L^2([0,T];\mathbb{R}^m)$.
Then the coupled MQS system \eqref{eq:MQS} admits a unique weak solution $(\bm{A},\bm{i})$ on $[0,T]$ in the sense of Definition~\textup{\ref{def:sol}}. This solution has the following properties:
\begin{align}
\left(t\mapsto t^{1/2}\tfrac{\rm d}{{\rm d}t}(\sigma\bm{A}(t))\right)\in&\, L^2([0,T];X(\mathit{\Omega},\mathit{\Omega}_C)),\label{eq:MQSDAEsys2}\\
\left(t\mapsto t^{1/2}\tfrac{\rm d}{{\rm d}t}\int_\mathit{\Omega} \chi^\top\bm{A}(t) \, {\rm d}\xi\right)\in&\, L^2([0,T];\R^m),\label{eq:MQSDAEsys3}\\
\left(t\mapsto t^{1/2}\;(\nabla\times\bm{A}(t))\right)\in&\, L^\infty([0,T];X(\mathit{\Omega},\mathit{\Omega}_C)),\label{eq:MQSDAEsys2.5}\\
\left(t\mapsto t^{1/2}\, \bm{i}(t)\right)\in&\, L^2([0,T];\R^m),\label{eq:MQSDAEsys3.5}\\
\left(t\mapsto t^{1/2}\, \nu(\cdot,\|\nabla \times \bm{A}(t)\|_2)
\nabla \times \bm{A}(t)\right)\in&\,L^2([0,T];H(\curl,\mathit{\Omega})).\label{eq:MQSDAEsys4}
\end{align}
For almost all $t\in[0,T]$,
\begin{align}
\tfrac{\partial}{\partial t}\left(\sigma\bm{A}(t)\right) + \nabla \times \left(\nu(\cdot,\|\nabla \times \bm{A}(t)\|_2)
\nabla \times \bm{A}(t)\right)  & =  \, \chi\, \bm{i}(t) ,  \label{eq:classsolMQS} \\
\tfrac{\rm d }{{\rm d} t}\int_\mathit{\Omega} \chi^\top\bm{A}(t)\, {\rm d}\xi + R\,\bm{i}(t) & =\, \bm{v}(t) .\label{eq:classsolMQS2}
\end{align}
If, moreover, $\bm{A}_0\in X_0(\curl,\mathit{\Omega},\mathit{\Omega}_C)$, then the solution fulfills
\begin{align}
\nabla\times\bm{A}\in&\, L^\infty([0,T];X(\mathit{\Omega},\mathit{\Omega}_C)),\label{eq:MQSDAEsys0.5s}\\
\sigma\bm{A}\in&\, H^1([0,T];X(\mathit{\Omega},\mathit{\Omega}_C)),\label{eq:MQSDAEsys2s}\\
\int_\mathit{\Omega} \chi^\top\bm{A} \, {\rm d}\xi\in&\, H^1([0,T];\R^m),\label{eq:MQSDAEsys3s}\\
\bm{i}\in&\, L^2([0,T];\R^m).\label{eq:MQSDAEsys3.5s}
\end{align}
\end{theorem}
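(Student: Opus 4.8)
The plan is to apply Corollary~\ref{thm:infDAEsys} with the concrete operators already identified for the coupled MQS system, verify its hypotheses using Proposition~\ref{prop:enfun}, and then translate the abstract conclusions back into the statements \eqref{eq:MQSDAEsys2}--\eqref{eq:MQSDAEsys3.5s}. Concretely, I would set $X=X(\mathit{\Omega},\mathit{\Omega}_C)$, $Y=X(\mathit{\Omega},\mathit{\Omega}_C)$, $U=\R^m$, $\varphi=E$ the magnetic energy from \eqref{eq:varphiA}, $\cEl_{11}=\cEl_{11}$ and $\cEl_{21}=\cEl_{21}$ as in \eqref{eq:E11}--\eqref{eq:E21}, and $\cAl_{22}=R^{1/2}$, which has a bounded inverse by Assumption~\ref{ass:material}~c). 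With these identifications the operator $\mathcal{E}$ from \eqref{eq:Edef} coincides exactly with the operator in \eqref{eq:operatorE}, for which Proposition~\ref{prop:enfun}~\ref{prop:enfun5}) established $\mathcal{E}$-ellipticity; density, convexity and lower semicontinuity of $E$ come from Proposition~\ref{prop:enfun}~\ref{prop:enfun1}). The input is $u=\bm{v}\in L^2([0,T];\R^m)$, and the initial data is $(z_{1,0},z_{2,0})=(\sqrt{\sigma}\bm{A}_0,\int_\mathit{\Omega}\chi^\top\bm{A}_0\,\mathrm{d}\xi)=\mathcal{E}\bm{A}_0$ (after multiplying the second component by $R^{1/2}$), which lies in the range of $\mathcal{E}$ by Assumption~\ref{ass:init}~a).

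Next I would check that the abstract system \eqref{eq:opdaegrad} really is the weak MQS system \eqref{eq:weak}. Writing out \eqref{eq:opdaegrad} with the above operators: the first inclusion reads $\cEl_{21}^*\bm{i}(t)-\cEl_{11}^*\tfrac{\mathrm d}{\mathrm dt}\cEl_{11}\bm{A}(t)\in\partial E(\bm{A}(t))$; since $\cEl_{11}^*\cEl_{11}=\sigma_C\mathbbm 1_{\mathit{\Omega}_C}\cdot=\sigma\cdot$ and $\cEl_{21}^*=\cAl_{12}$ is multiplication-by-$\chi$ (as noted in the proof of Theorem~\ref{thm:uniqueness}), the description of $\partial E$ in Proposition~\ref{prop:enfun}~\ref{prop:enfun3}) shows this is precisely the first equation of \eqref{eq:weak} tested against $\bm{F}\in X_0(\curl,\mathit{\Omega},\mathit{\Omega}_C)$. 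The second equation of \eqref{eq:opdaegrad}, $R^{1/2}R^{1/2}\bm{i}(t)+\tfrac{\mathrm d}{\mathrm dt}\cEl_{21}\bm{A}(t)=\bm{v}(t)$, is exactly the second equation of \eqref{eq:weak}. Hence Corollary~\ref{thm:infDAEsys} yields a solution $(\bm{A},\bm{i})$ satisfying \ref{item:solsysa1})--\ref{item:solsysc}), which give items \ref{item:sol1})--\ref{item:sol7}) of Definition~\ref{def:sol} (here $\cEl_{11}\bm{A}=\sqrt{\sigma}\bm{A}$ being in $C\cap H^1_{\rm loc}$ is equivalent to $\sigma\bm{A}$ having the same regularity, since $\sigma=\sqrt{\sigma_C}\cdot\sqrt{\sigma}$ pointwise; one also uses that $\bm{A}(t)\in D(\partial E)=D(\partial E)\subseteq X_0(\curl,\ldots)$ forces $\bm{A}\in L^2([0,T];X_0(\curl,\ldots))$, using the $\varphi\in L^1$ bound \eqref{eq:DAEBarbsys1} together with the energy estimate \eqref{eq:estE} and the Poincaré-type inequality \eqref{eq:curlest}). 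Uniqueness is already Theorem~\ref{thm:uniqueness}.

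Then the regularity statements follow by unwinding the abstract ones. Relations \eqref{eq:DAEBarbsys2}, \eqref{eq:DAEBarbsys3}, \eqref{eq:DAEBarbsys3.5} of Corollary~\ref{thm:infDAEsys} become \eqref{eq:MQSDAEsys2}, \eqref{eq:MQSDAEsys3}, \eqref{eq:MQSDAEsys3.5} verbatim. The bound \eqref{eq:DAEBarbsys1.5}, $t\mapsto t\,E(\bm{A}(t))\in L^\infty$, combined with the lower bound $\tfrac{m_\nu}{2}\|\nabla\times\bm{A}(t)\|_{L^2}^2\le E(\bm{A}(t))$ from \eqref{eq:estE}, gives $t\mapsto t^{1/2}(\nabla\times\bm{A}(t))\in L^\infty([0,T];L^2(\mathit{\Omega};\R^3))$, hence \eqref{eq:MQSDAEsys2.5} (using $\nabla\times\bm{A}(t)\in X(\mathit{\Omega},\mathit{\Omega}_C)$). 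For \eqref{eq:MQSDAEsys4}, I would observe that, from the first equation in \eqref{eq:classsolMQS} (which holds a.e.\ in the sense of $X_0(\curl,\ldots)'$), $\nabla\times(\nu(\cdot,\|\nabla\times\bm{A}\|_2)\nabla\times\bm{A})=\chi\bm{i}-\tfrac{\partial}{\partial t}(\sigma\bm{A})$; multiplying by $t^{1/2}$, the right-hand side is in $L^2([0,T];X(\mathit{\Omega},\mathit{\Omega}_C))$ by \eqref{eq:MQSDAEsys2} and \eqref{eq:MQSDAEsys3.5}, and the Lipschitz bound \eqref{eq:Mnugreater} plus \eqref{eq:MQSDAEsys2.5} show $t^{1/2}\nu(\cdot,\|\nabla\times\bm{A}\|_2)\nabla\times\bm{A}\in L^\infty([0,T];L^2)$, so both it and its curl are in $L^2([0,T];L^2)$, i.e.\ \eqref{eq:MQSDAEsys4}. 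The pointwise equations \eqref{eq:classsolMQS}--\eqref{eq:classsolMQS2} are just \ref{item:solsysc}) of the corollary written in PDE form. Finally, under the extra hypothesis $\bm{A}_0\in X_0(\curl,\mathit{\Omega},\mathit{\Omega}_C)=D(E)$, one has $\mathcal{E}\bm{A}_0\in\mathcal{E}D(E)=\mathcal{E}D(\varphi)$, so \eqref{eq:DAEBarbsys5}--\eqref{eq:DAEBarbsys8} apply and translate directly into \eqref{eq:MQSDAEsys0.5s}--\eqref{eq:MQSDAEsys3.5s} (for \eqref{eq:MQSDAEsys0.5s} one again uses $\varphi(\bm{A}(\cdot))\in W^{1,1}\subset L^\infty$ together with \eqref{eq:estE}).

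**The main obstacle** I anticipate is not any single deep step but rather the careful bookkeeping needed to match the abstract operators to the MQS data — in particular, verifying that $\cEl_{11}^*\cEl_{11}$ produces the multiplication operator $\sigma\,\cdot$ and that the differential inclusion against test functions $\bm{F}\in X_0(\curl,\ldots)$ genuinely reproduces \eqref{eq:weak} via the explicit form of $\partial E$ — and the subsequent checking that the abstract regularity (stated in terms of $\cEl_{11}x_1=\sqrt{\sigma}\bm{A}$ and $\cEl_{21}x_1=\int_\mathit{\Omega}\chi^\top\bm{A}$) really yields the claimed regularity of $\sigma\bm{A}$ and of $\int_\mathit{\Omega}\chi^\top\bm{A}$. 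The only genuinely new estimate is the argument for \eqref{eq:MQSDAEsys4}, where one must exploit the MQS equation itself to gain $H(\curl,\mathit{\Omega})$-regularity of the flux $\nu(\cdot,\|\nabla\times\bm{A}\|_2)\nabla\times\bm{A}$ from the already-established $L^2$-in-time control of $\tfrac{\partial}{\partial t}(\sigma\bm{A})$ and $\chi\bm{i}$.
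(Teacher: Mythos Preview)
Your approach is essentially the same as the paper's: set up the abstract framework of Corollary~\ref{thm:infDAEsys} with $\varphi=E$, verify its hypotheses via Proposition~\ref{prop:enfun}, and then translate the abstract conclusions back. The translation of \eqref{eq:DAEBarbsys2}--\eqref{eq:DAEBarbsys3.5} and \eqref{eq:DAEBarbsys5}--\eqref{eq:DAEBarbsys8}, and your argument for \eqref{eq:MQSDAEsys4} and \eqref{eq:MQSDAEsys2.5} via \eqref{eq:estE} and the equation itself, match the paper almost verbatim.

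There is one point you gloss over that the paper handles with some care: showing $\bm{A}\in L^2([0,T];X_0(\curl,\mathit{\Omega},\mathit{\Omega}_C))$. You write that ``$\bm{A}(t)\in D(\partial E)\subseteq X_0(\curl,\ldots)$ forces $\bm{A}\in L^2([0,T];X_0(\curl,\ldots))$'' by combining the $L^1$-bound on $E(\bm{A}(\cdot))$, \eqref{eq:estE} and \eqref{eq:curlest}. That gives the correct norm bound, but not \emph{measurability} of $t\mapsto\bm{A}(t)$ as an $X_0(\curl,\mathit{\Omega},\mathit{\Omega}_C)$-valued map: Corollary~\ref{thm:infDAEsys} only delivers measurability of $\cEl_{11}x_1$, $\cEl_{21}x_1$ and $x_2$, not of $x_1$ itself (this is precisely the point of the remark following that corollary). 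The paper closes this gap by proving that the nonlinear map $\bm{A}\mapsto(\sigma\bm{A},\cAl_{11}(\bm{A}))$ from $X_0(\curl,\mathit{\Omega},\mathit{\Omega}_C)$ into $X(\mathit{\Omega},\mathit{\Omega}_C)\times X_0(\curl,\mathit{\Omega},\mathit{\Omega}_C)'$ has a Lipschitz inverse on its range, combining \eqref{eq:curlest} with the monotonicity estimate \eqref{eq:A11diss}; since $\sigma\bm{A}$ is continuous and $\cAl_{11}(\bm{A})=\partial E(\bm{A})=\chi\bm{i}-\tfrac{\partial}{\partial t}(\sigma\bm{A})$ lies in $L^2_{\rm loc}((0,T];X(\mathit{\Omega},\mathit{\Omega}_C))$, this yields $\bm{A}\in L^2_{\rm loc}((0,T];X_0(\curl,\mathit{\Omega},\mathit{\Omega}_C))$ as a composition of a Lipschitz map with measurable functions, after which your $L^1$-energy argument upgrades this to $L^2$ on $[0,T]$. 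You should insert this step.
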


\begin{proof}
Step 1: First, we verify that, by taking the spaces $X=Y=X(\mathit{\Omega},\mathit{\Omega}_C)$ and $U=\R^m$, the operators $\cEl_{11} : X(\mathit{\Omega},\mathit{\Omega}_C) \to X(\mathit{\Omega},\mathit{\Omega}_C)$, \mbox{$\cEl_{12} : X(\mathit{\Omega},\mathit{\Omega}_C) \to \R^m$} and $\cAl_{22} : \R^m \to\R^m$ defined in \eqref{eq:EAiiop}
and the functional $\varphi= E$ with the magnetic energy~$E$ as in \eqref{eq:varphiA} fulfill the assumptions of Corollary~\ref{thm:infDAEsys}.
It follows from Proposition~\ref{prop:enfun} that $ E:X\to\mathbb{R}_{\ge0}\cup\{\infty\}$ is densely defined, convex, lower semicontinuous and $\mathcal{E}$-elliptic for  $\mathcal{E}\in\mathcal{L}(X(\mathit{\Omega},\mathit{\Omega}_C),X(\mathit{\Omega},\mathit{\Omega}_C)\times \mathbb{R}^m)$  as in \eqref{eq:Edef}.

Step~2: By using the representation of $\partial E$ from Proposition~\ref{prop:enfun}~\ref{prop:enfun3}), we conclude from Corollary~\ref{thm:infDAEsys} that there exists a function $(\bm{A},\bm{i}):[0,T]\to X(\mathit{\Omega},\mathit{\Omega}_C)\times\R^m$ with the following properties, by respectively referring to \ref{item:solsysa1})-\ref{item:solsysc}) in Corollary~\ref{thm:infDAEsys}:\vspace*{-1mm}
\begin{romanlist}[a)]
\item\label{item:solproof1}
$\sigma \bm{A} \in C([0,T] ; X(\mathit{\Omega},\mathit{\Omega}_C)) \cap H^1_{\rm loc} ((0,T] ; X(\mathit{\Omega},\mathit{\Omega}_C))$ and $\sigma \bm{A}(0)=\sigma \bm{A}_0$;
\item\label{item:solproof1a}
$\int_\mathit{\Omega} \chi^\top\bm{A} \, {\rm d}\xi \in C ([0,T]; \R^m) \cap H^1_{\rm loc} ((0,T] ; \R^m )$ and $\int_\mathit{\Omega} \chi^\top\bm{A}(0) \, {\rm d}\xi=\int_\mathit{\Omega} \chi^\top\bm{A}_0 \, {\rm d}\xi$;
\item\label{item:solproof5} $\bm{i}\in L^{2}_{\rm loc}((0,T];\R^m)$;
\item\label{item:solproof6} $\nu(\cdot,\|\nabla \times \bm{A}(t)\|_2)
\nabla \times \bm{A}(t)\in H(\curl,\mathit{\Omega})$,
and the equations \eqref{eq:classsolMQS} and \eqref{eq:classsolMQS2} hold for almost all $t\in[0,T]$.
\end{romanlist}

\vspace*{-2mm}
Step~3: We show that $(\bm{A},\bm{i})$ is a weak solution of the coupled MQS system \eqref{eq:MQS} in the sense of Definition~\ref{def:sol}. By using the results from Step~2, it remains to prove that\vspace*{-2mm}
\begin{romanlist}[(i)]
\item $\bm{A}\in L^2([0,T],X_0(\curl,\mathit{\Omega},\mathit{\Omega}_C))$, and
\item for all $\bm{F}\in X_0(\curl,\mathit{\Omega},\mathit{\Omega}_C)$, equations \eqref{eq:weak} are fulfilled for almost all $t\in[0,T]$.
\end{romanlist}
Statement (ii) is a~simple consequence of \ref{item:solproof6}), that is, equations \eqref{eq:classsolMQS} and \eqref{eq:classsolMQS2}, and the integration by parts formula \eqref{eq:curladj}.

In order to prove (i), first note that by the properties \ref{item:solproof1}) and \ref{item:solproof5}) above,
\[
 \tfrac{{\rm d}}{{\rm d} t}\left(\sigma\bm{A}\right) , \, \chi\, \bm{i} \in L^2_{\rm loc} ((0,T] ; X(\mathit{\Omega},\mathit{\Omega}_C) ) .
\]
Hence, by property \ref{item:solproof6}) (more precisely, by equation \eqref{eq:classsolMQS}),
\begin{equation} \label{eq:new1}
 \partial E(\bm{A} ) = \nabla \times \left(\nu(\cdot,\|\nabla \times \bm{A}\|_2)
\nabla \times \bm{A}\right) \in L^2_{\rm loc} ((0,T] ; X(\mathit{\Omega},\mathit{\Omega}_C) ) .
\end{equation}
Second, let the operator $\cAl_{11}$ be defined as in \eqref{eq:A11}. Note that $\cAl_{11} = \partial E$ on $D(\partial E)$. By the estimates \eqref{eq:curlest} and \eqref{eq:A11diss} from Lemmas \ref{lem:denscoerc} and \ref{lem:A11mon}, respectively, for all \mbox{$\bm{A}_1$, $\bm{A}_2\in X_0(\curl,\mathit{\Omega},\mathit{\Omega}_C)$},
\begin{align*}
 \| \bm{A}_1 - \bm{A}_2&\|_{H(\curl , \mathit{\Omega})}^2 = \,\| \bm{A}_1 - \bm{A}_2\|_{L^2 (\mathit{\Omega} ;\R^3)}^2 + \| \nabla\times (\bm{A}_1 - \bm{A}_2)\|_{L^2 (\mathit{\Omega} ;\R^3)}^2 \\
 &\leq\, \tfrac{L_C}{\sigma_C^2} \, \| \sigma (\bm{A}_1 - \bm{A}_2)\|_{L^2 (\mathit{\Omega};\R^3)}^2 + (L_C +1) \| \nabla\times (\bm{A}_1 - \bm{A}_2\|_{L^2 (\mathit{\Omega};\R^3)}^2 \\
 &\leq \,\tfrac{L_C}{\sigma_C^2} \, \| \sigma (\bm{A}_1 -\bm{A}_2)\|_{L^2 (\mathit{\Omega};\R^3)}^2 + \tfrac{L_C+1}{m_\nu} \, \langle \bm{A}_1 - \bm{A}_2 , \cAl_{11}(\bm{A}_1) - \cAl_{11}(\bm{A}_2)\rangle \\
 &\leq \,\tfrac{L_C}{\sigma_C^2} \, \| \sigma (\bm{A}_1 - \bm{A}_2)\|_{L^2 (\mathit{\Omega};\R^3)}^2 \\
  &\,\phantom{\leq + } + \tfrac{L_C+1}{m_\nu} \, \| \bm{A}_1 - \bm{A}_2 \|_{H(\curl , \mathit{\Omega})} \| \cAl_{11}(\bm{A}_1) - \cAl_{11}(\bm{A}_2) \|_{X_0(\curl,\mathit{\Omega},\mathit{\Omega}_C)'}.
\end{align*}
This inequality combined with Young's inequality implies that there is a constant $C\geq 0$ such that, for all $\bm{A}_1$, $\bm{A}_2\in X_0(\curl,\mathit{\Omega},\mathit{\Omega}_C)$,
\begin{equation}
 \| \bm{A}_1 - \bm{A}_2\|_{H(\curl , \mathit{\Omega})}^2 \leq C \bigl(\| \sigma(\bm{A}_1 -\bm{A}_2)\|_{L^2 (\mathit{\Omega};\R^3)}^2 + \| \cAl_{11}(\bm{A}_1) -\cAl_{11}(\bm{A}_2) \|_{X_0(\curl,\mathit{\Omega},\mathit{\Omega}_C)'}^2 \bigr).
\end{equation}
In other words, the mapping
$$
\begin{array}{rcl}
 X_0(\curl,\mathit{\Omega},\mathit{\Omega}_C) & \to &X(\mathit{\Omega},\mathit{\Omega}_C) \times X_0(\curl,\mathit{\Omega},\mathit{\Omega}_C)' , \\
 \bm{A} & \mapsto& (\sigma \bm{A} , \cAl_{11}(\bm{A}) )
\end{array}
$$
has a Lipschitz continuous inverse defined on the range of the above mapping.  Thus, the continuity of $\sigma\bm{A}$ with values in $X(\mathit{\Omega},\mathit{\Omega}_C)$ and \eqref{eq:new1} imply
\[
 \bm{A} \in L^2_{\rm loc} ((0,T] ; X_0(\curl,\mathit{\Omega},\mathit{\Omega}_C)) .
\]
However, since the mapping $t\mapsto  E(\bm{A}(t))$ is integrable on $[0,T]$ by Corollary~\ref{thm:infDAEsys}, and by the estimate \eqref{eq:estE}, we have $\nabla \times \bm{A}\in L^2([0,T];X(\mathit{\Omega},\mathit{\Omega}_C))$. Thus, the continuity of~$\sigma\bm{A}$ with values in $X(\mathit{\Omega},\mathit{\Omega}_C)$ and Lemma \ref{lem:denscoerc} actually imply the stronger statement
\[
 \bm{A} \in L^2 ([0,T] ; X_0(\curl,\mathit{\Omega},\mathit{\Omega}_C)) ,
\]
which is property (i) above.

Step~4: Next, we prove \eqref{eq:MQSDAEsys2}-\eqref{eq:MQSDAEsys4}.
By using Step~1, \eqref{eq:MQSDAEsys2}, \eqref{eq:MQSDAEsys3} and \eqref{eq:MQSDAEsys3.5} are, respectively, consequences of \eqref{eq:DAEBarbsys2}, \eqref{eq:DAEBarbsys3} and \eqref{eq:DAEBarbsys3.5} in Corollary~\ref{thm:infDAEsys}. Further, by invoking \eqref{eq:estE} again, we see that \eqref{eq:DAEBarbsys1.5} implies \eqref{eq:MQSDAEsys2.5}. The remaining relation \eqref{eq:MQSDAEsys4} can be verified as follows.
Using \eqref{eq:Mnugreater}, we obtain that $\nu(\cdot,\|\nabla \times \bm{A}\|_2)$ is essentially bounded. This together with \eqref{eq:MQSDAEsys2.5} yields that $\left(t\mapsto t^{1/2}\, \nu(\cdot,\|\nabla \times \bm{A}(t)\|_2)
\nabla \times \bm{A}(t)\right)\in L^2([0,T];L^2(\mathit{\Omega};\mathbb{R}^3))$. Since, moreover, by \eqref{eq:classsolMQS}, we have
\[
\nabla \times \left(\nu(\cdot,\|\nabla \times \bm{A}(t)\|_2)
\nabla \times \bm{A}(t)\right)  =  \, -\tfrac{\rm d}{{\rm d} t}\left(\sigma\bm{A}(t)\right)+\chi \bm{i}(t),
\]
the relations \eqref{eq:MQSDAEsys2} and \eqref{eq:MQSDAEsys3.5} lead to
\[\left(t\mapsto t^{1/2}\, \nabla \times \nu(\cdot,\|\nabla \times \bm{A}(t)\|_2)
\nabla \times \bm{A}(t)\right)\in L^2([0,T];L^2(\mathit{\Omega};\mathbb{R}^3)),\] whence \eqref{eq:MQSDAEsys4} holds.

Step~5: Finally, we show that \eqref{eq:MQSDAEsys0.5s}-\eqref{eq:MQSDAEsys3.5s} hold, if the initial value additionally fulfills $\bm{A}_0\in X_0(\curl,\mathit{\Omega},\mathit{\Omega}_C) = D(E)$.
The statements \eqref{eq:MQSDAEsys2s}-\eqref{eq:MQSDAEsys3.5s} can be proven analogously to the results in Step~4 by invoking \eqref{eq:DAEBarbsys6}-\eqref{eq:DAEBarbsys8} in Corollary~\ref{thm:infDAEsys}. To prove \eqref{eq:MQSDAEsys0.5s}, we first make use of
Corollary~\ref{thm:infDAEsys} which implies, via \eqref{eq:DAEBarbsys5}, that $\left(t\mapsto  E(\bm{A}(t))\right)\in W^{1,1}([0,T])$ and, hence, {$\left(t\mapsto  E(\bm{A}(t))\right)\in L^{\infty}([0,T])$.} This, together with \eqref{eq:estE}, yields that $\|\nabla\times\bm{A}(t)\|_{L^2(\mathit{\Omega};\mathbb{R}^3)}$ is essentially bounded.
\end{proof}

\begin{remark}
As mentioned in the introduction, linear coupled MQS systems with~$\nu$ being independent of $\bm{A}$ have been studied in \textup{\cite{NicT14}}, where it has additionally been assumed that the non-conducting subdomain $\Omega_I$ is connected. In the language of Assumption~\ref{ass:omega}, this means that $q=0$ and $\mathit{\Omega}_I=\mathit{\Omega}_{I,{\rm ext}}$. Further, it has been seeked for solutions in which the magnetic vector potential evolves in the space
\[
Y(\mathit{\Omega})=\bigl\{\bm{F}\in H_0(\curl,\mathit{\Omega})\cap L_2(\divg\!=\!0,\mathit{\Omega}_C\cup\mathit{\Omega}_I;\R^3) \;:\;  \langle \left.\bm{F}\right|_{\Omega_I}\cdot\bm{n},1\rangle_{L^2(\Gamma_{{\rm ext}})}=0\bigr\}.
\]
Hereby, \mbox{$\left.\bm{F}\right|_{\Omega_I}\!\cdot\bm{n}$} stands for the normal boundary trace of the restriction of $\bm{F}$ to the non-conducting domain (this normal boundary trace is well-defined by \textup{\cite[Theo\-rem~I.2.5]{GiraRavi86}} and the fact that $\bm{F}$ has a~weak divergence $\nabla\cdot \bm{F}\in L^2(\Omega)$). Note that the space $Y(\mathit{\Omega})$ coincides with our space $X_0(\curl,\mathit{\Omega},\mathit{\Omega}_C)$ under the assumption in \textup{\cite{NicT14}} that the non-conducting subdomain is connected. To see that $Y(\mathit{\Omega})\subset X_0(\curl,\mathit{\Omega},\mathit{\Omega}_C)$, let $\psi\in H^1_0(\Omega)$ with $\left.\psi\right|_{\Gamma_{{\rm ext}}}=c_{{\rm ext}}$ for some $c_{{\rm ext}}\in\R$. Then, by using integration by parts with the weak divergence, we obtain for all $\bm{F}\in Y(\mathit{\Omega})$ that
\[\begin{aligned}
\langle\nabla\psi,\bm{F}\rangle_{L^2(\Omega;\R^3)}
&=-\langle\psi,\underbrace{\nabla\cdot\bm{F}}_{=0}\rangle_{L^2(\Omega)}\!+\!
\langle\left.\bm{F}\right|_{\Omega_I}\!\cdot\bm{n},\underbrace{\!\psi\!}_{=c_{{\rm ext}}}\rangle_{L^2(\Gamma_{{\rm ext}})}\!+\!
\langle\left.\bm{F}\right|_{\Omega_I}\!\cdot\bm{n},\underbrace{\!\psi\!}_{=0}\rangle_{L^2(\partial\Omega)}\\
&=c_{{\rm ext}}\langle\left.\bm{F}\right|_{\Omega_I}\cdot\bm{n},1\rangle_{L^2(\Gamma_{{\rm ext}})}=0.
\end{aligned}\]
Hence, $\bm{F}\in X(\mathit{\Omega},\mathit{\Omega}_C)$. Further, $Y(\mathit{\Omega})\subset H_0(\curl,\mathit{\Omega})$ implies
\mbox{$\bm{F}\in X_0(\curl,\mathit{\Omega},\mathit{\Omega}_C)$}.
On the other hand, if $\bm{F}\in X_0(\curl,\mathit{\Omega},\mathit{\Omega}_C)$, then $\bm{F}\in L^2(\divg\!=\!0, \mathit{\Omega}_C \cup \mathit{\Omega}_I ;\R^3)$ by \eqref{eq:divfree}. To prove that $\bm{F}\in Y(\mathit{\Omega})$, it remains to show that the integral of the normal trace of $\left.\bm{F}\right|_{\Omega_I}$ over $\Gamma_{{\rm ext}}$ vanishes. To see this, let $\psi\in H^1_0(\Omega)$ be  such that $\left.\psi\right|_{\Omega_C}\equiv1$ (which exists by $\overline{\mathit{\Omega}}_{C}\subseteq {\mathit{\Omega}}$). Then $\nabla\psi\in G(\Omega,\Omega_C)$, and, by further invoking that $\nabla\psi$ vanishes on $\Omega_C$, we obtain
\[\begin{aligned}
0=&\,\langle\nabla\psi,\bm{F}\rangle_{L^2(\Omega;\R^3)}=\,\langle\nabla\psi,\bm{F}\rangle_{L^2(\Omega_I;\R^3)}\\
=&\,-\langle\psi,\underbrace{\nabla\cdot\bm{F}}_{=0}\rangle_{L^2(\Omega_I)}+
\langle\left.\bm{F}\right|_{\Omega_I}\cdot\bm{n},\underbrace{\psi}_{=1}\rangle_{L^2(\Gamma_{{\rm ext}})}+
\langle\left.\bm{F}\right|_{\Omega_I}\cdot\bm{n},\underbrace{\psi}_{=0}\rangle_{L^2(\partial\Omega)}\\
=&\,\langle\left.\bm{F}\right|_{\Omega_I}\cdot\bm{n},1\rangle_{L^2(\Gamma_{{\rm ext}})}.
\end{aligned}\]
Existence of a solution $\bm{A}\in L^2([0,T];Y(\mathit{\Omega}))$ with $\sigma\bm{A}\in W^{1,1}([0,T];Y(\mathit{\Omega})')$ is shown in
\textup{\cite[Corollary~3.13]{NicT14}}.
 For the case where the voltage and initial value additionally fulfill $\bm{v}\in H^1([0,T];\R^m)$ and $\bm{A}_0\in H_0(\curl,\Omega)$ with $\nu \, \nabla\times \bm{A}_0\in H(\curl,\mathit{\Omega})$, it is proven in \textup{\cite[Theorem~3.11]{NicT14}} that $\bm{A}\in H^1([0,T];H_0(\curl,\mathit{\Omega}))$.
\end{remark}

\section{Conclusion}
We have considered a~quasilinear MQS approximation of Maxwell's equations, which is furthermore coupled with
an~integral equation. By employing the magnetic energy, this system can be reformulated as an abstract differential-algebraic equation involving a~subgradient. For this class of equations, we have developed novel well-posedness and regularity results which we have then applied to the coupled MQS system.

%

%

\end{document}